\documentclass[a4]{amsart}
\usepackage{amsmath,amssymb,amsthm,bm,mathrsfs}
\usepackage{subfigure}
\usepackage[dvipdfmx]{graphicx}
\usepackage[monochrome]{color}
\usepackage{tikz,xy}
\xyoption{all}

\bmdefine{\ba}{a}
\bmdefine{\be}{e}
\bmdefine{\bg}{g}
\bmdefine{\bk}{k}
\bmdefine{\bm}{m}
\bmdefine{\bp}{p}
\bmdefine{\bs}{s}
\bmdefine{\bt}{t}
\bmdefine{\bv}{v}
\bmdefine{\bw}{w}
\bmdefine{\by}{y}
\bmdefine{\bz}{z}
\bmdefine{\i}{i}
\bmdefine{\j}{j}

\DeclareMathOperator{\supp}{supp}
\DeclareMathOperator{\Int}{int}
\DeclareMathOperator{\vol}{vol}

\DeclareMathOperator{\card}{card}
\DeclareMathOperator{\SP}{SP}
\DeclareMathOperator{\SR}{SR}

\newcommand{\sci}{\wedge}
\newcommand{\WP}{\mathcal{W}\mathcal{P}}

\newcommand{\R}{{\mathbb R}}

\newcommand{\Z}{{\mathbb Z}}

\newcommand{\N}{{\mathbb N}}

\newcommand{\T}{{\mathcal T}}

\newtheorem{theorem}{Theorem}[section]
\newtheorem{rem}{Remark}[section]
\newtheorem{lem}[theorem]{Lemma}
\newtheorem{cor}[theorem]{Corollary}
\newtheorem{prop}[theorem]{Proposition}
\theoremstyle{definition}
\newtheorem{defin}{Definition}
\newtheorem{ex}[theorem]{Example}
\makeatletter
  
  \@addtoreset{equation}{section}
\makeatother

\begin{document}
\title{Overlapping substitutions and tilings}
\author{Shigeki Akiyama, Yasushi Nagai, Shu-Qin Zhang*}
\address[S.~Akiyama]{Institute of Mathematics, University of Tsukuba, 1-1-1 Tennodai, Tsukuba, Ibaraki, zip:305-8571, Japan}
\email{akiyama@math.tsukuba.ac.jp}
\address[Y.~Nagai]{Center for General Education, Shinshu University, 3-1-1 Asahi, Matsumoto, Nagano, zip:390-8621, Japan}
\email{ynagai@shinshu-u.ac.jp}
\address[S.-Q.~Zhang]{School of Mathematics and Statistics, Zhengzhou University, 100 Science Avenue, Zhengzhou, Henan 45001, People's Republic of China}
\email{sqzhang@zzu.edu.cn}
\thanks{* The corresponding author}

\begin{abstract}
We generalize the notion of (geometric) substitution rule to obtain overlapping substitutions. 
Our motivating example is the substitution presented in \cite{DBZ}, which features a substitution matrix with non-integer entries.
We give the meaning of such a matrix by showing that the right Perron--Frobenius eigenvector encodes the patch frequency of the resulting tiling. The patch frequencies are shown to be uniformly convergent, implying that the corresponding dynamical system is uniquely ergodic. Under mild assumptions, we further prove that the associated expansion constant is always an algebraic integer.
    In general, overlapping substitutions may yield a patch with illegal (partial) overlaps of tiles, even if it is locally consistent.
    We provide a sufficient condition for an overlapping substitution to be consistent, ensuring that no such illegal tiles emerge.  Finally, we construct many intriguing one-dimensional overlapping substitutions and present higher dimensional examples from Delone multi-sets with inflation symmetry.
\end{abstract}

\maketitle




\section{Introduction}
The discovery of quasicrystals by Shechtman \cite{Shechtman:84} forces researchers to study the new stable phase of materials. Bragg peaks observed in the diffraction pattern of a quasicrystal show its internal long-range order, but they may have impossible symmetry with respect to the crystallographic restriction.
Along this line, its mathematical foundation has also been developed and coined as ``Mathematics of Aperiodic Order", see \cite{Baake-Grimm:13}. 
Quasicrystals are modeled by mathematical objects such as tilings and Delone sets, and the diffraction patterns are modeled by Fourier transforms for Radon measures and distributions.

Another context is the theory
of dynamical systems and ergodic theory. Words (sequences) and tilings offer
examples of ergodic transformations and group actions (\cite{Ledrappier:92},\cite{Queffelec:87}, etc.). General dynamical systems often have self-inducing structures, and words
and tilings give simple models
with such structures.
The dynamical spectra vary
for examples and various
peculiar examples have
been constructed by considering
words and tilings.

The self-inducing structure
for words and tilings is a useful foothold for the study
of their diffraction and dynamical spectra.
On the diffraction side,
it gives us the renormalization equations (\cite{Manibo}), from
which spectra are studied. On the dynamical side, the coincidence condition, discovered by Keane(\cite{Keane:71, Sirvent-Solomyak:02}), offers a sufficient condition for the spectra to be pure point.

The self-inducing structure is a kind of symmetry that is not described by groups. In the narrowest sense, it is described by the word substitution (the symbolic side) and the stone inflation (the geometric side). The latter is the so-called `inflation-and-subdivision rule'' which
is given by equations of the form
\begin{equation}
\label{SSE}
\beta A_j = \bigcup_{i=1}^{\kappa}(A_i+D_{ij})=\bigcup_{i=1}^{\kappa} \bigcup_{d\in D_{ij}} (A_i+d),
\end{equation}
where $A_i$'s are tiles and $D_{ij}$'s are finite subsets, called digits,  of the ambient Euclidean space.
We know that several classical examples of self-inducing structures are not in this realm. For example, Penrose
substitution in Figure \ref{PenroseSubst} and Ammann-Beenker
substitution in Figure \ref{AmmannSubst} have ``protruding'' or ``overlapping'' and do not
satisfy \eqref{SSE}.
The main target of this paper is to understand such a structure.
{\color{blue} The Penrose substitution and
the Ammann-Beenker substitution could be reduced
to stone inflations
by taking suitable sub-tiles }or considering its limit shapes having fractal boundaries, and in general,
by \cite{Solomyak_pseudo-self-similar},
{\color{blue}the induced structures that appear in this paper are always reduced to one described by stone inflations.
However, the reduction procedure is not always simple}: the associated set equation (\ref{SSE}) may become involved; the number of tiles generally increases; and the tiles often have fractal boundaries or are disconnected. It is advantageous to be able to handle the original substitution, which often has polygonal tiles.

\begin{figure}
\centering
\includegraphics[width= 4.5cm]{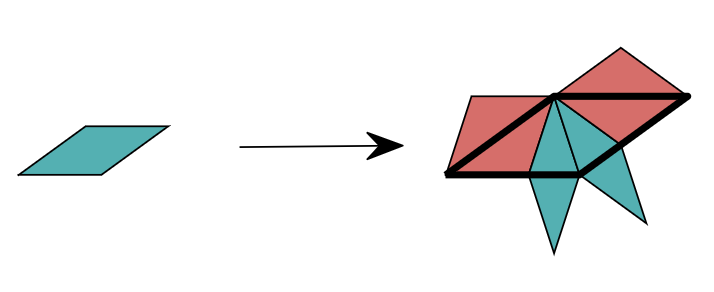}\quad\quad\includegraphics[width= 4.5cm]{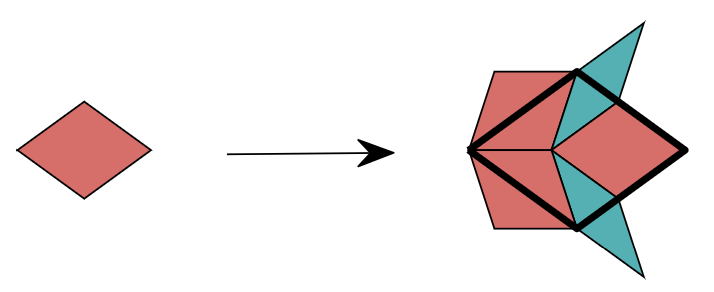}
\caption{Penrose tiling substitution\label{PenroseSubst}}
\end{figure}

\begin{figure}
\centering
\includegraphics[width= 4.5 cm]{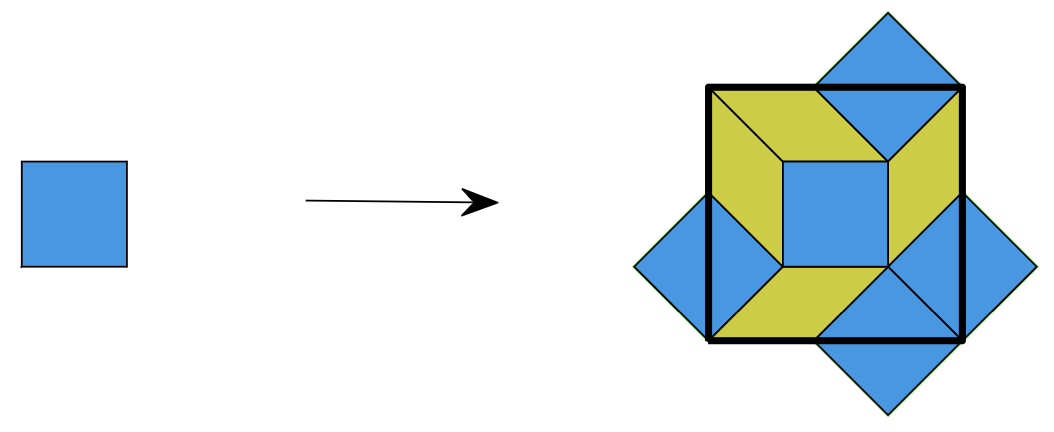}\quad\quad\includegraphics[width= 4.5 cm]{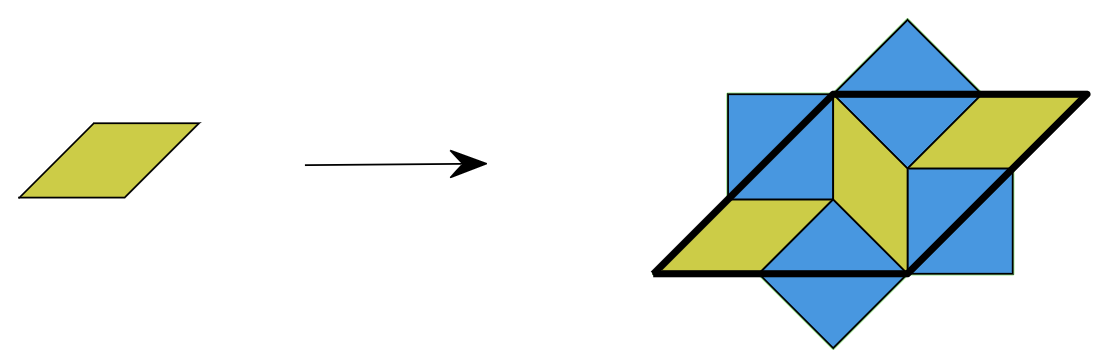}
\caption{Ammann-Beenker tiling substitution\label{AmmannSubst}}
\end{figure}

\begin{figure}
\centering
\includegraphics[width=8cm]{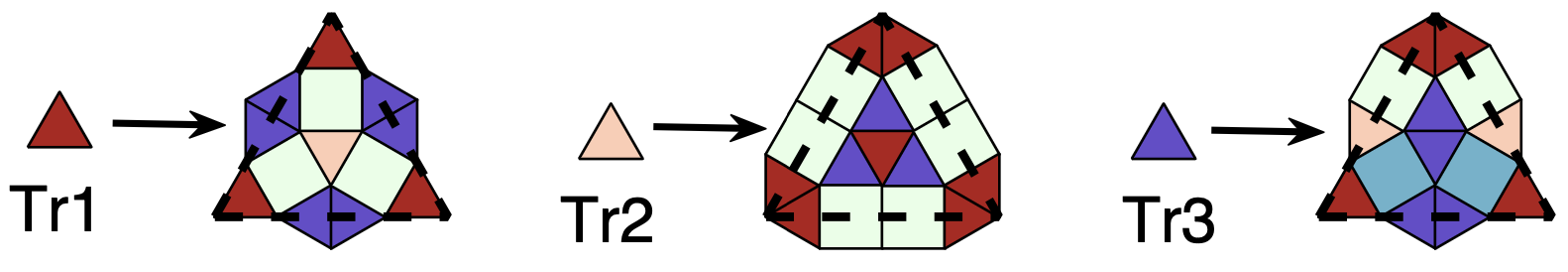}\quad\includegraphics[width=6cm]{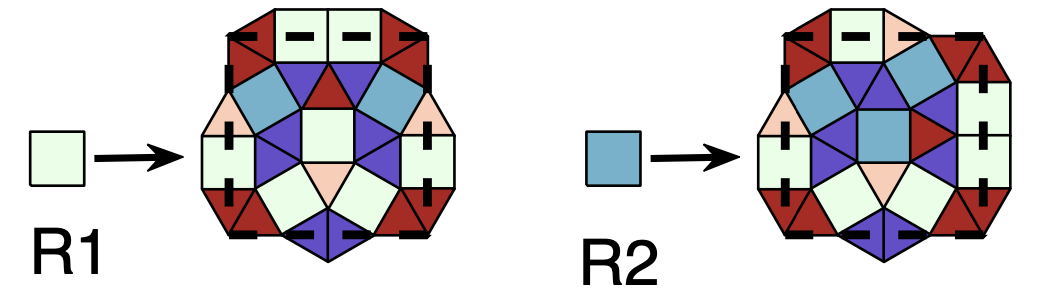}
\caption{Square Triangle tiling  substitution\label{SquareTriangleSubst}}
\end{figure}

\begin{figure}
\centering
\includegraphics[width=4 cm]{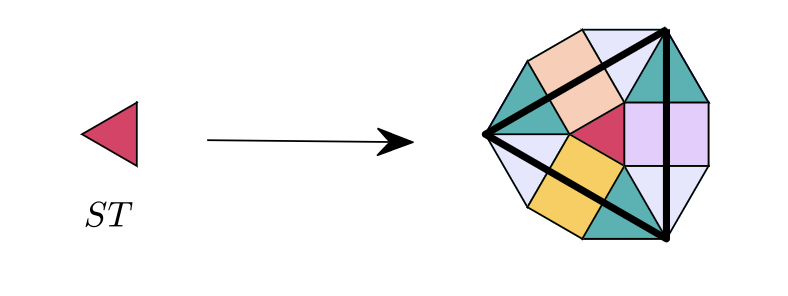}~\includegraphics[width= 4 cm]{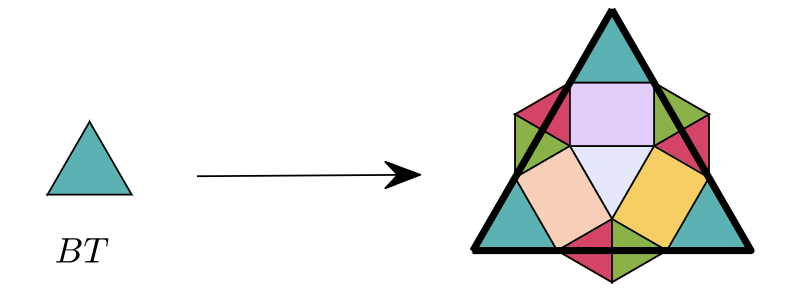}\quad\includegraphics[width= 4 cm]{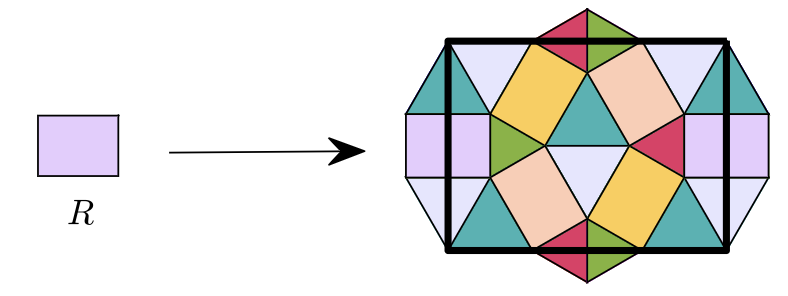}
\caption{Bronze-mean tiling substitution\label{BronzeSubst}}
\end{figure}

In this paper, we study the general theory of substitutions
called \emph{overlapping substitutions} that are not always stone inflations but may have ``protruding''.
{\color{blue}During the study, we realized that even the meaning of the substitution matrix is not yet clear. For example, the motivating example presented in Figure \ref{BronzeSubst}, introduced by Dotera-Bekku-Ziher \cite{DBZ}, which was later generalized and applied, e.g., in \cite{NZD,MKD,MKTMD}, seems unreasonable by the usual theory.}
The expanded tiles are depicted by bold lines in the figure.
Identifying rotated tiles, the substitution shown in the figure suggests its associated substitution matrix being
\begin{equation}
\label{DoteraMatrix}
\begin{pmatrix}
1& 3 & 4 \\
3& 4 & 8 \\
3/2 & 3 & 5
\end{pmatrix}
\end{equation}
which contains a non-integer entry $3/2$. Here, the 1st column means that the inflation of ST tile by the bronze mean substitution contains 1 ST tile, 3 BT tiles, and 3/2 R tiles, as we count the tiles within the bold line triangle in Figure \ref{BronzeSubst}.
{\color{blue}
Moreover, generalizations of the Bronze mean substitution,
which we obtained in this study and are depicted in
Figure \ref{DZ},
\begin{figure}[h]
    \centering
\includegraphics[width=12 cm]{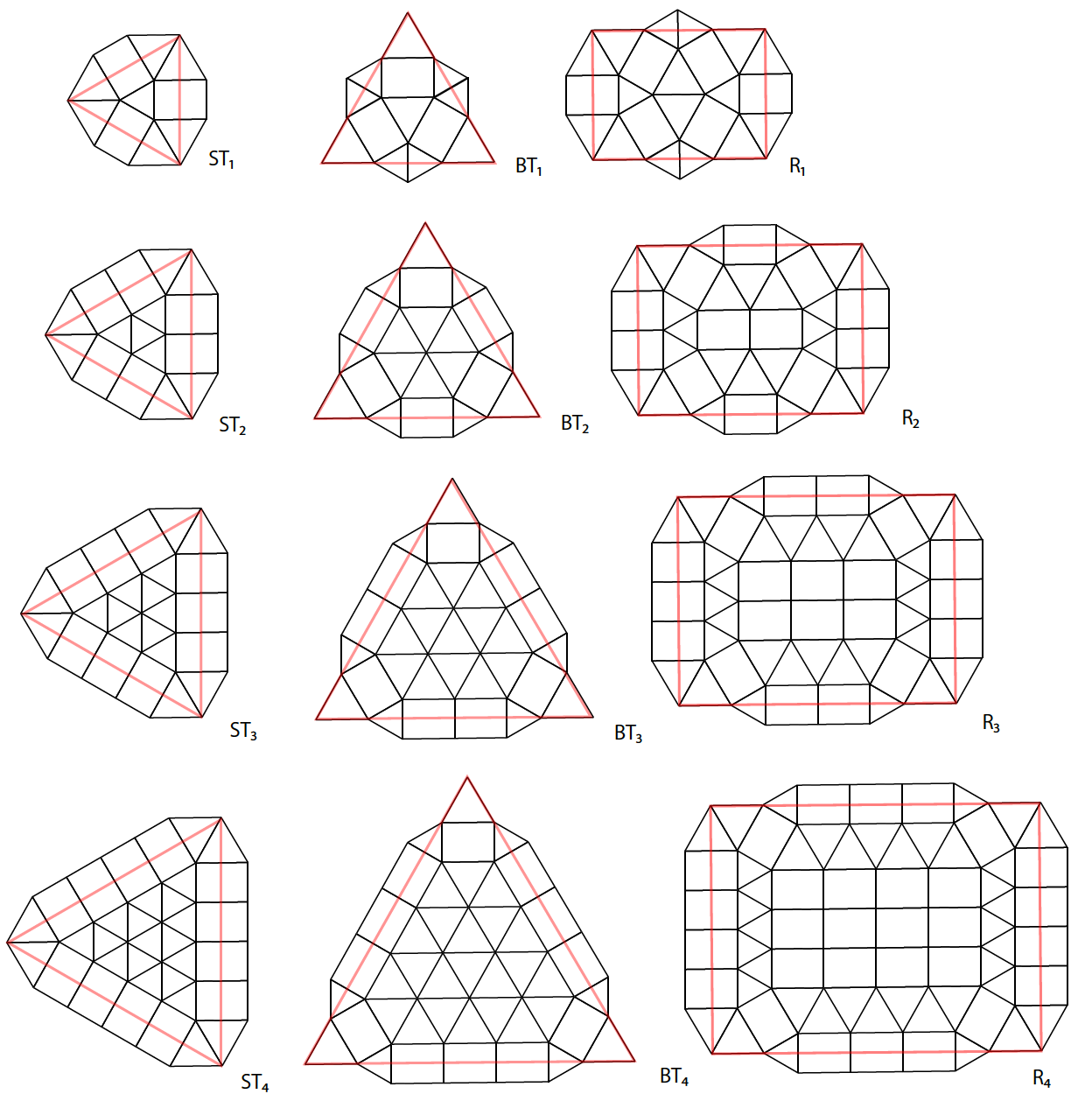}
    \caption{DZ substitutions of level $n=1,2,3,4$\label{DZ}}
\end{figure}
are a family of substitutions whose substitution matrices are
$$
\begin{pmatrix}
n^2 & 3 & 4n\\
3 & (1+n)^2 &4(1+n)\\
\frac{3n}2&\frac{3(1+n)}2& 3+n+n^2\\
\end{pmatrix}.
$$
These matrices are also non-integer valued and have surprising commutativity.
In Example \ref{DZsubst}, we discuss this example in detail.}
{\color{blue} We observe that the existing theoretical framework does not account for this situation: it always assumes that the substitution matrix has only integer entries.}
What information do such matrices with non-integer entries contain? This is the first question we address in this paper.

\textcolor{blue}{In particular, we will prove that the right Perron--Frobenius eigenvector encodes the tile frequencies (Theorem \ref{thm_uniform_tile_freq}). This may be assumed to be true in the literature, but not obvious because the overlaps of supertiles becomes larger and larger when we consider higher-level supertiles. We have to count tiles so that there is neither excess nor shortage. Theorem \ref{thm_uniform_tile_freq} seems less trivial when we observe one or two-parameter families of overlapping substitutions, for which the matrices depend on the parameters (see Section \ref{Sec5}).  The right eigenvectors for matrices are independent of the parameters, for the examples in the section. Theorem \ref{thm_uniform_tile_freq} implies this is always the case.
Moreover, we will prove that the patch frequencies also converge uniformly and are computed by the right Perron--Frobenius eigenvectors (Theorem \ref{thm_uniform_patch_freq}). This implies the associated dynamical systems are uniquely ergodic.}

\textcolor{blue}{
The second question is more fundamental, although it came later than the above discussion, because there are very few known results. It is
about what we call consistency.} 
{\color{blue}A substitution is said to be consistent if there are no illegal (partial) overlaps in all of its iterations.
For example, Figure \ref{Non-consis}
shows an example of illegal overlaps that emerge at the third step of iteration, but not at the first and second steps. Another example is the substitution in Figure \ref{BronzeSubst}, which can seemingly be iterated without any problems, as shown in Figure \ref{BronezeIte}. However, we must be careful before concluding that there are no illegal (partial) overlaps of tiles. We will prove, in the end, that this is the case, but we observe that the boundaries of the iterated patterns evolve in a complicated way, so that two distant parts might intersect illegally after sufficiently many iterations. In fact, in the iteration of another substitution (Figure \ref{fig:inflation_complicated-boundary}), two ``arms'' in the boundaries evolve to intersect each other.
We have to check whether such an intersection occurs, and if it does, we have to verify that it does not create illegal overlaps. In Section \ref{Sec4}, we discuss how to prove consistency for specific examples, including the Bronze-mean tilings (Figure \ref{BronzeSubst}) and Square triangle tilings (Figure \ref{SquareTriangleSubst}), using the method of fractal geometry.}

\begin{figure}
    \centering
    \includegraphics[width=0.7\linewidth]{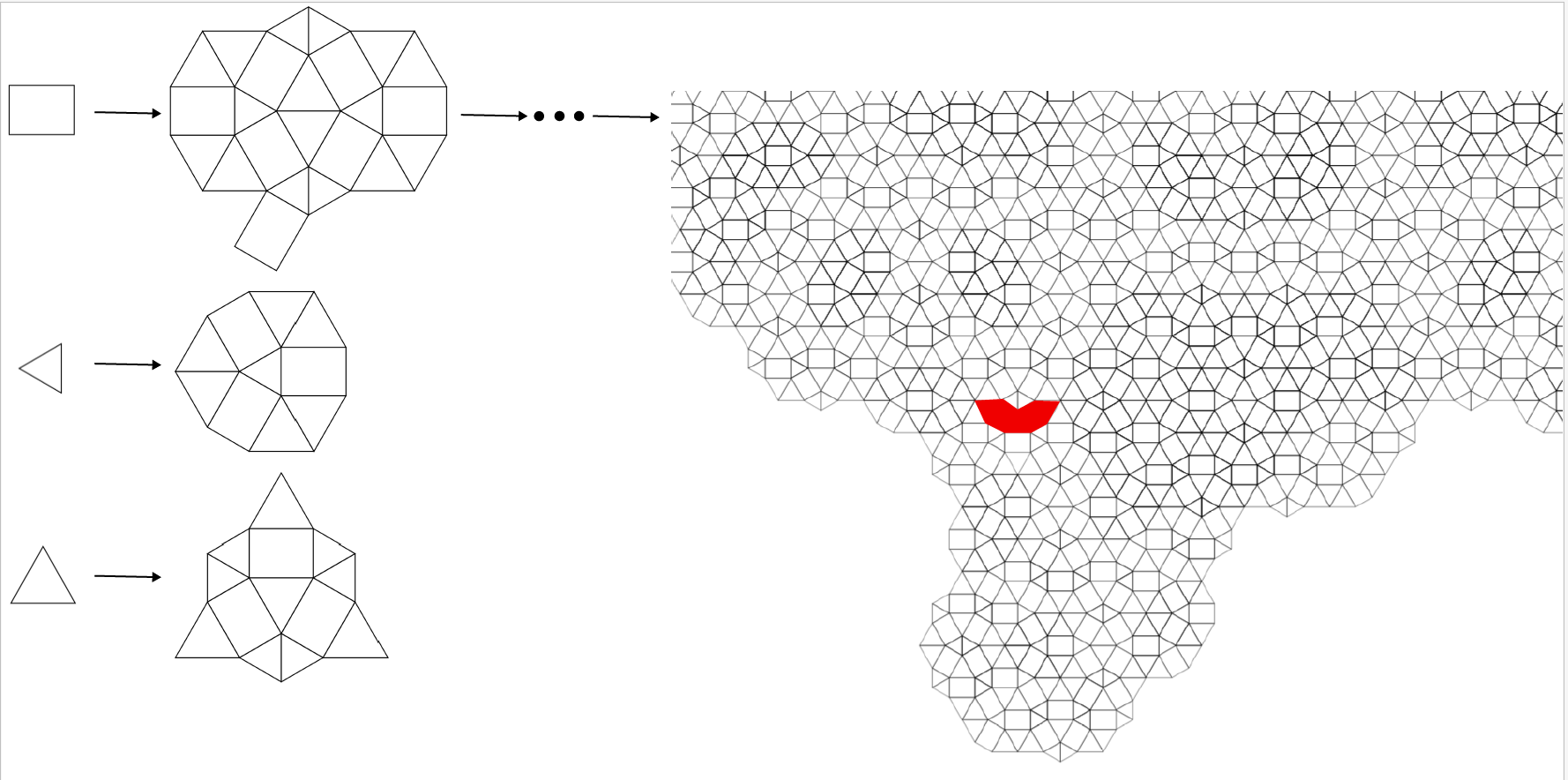}
    \caption{An example of overlapping substitution with complicated boundary growth. The right-hand side is the lower part of an inflation of the rectangle. We observe a ``hole'', which is indicated by the red color.}
    \label{fig:inflation_complicated-boundary}
\end{figure}
\begin{figure}[h]
\includegraphics[width=2.5cm]{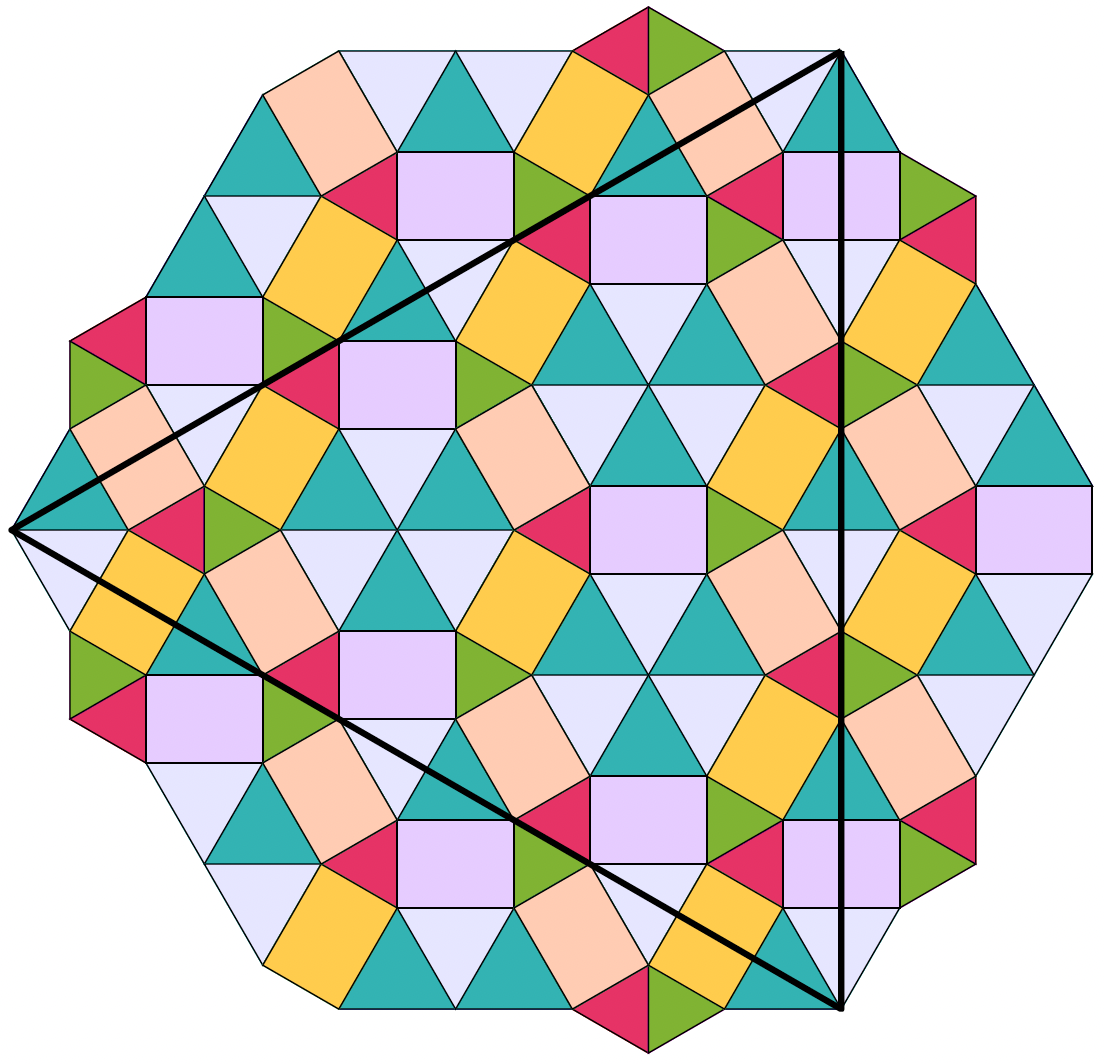}\quad \quad \quad \quad\quad \quad  \includegraphics[width= 2.8 cm]{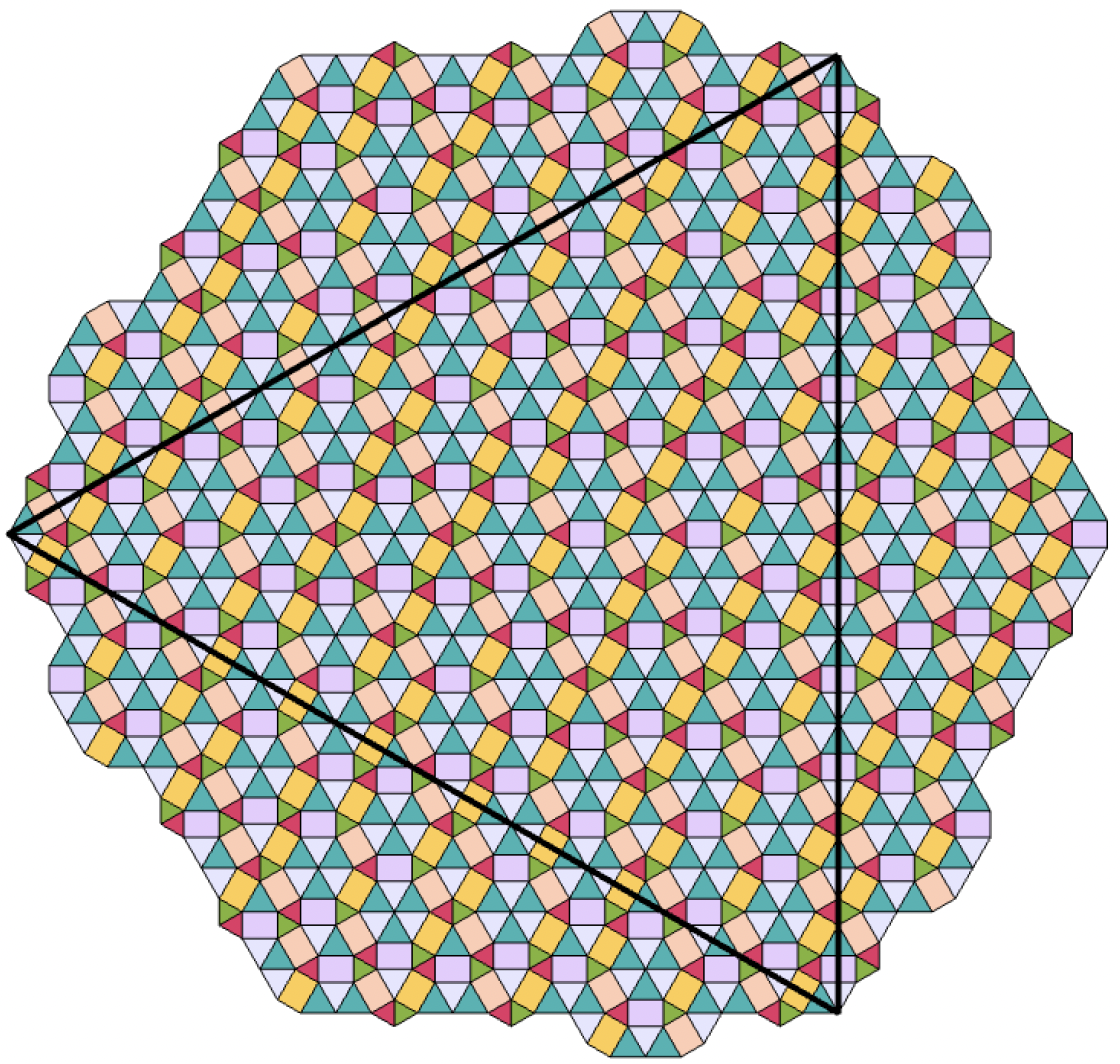}\\
\includegraphics[width=3 cm]{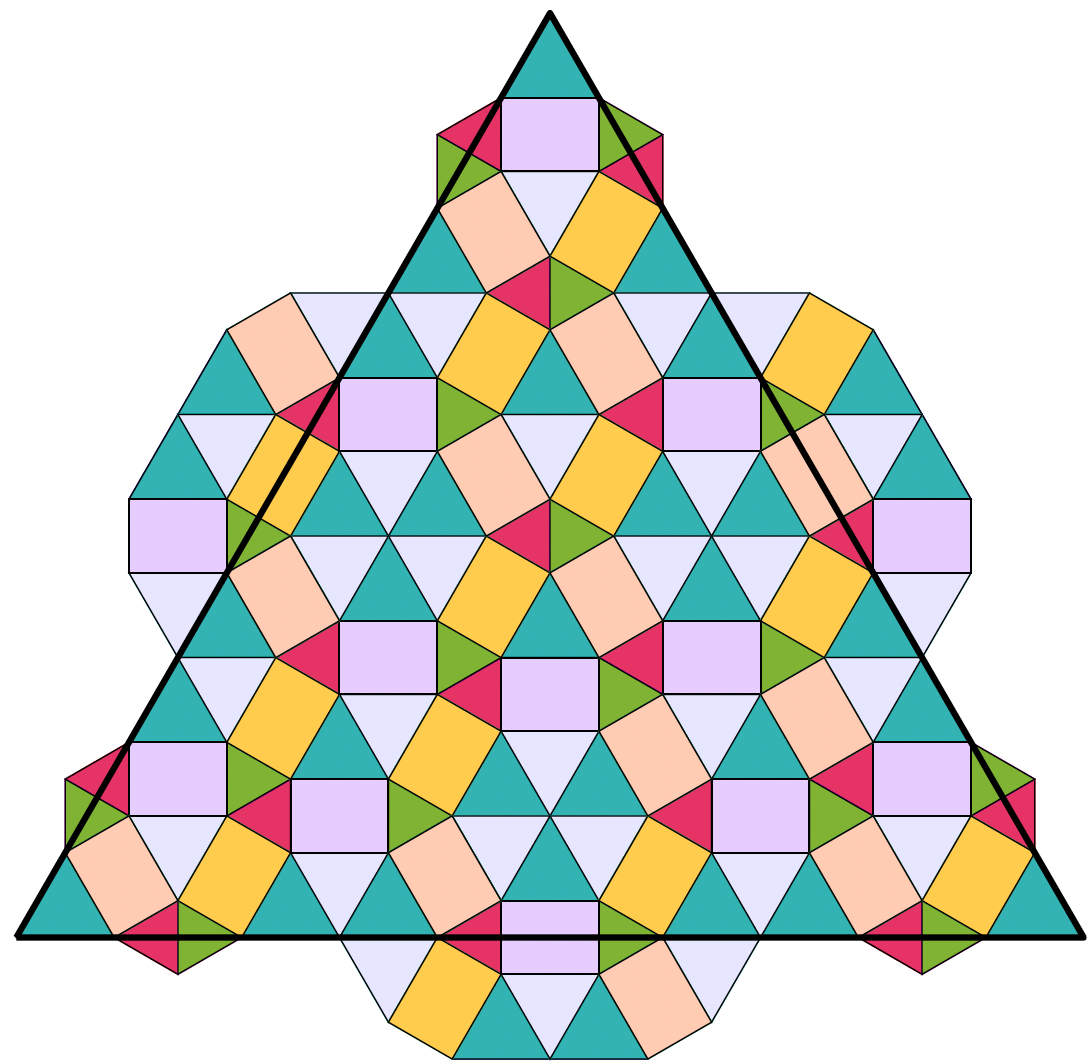}\quad \quad \quad \quad \quad \quad \includegraphics[width=3 cm]{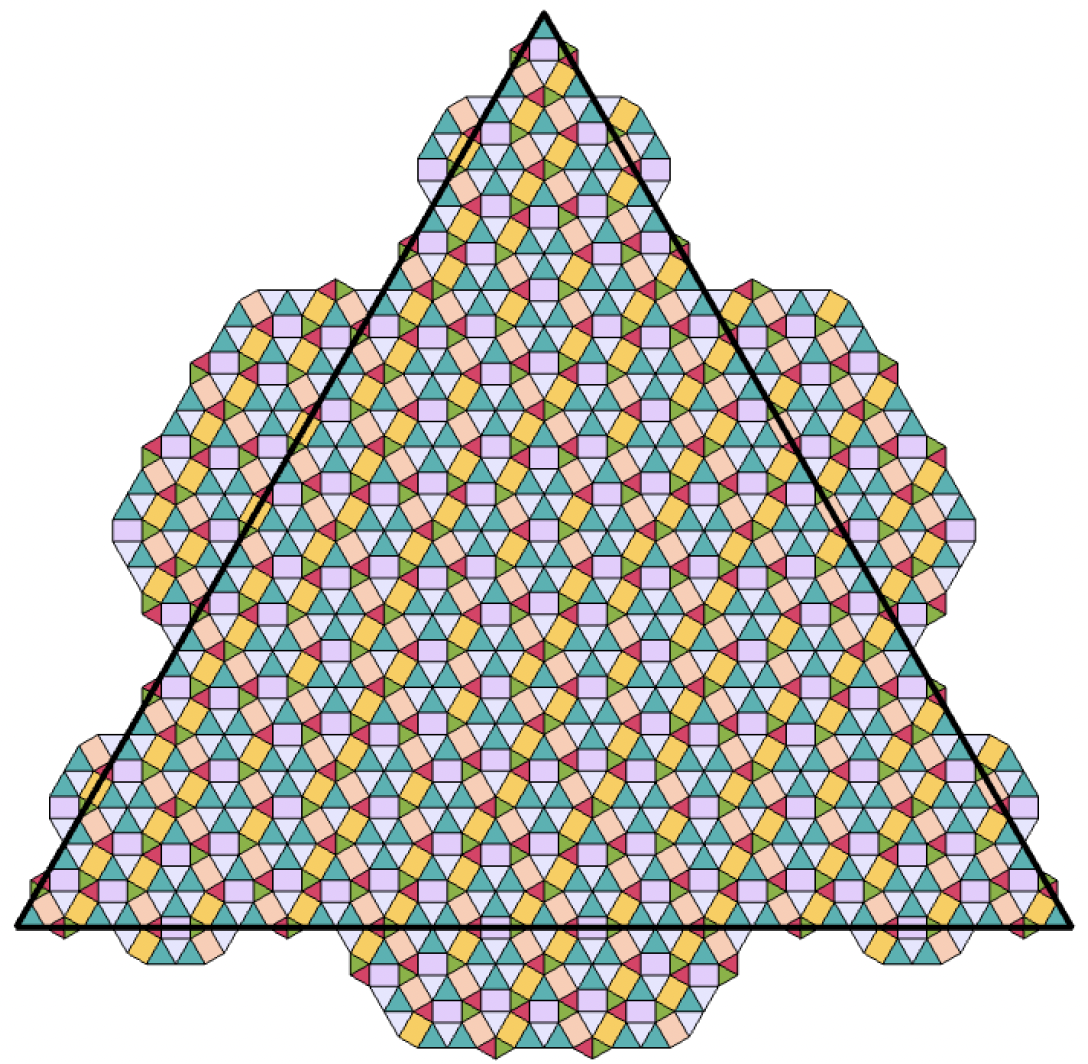}\\
\includegraphics[width=3.2 cm]{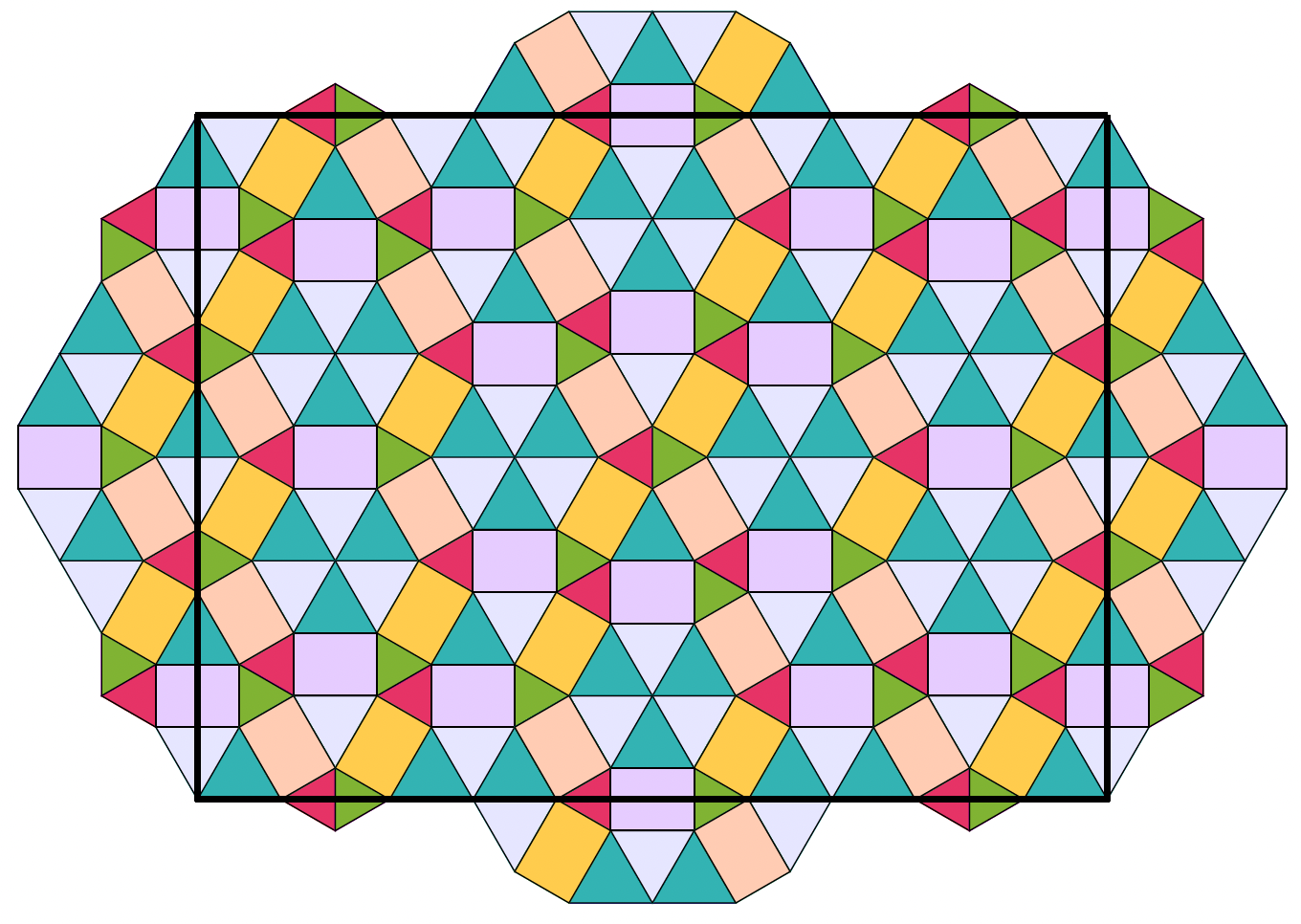}\quad \quad\quad \quad  \quad \quad \includegraphics[width=3.4 cm]{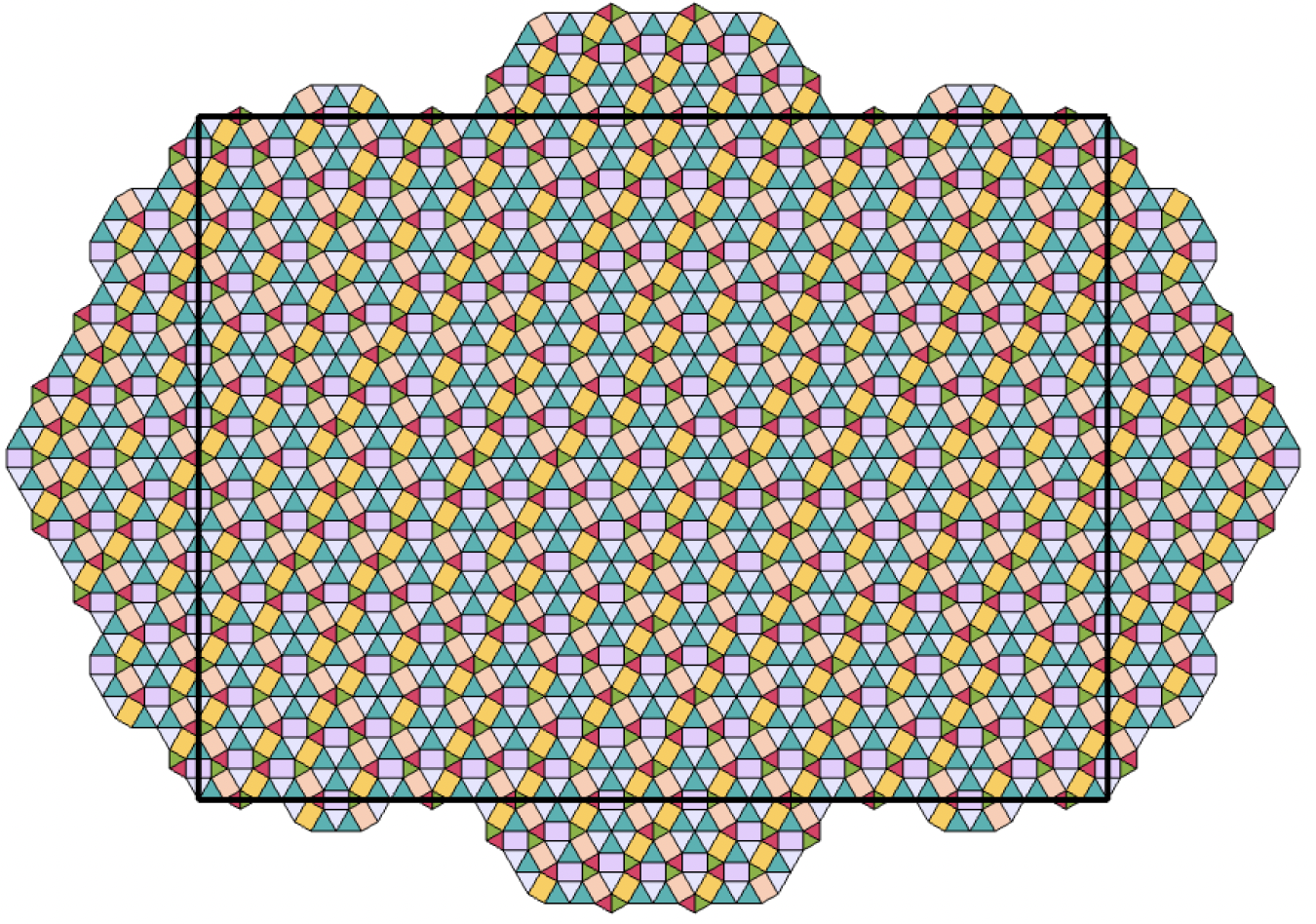}
\caption{The second and third iteration of Bronze-means `tiles' starting with `ST', `BT' and `R'.}\label{BronezeIte}
\end{figure}

{\color{blue}To gain a better understanding of the overlapping substitution, we construct several interesting examples in Section \ref{Sec5} as well. Here, we present a one-dimensional example.}

\begin{ex}[One dimensional symbolic weighted substitution]\label{ex1_one-dim-substi}
Let $\sigma$ be a substitution over $\{a,b,c\}$ as follows.
$$\sigma: \left\{\begin{aligned}
a&\longrightarrow \frac{a}{2}ba\\
b&\longrightarrow c\frac{a}{2}\\
c&\longrightarrow b\frac{a}{2}
\end{aligned}
\right.
$$ 
\end{ex}

\begin{figure}[h]
\centering
\includegraphics[width=7 cm]{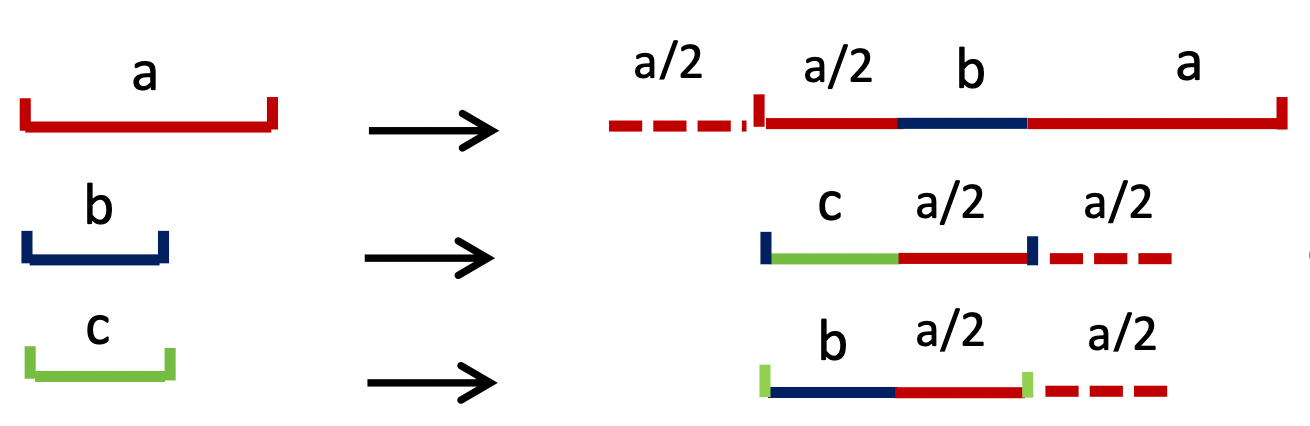}
\caption{Substitution of Example 1.1}\label{Ex1Subst}
\end{figure}

As we will see in Section \ref{Sec5},
we can realize this symbolic substitution as a geometric overlapping substitution, depicted in Figure \ref{Ex1Subst}.
The notation $a/2$ is understood similarly as in Figure \ref{BronzeSubst}, which means that half of the tile $a$ is outside the enlarged original tile (see the dotted line in Figure \ref{Ex1Subst}). The substitution matrix of $\sigma$
is
\begin{align}
    \begin{pmatrix}
3/2& 1/2 & 1/2 \\
1& 0 & 1 \\
0 & 1 & 0
\end{pmatrix}\label{matrix_ex1_symbolic_weighted_substi}
\end{align},
which has non-integer entries again. 
We can iterate this substitution naturally because the overlapping symbols match without causing problems {\color{blue} if we consider concatenating two $\frac{a}{2}$ by $a$}. For example, if we start from a patch $ba$

\begin{align*}
    &\sigma(ba)=c\frac{a}{2}\frac{a}{2}ba=caba\\
    &\sigma^2(ba)=b\frac{a}{2}\frac{a}{2}bac\frac{a}{2}\frac{a}{2}ba=babacaba\\
    &\sigma^3(ba)=c\frac{a}{2}\frac{a}{2}bac\frac{a}{2}\frac{a}{2}bab\frac{a}{2}\frac{a}{2}bac\frac{a}{2}\frac{a}{2}ba=cabacabababacaba
\end{align*}
See the beginning of Section \ref{Sec5} for further details.

In Section \ref{subsec_construction_general_dim}, we will also construct overlapping substitutions in general dimensions. Another advantage of allowing overlaps is that
it is easier to construct examples of substitution.

It is noteworthy that in one dimension, we have yet another fascinating overlapping example that is not a tile but a measure.
Bernoulli convolution is a self-similar measure defined by overlapping interval maps. The support of the measure is the interval itself, but the resulting measure may not be absolutely continuous to the Lebesgue measure, i.e., the overlaps may localize at some small subset, see \cite{PSS}.
We expect that the systematic study of overlapping substitution results in a better understanding of Bernoulli convolutions.


The rest of the paper is organized as follows. In Section \ref{Sec2}, we define overlapping substitutions and substitution matrix, and apply the Perron--Frobenius theory to obtain
information from the substitution matrices.
The righ Perron--Frobenius eigenvector gives the frequency of prototiles, as expected.
As an auxiliary step, we introduce the theory of weighted substitutions. \textcolor{blue}{Moreover, we will prove that the patch frequencies converge uniformly and are obtained by right Perron--Frobenius eigenvectors for appropriate modifications of the original substitution. This implies the unique ergodicity for the associated dynamical systems.}
In Section \ref{Sec3}, we prove another expected result: under mild conditions, the expansion constant of an overlapping substitution must be an algebraic integer.
In Section \ref{Sec4}, we discuss the consistency problem of two-dimensional overlapping substitutions. We show that the known overlapping substitutions are consistent using the fractal geometry method.
In Section \ref{Sec5}, we first show the procedure of constructing an overlapping substitution in one dimension. We present several parametric families of overlapping substitutions whose tile sizes vary according to the parameters. 
Then we construct overlapping substitutions from Delone sets with inflation symmetry, in an arbitrary dimension.
{\color{blue}Section \ref{OpenProblem} presents related problems that may be studied in future work. As a complement to Section \ref{Sec2}, we provide the proof of Proposition \ref{prop_frequency_weighted-substi} in Section \ref{appendix}.}

\section{The theory of weighted substitutions and overlapping substitutions}\label{Sec2}

\subsection{Definitions of tiles, patches and tilings}
Let $L$ be a finite set. An $L$-labeled tile is a pair $T=(S,l)$ where $l\in L$, $S\subset\mathbb{R}^d$ is compact non-empty and
$S=\overline{S^{\circ}}$, that is, the closure of the interior of $S$ coincides with $S$.
If we do not specify the set $L$ of labels, the $L$-labeled tiles are just called labeled tiles.
There are also unlabeled tiles, that are by definition a compact and non-empty $S\subset\mathbb{R}^d$ such that
$S=\overline{S^{\circ}}$.
Both labeled tiles and unlabeled tiles are called tiles.
For a labeled tile $T=(S,l)$, we use a notation $\supp T=S$ and $\Int T=S^{\circ}$.
We define the translation of $T$ by $x\in\mathbb{R}^d$ via $T+x=(S+x, l)$.
For an unlabeled tile $S$, we use a notation $\supp S=S$ and $\Int S=S^{\circ}$.
We give a presentation that deals with both labeled and unlabeled tiles by these notations.

A set consisting of tiles is called a pattern.
A pattern $\mathcal{P}$  such that
$T_1,T_2\in\mathcal{P}$ and $\Int T_1\cap\Int T_2\neq\emptyset$ imply $T_1=T_2$ is called a patch.
We say two tiles $T_1,T_2$ in a pattern are illegal overlap if $T_1\neq T_2$ and
$\Int T_1\cap\Int T_2\neq\emptyset$.
A patch is a pattern without any illegal overlaps.
For a patch $\mathcal{T}$, we define its support $\supp\mathcal{T}$ via
\begin{align*}
     \supp\mathcal{T}=\bigcup_{T\in\mathcal{T}}\supp T.
\end{align*}
A patch $\mathcal{T}$ such that $\supp\mathcal{T}=\mathbb{R}^d$ is called a tiling.
A translation of a pattern $\mathcal{P}$ by a vector $x\in\mathbb{R}^d$ is defined via
\begin{align*}
    \mathcal{P}+x=\{T+x\mid T\in\mathcal{P}\}.
\end{align*}

\subsection{The theory of weighted substitutions}

{\color{blue}Our goal here is to obtain information on substitution matrices that may have
fractional, or even irrational entries. To do so, we can get
a clearer picture by taking a detour to
the theory of weighted substitution.
In fact, the substitution matrix for an overlapping substitution will be defined as one for the associated weighted substitution.
For the latter, it is easy to prove that the right Perron--Frobenius eigenvector gives the frequencies.
In this section, we study the general
theory of weighted substitutions.}

Let $\mathcal{B}$ be a finite set of tiles. We use the notation $\mathcal{B}+\mathbb{R}^d$ for the set of all
the translations of the tiles in $\mathcal{B}$, that is, the set of all tiles $T+x$ where
$T\in\mathcal{B}$ and $x\in\mathbb{R}^d$.
In this section, we consider
the weights of the tiles in patches. For example,
we start with two patches $\mathcal{P}_1=\{[0,1],[1,2]\}$
and $\mathcal{P}_2=\{[1,2],[2,3]\}$,  whose elements are intervals, and
we consider weights on each tile, for example
\begin{enumerate}
    \item the weight of $[0,1]$ in 
    $\mathcal{P}_1$ is $2/3$ and one of $[1,2]$
    in $\mathcal{P}_1$ is $3/4$, and
    \item the weight of $[1,2]$ in 
    $\mathcal{P}_2$ is $1/2$ and one of $[2,3]$
    in $\mathcal{P}_2$ is $1$.
\end{enumerate}
(The weights can be arbitrary real numbers greater than or equal to 0.)
Without weights, the union
$\mathcal{P}_1\cup\mathcal{P}_2$ is
$\{[0,1],[1,2],[2,3]\}$, but with weights,
if we take the union, the weights of the
overlapping tiles add up, and so the result is
a patch $\{[0,1],[1,2],[2,3]\}$ with a weight
$2/3$ on $[0,1]$, $5/4$ on $[1,2]$ and $1$ 
on $[2,3]$. The readers may notice that
this is exactly what the functions on
patches are. 
However, we do not deal with functions
$v_1\colon\mathcal{P}_1\rightarrow\mathbb{R}$
and
$v_2\colon\mathcal{P}_2\rightarrow\mathbb{R}$,
because these cannot be added since the
domains are different. 
Instead, we should consider maps
$v_1$ and $v_2$ into which we can plug
arbitrary tiles in $\mathbb{R}$.
The values for $v_1$ should be
\begin{align}
    v_1(T)=
    \begin{cases}
    2/3 &\text{if $T=[0,1]$}\\
    3/4 &\text{if $T=[1,2]$}\\
    0&\text{otherwise.}
    \end{cases}\label{def_example_for_wp_v1}
\end{align}
We endow values for $v_2$ in an appropriate way.
In what follows, we study such functions $v\colon\mathcal{B}+\mathbb{R}^d\rightarrow\mathbb{R}$, where $\mathcal{B}$ is a given finite
set of tiles.

Let us enumerate the tile in $\mathcal{B}$ so that
we have $\mathcal{B}=\{T_1,T_2,\ldots T_{\kappa}\}$.
A map $v\colon\mathcal{B}+\mathbb{R}^d\rightarrow\mathbb{R}$ is called a weighted pattern
with alphabet $\mathcal{B}$. 
The set of all weighted patterns with alphabet $\mathcal{B}$ is denoted by $\WP_{\mathcal{B}}$.
This is a vector space over $\mathbb{R}$.

{\color{blue}Weighted substitutions
are maps that act on weighted patterns.
This is a substitution with weights.
To capture the notion, consider the
usual Fibonacci substitution:
\begin{align*}
    \rho\colon &[0,\tau]\mapsto \{[0,\tau],[\tau,\tau+1]\},\\
    &[0,1]\mapsto\{[0,\tau]\},
\end{align*}
where $\tau$ is the golden ratio.

We can add arbitrary weights to the images:
\begin{align*}
    \xi\colon &([0,\tau],1)\mapsto \left\{\left([0,\tau],1\right),\left([\tau,\tau+1],\frac{1}{2}\right)\right\},\\
    &([0,1],1)\mapsto\left\{\left([0,\tau],\frac{1}{3}\right)\right\},
\end{align*}
where numbers such as $1$ and $\frac{1}{2}$
are weights. If the tiles that we plug into
$\xi$ have non-one weights, the weight is
multiplied  to the image:
\begin{align}
    \xi\colon ([0,\tau],r)\mapsto \left\{\left([0,\tau],r\right),\left([\tau,\tau+1],\frac{r}{2}\right)\right\},
    ([0,1],r)\mapsto\left\{\left([0,\tau],\frac{r}{3}\right)\right\}.
    \label{eq_weighted_Fibonacci}
\end{align}
We realize this idea of weighted substitution as follows, via using the above
framework for weighted patterns. 

As in the usual theory of tilings,
we should use a topology and convergences
of weighted tilings and weighted patches.}
On $\WP_{\mathcal{B}}$, we define the weak topology. Consider all the maps
\begin{align*}
    \tau_{T}\colon \WP_{\mathcal{B}}\ni v\mapsto v(T)\in\mathbb{R},
\end{align*}
where $T$'s are arbitrary elements of $\mathcal{B}+\mathbb{R}^d$. The weak topology
is the weakest topology that makes all these maps continuous. {\color{blue}Thus,  a net of
weighted patterns $(v_i)$ converges to a
weighted pattern $v$ if $v_i(T)\rightarrow v(T)$ for arbitrary $T\in\mathcal{B}+\mathbb{R^d}$. }

Next, we define the weighted substitutions. We use the following two notions.
For a weighted pattern $v$ with alphabet $\mathcal{B}$, its support
pattern is
\begin{align*}
 \SP(v)=\{T\in\mathcal{B}+\mathbb{R}^d\mid v(T)\neq 0\}.
\end{align*}
This is the set of tiles with non-zero
weights.
The region that such tiles cover is called the support region and is denoted by
\begin{align*}
\SR(v)=\overline{\bigcup_{v(T)\neq 0}\supp T}. 
\end{align*}
{\color{blue}For example, for the $v_1$
in \eqref{def_example_for_wp_v1}, we have
\begin{align*}
    &\SP(v_1)=\{[0,1],[1,2]\}\\
    &\SR(v_1)=[0,2].
\end{align*}

Note that
the map $\xi$ in \eqref{eq_weighted_Fibonacci} should be defined as a map that sends the tile $T=[0,\tau]$ to a weighted pattern  $\xi(T)$ 
which has the values
\begin{align*}
    &\xi(T)([0,\tau])=1\\
    &\xi(T)([\tau,\tau+1])=\frac{1}{2},
\end{align*}
and $\xi(T)(S)=0$ for any other $S\in\mathcal{B}+\mathbb{R}^d$. $\xi([0,1])$ is defined in a similar way.
The support patches are 
$\SP(\xi([0,\tau]))=\{[0,\tau],[\tau,\tau+1]\}$ and $\SP(\xi([0,1]))=\{[0,\tau]\}$.}

\begin{defin}
Consider a linear and expanding map
$\phi\colon\mathbb{R}^d\rightarrow\mathbb{R}^d$.
Consider also a map $\xi\colon\mathcal{B}\rightarrow\WP_{\mathcal{B}}$ such that, for each $T\in\mathcal{B}$,
\begin{enumerate}
 \item $\SP(\xi(T))$ is a finite set, and
\item $\xi(T)$ is a non-negative map, that is, for any $S\in\mathcal{B}+\mathbb{R}^d$, we have
$\xi(T)(S)\geqq 0$, which means that all the weights are non-negative.
\end{enumerate}
We call the triple
$(\mathcal{B},\phi,\xi)$ a pre-weighted substitution,
or we just simply say $\xi$ a pre-weighted substitution with alphabet $\mathcal{B}$. 
\end{defin}

At this stage we do not assume any relations between $\xi$ and $\phi$.
In what follows, we will define iterations of $\xi$, and we call $\xi$ a weighted substitution if, after applying arbitrary iterations of $\xi$ to any tile $T\in\mathcal{B}$, the support pattern is a patch, that is, there are
no illegal overlaps (Definition \ref{consistency_weighted}).

We extend the domain of pre-weighted substitution as for the usual substitution rules.
Recall that for a usual substitution $\rho$,
given a definition of the image $\rho(P)$ 
of a proto-tile $P$, we extend the domain
of $\rho$ via $\rho(P+x)=\rho(P)+\phi(x)$,
where $\phi$ is the expansion map.
In our context, for a weighted substitution
$\xi$, $T\in\mathcal{B}$ and $x\in\mathbb{R}^d$, set
\begin{align*}
 \xi(T+x)(S)=\xi(T)(S-\phi(x)),
\end{align*}
where $S$ is an arbitrary element of $\mathcal{B}+\mathbb{R}^d$. In this way, we obtain a map
\begin{align*}
\xi\colon\mathcal{B}+\mathbb{R}^d\rightarrow\WP_{\mathcal{B}}.
\end{align*}
 It is easy to prove that
\begin{align*}
 \SP(\xi(T+x))=\SP(\xi(T))+\phi(x).
\end{align*}

Given a weighted substitution $\xi$, we define a substitution matrix for it.
For a $v\in\WP_{\mathcal{B}}$ with finite $\SP(v)$ and $T\in\mathcal{B}$, we set
\begin{align*}
 \tau_T(v)=\sum_{x\in\mathbb{R}^d}v(T+x).
\end{align*}
This is the sum of all the weights of the tiles that are translates of $T$ in the support of $v$.
Note that the sum is finite, i.e., there are only finitely many positive terms. 
The substitution matrix for $\xi$ is the $\kappa\times \kappa$ matrix whose $(i,j)$ element is
$\tau_{T_i}(\xi(T_j))$.
A weighted substitution $\xi$ is said to be primitive if its substitution matrix is
primitive.

Next, we will further ``extend'' the domain for $\xi$ to $\WP_{\mathcal{B}}$.
\begin{lem}\label{lem1_frequency}
 For any $T\in\mathcal{B}+\mathbb{R}^d$, there are only finitely many $S\in\mathcal{B}+\mathbb{R}^d$
such that $\xi(S)(T)\neq 0$.
\end{lem}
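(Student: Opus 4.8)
The plan is to reduce the statement to a finiteness property of the finitely many generating substitution rules $\xi(T_1),\dots,\xi(T_\kappa)$, exploiting the translation-equivariance $\SP(\xi(T+x))=\SP(\xi(T))+\phi(x)$ together with the fact that $\phi$ is invertible (being linear and expanding). Fix $T\in\mathcal{B}+\mathbb{R}^d$; write $T=T_j+y$ for some $j\in\{1,\dots,\kappa\}$ and $y\in\mathbb{R}^d$, which is possible since every element of $\mathcal{B}+\mathbb{R}^d$ is a translate of one of the $T_i$. We want to count the pairs $(S,?)$ with $S\in\mathcal{B}+\mathbb{R}^d$ and $\xi(S)(T)\neq 0$. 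Again write $S=T_i+x$; then by the definition of the extension of $\xi$ to $\mathcal{B}+\mathbb{R}^d$ we have $\xi(S)(T)=\xi(T_i)(T-\phi(x))$, so the condition $\xi(S)(T)\neq 0$ is equivalent to $T-\phi(x)\in\SP(\xi(T_i))$.

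First I would observe that for each fixed $i$ the set $\SP(\xi(T_i))$ is finite by hypothesis (1) in the definition of a pre-weighted substitution. So the condition $T-\phi(x)\in\SP(\xi(T_i))$ forces $\phi(x)=T-U$ in the sense that $T$ and $U$ have the same underlying shape-translate data, for one of the finitely many $U\in\SP(\xi(T_i))$. Concretely, if $U=T_k+z$ for the appropriate $k$ and $z$, then $T-\phi(x)=U$ means $T_j+y-\phi(x)=T_k+z$ as tiles, which is possible only if $k=j$ (the underlying unlabeled shape, and label, must match; here one uses that the finitely many prototiles $T_1,\dots,T_\kappa$ are, by the enumeration, the distinct tiles of $\mathcal{B}$) and then $\phi(x)=y-z$, i.e. $x=\phi^{-1}(y-z)$ is uniquely determined. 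Hence for each $i$ there are at most $\card \SP(\xi(T_i))$ values of $x$ — in fact at most the number of $U\in\SP(\xi(T_i))$ whose shape/label matches that of $T$ — giving at most $\sum_{i=1}^{\kappa}\card\SP(\xi(T_i))<\infty$ tiles $S$ in total, which is the claim.

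The mild point to be careful about — and the only place that is more than bookkeeping — is the step identifying when $T_j+y-\phi(x)$ can equal $T_k+z$: one must argue that two tiles $T_a+u$ and $T_b+w$ with $T_a,T_b\in\mathcal{B}$ coincide only if $a=b$ (and then $u=w$). This is immediate if the prototiles in $\mathcal{B}$ are genuinely distinct as labeled (or unlabeled) tiles, which the enumeration $\mathcal{B}=\{T_1,\dots,T_\kappa\}$ presupposes; in the unlabeled case one also needs that no prototile is a nontrivial translate of another, which again follows from $\mathcal{B}$ being a set of distinct tiles (if $T_a=T_b+w$ with $w\neq 0$ then $T_a$ and $T_b$ would be equal as tiles only when $w=0$, so distinctness rules this out unless one allows translation-equivalent prototiles, which we do not). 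I would state this as a one-line remark rather than belabor it. Everything else is the substitution of the definitions and the invertibility of $\phi$, so the proof is short; the only obstacle, if any, is making the tile-matching argument precise in whichever of the labeled/unlabeled conventions the paper has fixed.
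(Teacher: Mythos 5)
Your proof is correct and is essentially the paper's own argument: both reduce the claim to the injection $S=T_i+x\mapsto (T_i,\,T-\phi(x))$ into the finite set $\{(P,U)\mid P\in\mathcal{B},\,U\in\SP(\xi(P))\}$, using assumption (1) on $\SP(\xi(T_i))$ and the invertibility of $\phi$ to recover $x$ from $U$. One small caveat: distinctness of the prototiles in $\mathcal{B}$ does not by itself exclude one prototile being a nontrivial translate of another (e.g.\ $[0,1]$ and $[1,2]$), so your ``$k=j$'' remark is not quite justified as stated, but this is harmless because each $U\in\SP(\xi(T_i))$ still determines $\phi(x)$, hence $x$, uniquely (a compact nonempty tile has trivial translation stabilizer), and your bound $\sum_{i=1}^{\kappa}\card\SP(\xi(T_i))$ stands.
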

\begin{proof}
 If $S\in\mathcal{B}+\mathbb{R}^d$ and $\xi(S)(T)\neq 0$, there are $P\in\mathcal{B}$ and
$x\in\mathbb{R}^d$ such that $S=P+x$, and
$\xi(P)(T-\phi(x))\neq 0$. We have an injection
\begin{align*}
   \{S\mid \xi(S)(T)\neq 0\}\ni S
    \mapsto (P,T-\phi(x))\in\{(P,U)\mid P\in\mathcal{B},U\in\SP(\xi(P))\}.
\end{align*}
\end{proof}

\begin{lem}
 For any $v\in\WP_{\mathcal{B}}$, the sum
\begin{align*}
   \sum_{T\in\SP(v)}v(T)\xi(T)
\end{align*}
is convergent with respect to the weak topology, that is, for any $S\in\mathcal{B}+\mathbb{R}^d$,
 there is a real number $\alpha$ such
that, for any  $\varepsilon>0$, 
there is a finite set $F\subset\SP(v)$ with
\begin{align*}
 \left|\sum_{T\in F'}v(T)\xi(T)(S)-\alpha\right|<\varepsilon
\end{align*}
for any finite $F'$ with $F\subset F'\subset\SP(v)$.
\end{lem}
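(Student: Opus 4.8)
The plan is to reduce the claimed net convergence to a local finiteness statement and then to invoke Lemma~\ref{lem1_frequency}. Fix an arbitrary $S\in\mathcal{B}+\mathbb{R}^d$; it suffices to produce a real number $\alpha$ (depending on $S$) and, for each $\varepsilon>0$, a finite set $F\subset\SP(v)$ as in the statement. The key observation is that for this fixed $S$ the family $\{v(T)\,\xi(T)(S)\}_{T\in\SP(v)}$ has only finitely many nonzero members, so the net of partial sums over finite subsets of $\SP(v)$ (ordered by inclusion) is eventually constant, and convergence is immediate.

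Concretely, first I would apply Lemma~\ref{lem1_frequency}, which, although stated with the target tile written as $T$, holds verbatim for the tile $S$: the set $G=\{U\in\mathcal{B}+\mathbb{R}^d\mid \xi(U)(S)\neq 0\}$ is finite. Next, set $F=G\cap\SP(v)$, which is a finite subset of $\SP(v)$, and define
\[
 \alpha=\sum_{T\in F}v(T)\,\xi(T)(S),
\]
a finite sum. Note that $F$ and $\alpha$ can be chosen uniformly in $\varepsilon$, since they do not depend on it.

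Then I would check that the tail vanishes exactly, not merely approximately: if $F'$ is any finite set with $F\subseteq F'\subseteq\SP(v)$, then every $T\in F'\setminus F$ lies in $\SP(v)$ but not in $G$, hence $\xi(T)(S)=0$, so
\[
 \sum_{T\in F'}v(T)\,\xi(T)(S)=\sum_{T\in F}v(T)\,\xi(T)(S)+\sum_{T\in F'\setminus F}v(T)\,\xi(T)(S)=\alpha .
\]
Therefore $\bigl|\sum_{T\in F'}v(T)\,\xi(T)(S)-\alpha\bigr|=0<\varepsilon$ for every $\varepsilon>0$, which is exactly the assertion.

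There is no substantial obstacle: all the work is hidden in Lemma~\ref{lem1_frequency}, whose injection into $\{(P,U)\mid P\in\mathcal{B},\,U\in\SP(\xi(P))\}$ supplies the needed local finiteness, and the positivity of $\xi$ is not even used here. The only points requiring (minor) care are the uniform choice of $F$ in $\varepsilon$ and bookkeeping of the $S$-dependence of $\alpha$, so that afterwards one may legitimately extend the domain of $\xi$ to all of $\WP_{\mathcal{B}}$ by declaring $\xi(v)(S)=\alpha$ and check that the resulting $v\mapsto\xi(v)$ is linear and continuous for the weak topology.
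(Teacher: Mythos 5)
Your proof is correct and follows the same route as the paper: both invoke Lemma \ref{lem1_frequency} to see that for fixed $S$ only finitely many $T\in\SP(v)$ have $\xi(T)(S)\neq 0$, so the sum is effectively finite. You merely spell out the net-convergence bookkeeping (choosing $F=G\cap\SP(v)$ and noting the tail vanishes identically) that the paper leaves implicit.
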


\begin{proof}
  For each $S\in\mathcal{B}+\mathbb{R}^d$, by Lemma \ref{lem1_frequency}, the set
\begin{align*}
 \{T\in\SP(v)\mid \xi(T)(S)\neq 0\}
\end{align*}
is finite, and so
\begin{align*}
 \sum_{T\in\SP(v)}v(T)\xi(T)(S)
\end{align*}
is a finite sum.
\end{proof}

Given a weighted pattern $v$, we define its image by $\xi$, which is the result of applying $\xi$ to each weighted tile in $v$.
We set
\begin{align*}
 \xi(v)=\sum_{T\in\SP(v)}v(T)\xi(T)=\sum_{T\in\mathcal{B}+\mathbb{R}^d}v(T)\xi(T),
\end{align*}
and so we have a map
\begin{align*}
 \xi\colon\WP_{\mathcal{B}}\rightarrow\WP_{\mathcal{B}}.
\end{align*}
\begin{defin}\label{consistency_weighted}
    The pre-weighted substitution is said to be consistent if the support pattern for $\xi^n(\delta_T)$
    is a patch (that is, there are no illegal overlaps) for any $n>0$ and
    $T\in\mathcal{B}$,
    where $\delta_T$ is
    defined via
    \begin{align*}
        \delta_T(S)
        =\begin{cases}
            1 &\text{ if $S=T$}\\
            0 &\text{ otherwise.}
        \end{cases}
    \end{align*}
\end{defin}
\begin{lem}
 The map
\begin{align*}
 \xi\colon \WP_{\mathcal{B}}\ni v\mapsto \xi(v)\in\WP_{\mathcal{B}}
\end{align*}
is linear and continuous with respect to the weak topology.
\end{lem}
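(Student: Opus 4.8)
The plan is to verify linearity and weak continuity directly from the definition $\xi(v)=\sum_{T\in\mathcal{B}+\mathbb{R}^d}v(T)\xi(T)$, testing everything against the coordinate maps $\tau_S$, since the weak topology on $\WP_{\mathcal{B}}$ is by construction the initial topology for the family $\{\tau_S\mid S\in\mathcal{B}+\mathbb{R}^d\}$. First I would fix an arbitrary $S\in\mathcal{B}+\mathbb{R}^d$ and recall from Lemma \ref{lem1_frequency} that the index set
\begin{align*}
 F_S=\{T\in\mathcal{B}+\mathbb{R}^d\mid \xi(T)(S)\neq 0\}
\end{align*}
is \emph{finite and independent of $v$}. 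Hence for every $v$ we have the genuinely finite expression
\begin{align*}
 \tau_S(\xi(v))=\sum_{T\in F_S}v(T)\,\xi(T)(S)=\sum_{T\in F_S}\xi(T)(S)\,\tau_T(v).
\end{align*}
This exhibits $\tau_S\circ\xi$ as a fixed finite linear combination of the coordinate functionals $\tau_T$ with constant coefficients $\xi(T)(S)$.

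From this single identity both assertions follow. For linearity: given $v,w\in\WP_{\mathcal{B}}$ and scalars $a,b$, the displayed formula gives $\tau_S(\xi(av+bw))=\sum_{T\in F_S}\xi(T)(S)(a\,v(T)+b\,w(T))=a\,\tau_S(\xi(v))+b\,\tau_S(\xi(w))$ for every $S$; since two weighted patterns agreeing on all $\tau_S$ are equal, $\xi(av+bw)=a\,\xi(v)+b\,\xi(w)$. For continuity: a net $v_\alpha\to v$ in the weak topology means $\tau_T(v_\alpha)\to\tau_T(v)$ for every $T$; since $F_S$ is finite, $\tau_S(\xi(v_\alpha))=\sum_{T\in F_S}\xi(T)(S)\tau_T(v_\alpha)\to\sum_{T\in F_S}\xi(T)(S)\tau_T(v)=\tau_S(\xi(v))$, so $\xi(v_\alpha)\to\xi(v)$ weakly. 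Equivalently, each $\tau_S\circ\xi$ is continuous because it is a finite linear combination of the continuous maps $\tau_T$, and $\xi$ is continuous into the initial topology iff each $\tau_S\circ\xi$ is.

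There is no real obstacle here; the only point requiring care is the interchange of the (a priori infinite) defining sum for $\xi(v)$ with the evaluation $\tau_S$, and that is precisely what the preceding lemma licenses — after evaluating at $S$ only the finitely many $T\in F_S$ contribute, so no convergence subtlety survives and every manipulation above is a finite-sum identity. I would also remark that, because the coefficients $\xi(T)(S)$ are nonnegative, $\xi$ maps positive weighted patterns to positive ones, though this is not needed for the statement.
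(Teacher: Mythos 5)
Your proposal is correct and follows essentially the same route as the paper: fix a test tile $S$, invoke Lemma \ref{lem1_frequency} to reduce $\tau_S(\xi(v))$ to a finite sum $\sum_{T\in F_S}\xi(T)(S)\,\tau_T(v)$, and read off both linearity and weak continuity from this fixed finite linear combination of coordinate functionals. Your write-up is merely a more explicit version of the paper's argument (which dismisses linearity as easy and phrases continuity with informal approximations), so no changes are needed.
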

\begin{proof}
 That the map is linear is easy to be proved. To prove the continuity, 
take an arbitrary $S\in\mathcal{B}+\mathbb{R}^d$. By Lemma \ref{lem1_frequency}, the set
\begin{align*}
     \mathcal{F}=\{T\in\mathcal{B}+\mathbb{R}^d\mid \xi(T)(S)\neq 0\}
\end{align*}
is a finite set and then the sum $M_S=\sum_{T\in \mathcal{B}+\mathbb{R}^d} \xi(T)(S)=\sum_{T\in \mathcal{F}}\xi(T)(S)$ is a real positive number. For arbitrary $\epsilon>0$, set $\delta=\epsilon/M_S$ such that whenever $|v(T)-v'(T)|<\delta$ for any $T\in \mathcal{F}$, we have
\begin{align*}
    \left|\xi(v)(S)-\xi(v')(S)\right|&=\left|\sum_{T\in\mathcal{B}+\mathbb{R}^d}v(T)\xi(T)(S)-\sum_{T\in\mathcal{B}+\mathbb{R}^d}v'(T)\xi(T)(S)\right|\\
    &=\left|\sum_{T\in\mathcal{F}}(v(T)\xi(T)(S)-v'(T)\xi(T)(S)\right|\\
    &\leqq\sum_{T\in\mathcal{F}}|v(T)-v'(T)|\xi(T)(S)<\epsilon
\end{align*}
\end{proof}

\begin{cor}
 \begin{align*}
  \xi^n(v)=\sum_{T\in\mathcal{B}+\mathbb{R}^d}v(T)\xi^n(T).
 \end{align*}
\end{cor}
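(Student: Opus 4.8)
The statement to prove is the Corollary asserting $\xi^n(v)=\sum_{T\in\mathcal{B}+\mathbb{R}^d}v(T)\xi^n(T)$.

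The plan is to argue by induction on $n$, using the linearity and weak continuity of $\xi$ established in the preceding lemma. For $n=1$ the identity is just the definition of $\xi(v)$. For the inductive step, I would write $v=\sum_{T\in\mathcal{B}+\mathbb{R}^d}v(T)\,\delta_T$, understood as a convergent sum in the weak topology (each coordinate functional $\tau_S$ sees only finitely many nonzero terms, in fact at most one, namely $T=S$). Then apply $\xi^n=\xi\circ\xi^{n-1}$: by the inductive hypothesis applied to the first application and linearity plus continuity to pass $\xi^{n-1}$ through the (weakly convergent) sum, one gets $\xi^n(v)=\xi^{n-1}\bigl(\sum_T v(T)\xi(T)\bigr)=\sum_T v(T)\,\xi^{n-1}(\xi(T))=\sum_T v(T)\,\xi^n(T)$. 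The only subtlety is justifying the interchange of $\xi^{n-1}$ with the infinite sum; this is legitimate because $\xi^{n-1}$ is weakly continuous (a composition of weakly continuous maps) and linear, so it commutes with limits of nets of partial sums, and for each fixed $S$ the net $\sum_{T\in F'}v(T)\xi^{n-1}(\xi(T))(S)$ is eventually constant by Lemma~\ref{lem1_frequency} applied iteratively. Equivalently, and perhaps cleaner, one fixes $S\in\mathcal{B}+\mathbb{R}^d$ and evaluates everything at $S$, reducing the whole thing to a finite-sum identity.

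Concretely, I would proceed as follows. First, fix $S\in\mathcal{B}+\mathbb{R}^d$ and note $\xi^n(v)(S)=\tau_S(\xi^n(v))$. Since $\xi$ is weakly continuous and linear, so is $\xi^n$, and writing $v$ as the weakly convergent sum $\sum_{T}v(T)\delta_T$ gives $\xi^n(v)=\sum_T v(T)\xi^n(\delta_T)=\sum_T v(T)\xi^n(T)$ (identifying $\xi^n(T)$ with $\xi^n(\delta_T)$, consistent with the notation $\xi(T)=\xi(\delta_T)$). To see the sum $\sum_T v(T)\xi^n(T)(S)$ is finite, one needs that $\xi^n(T)(S)\neq 0$ for only finitely many $T$; this follows from $n$-fold application of Lemma~\ref{lem1_frequency}, since $\xi^n(T)(S)\neq 0$ forces the existence of a finite chain $T=T_0,T_1,\ldots,T_n=S$ with $\xi(T_{k})(T_{k+1})\neq 0$ at each step, and there are only finitely many such chains. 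Hence the evaluation at $S$ is a genuine finite sum and commutes with $\xi^{n-1}$ trivially.

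The main obstacle — really the only place care is needed — is the bookkeeping around infinite sums in the weak topology: one must confirm that $v=\sum_T v(T)\delta_T$ converges weakly (clear, since the $S$-th coordinate of the partial sum over $F'$ equals $v(S)$ as soon as $S\in F'$), that $\xi^{n}$ is weakly continuous so it may be applied term by term (clear from the lemma by composition), and that the resulting sum $\sum_T v(T)\xi^n(T)$ is itself weakly convergent so the right-hand side makes sense (clear from the finite-chain argument above). None of these is deep; the corollary is essentially a formal consequence of linearity, continuity, and the local finiteness provided by Lemma~\ref{lem1_frequency}. I would therefore keep the written proof to a few lines: state the induction, quote the lemma for continuity, quote Lemma~\ref{lem1_frequency} (iterated) for local finiteness of $\{T\mid \xi^n(T)(S)\neq 0\}$, and conclude.
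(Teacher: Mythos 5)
Your argument is correct and is exactly the route the paper intends: the corollary is stated as an immediate consequence of the preceding lemma (linearity and weak continuity of $\xi$) together with the local finiteness from Lemma \ref{lem1_frequency}, and the paper offers no further proof. Your write-up merely makes explicit the bookkeeping (weak convergence of $\sum_T v(T)\delta_T$, iterated finite chains for $\{T\mid \xi^n(T)(S)\neq 0\}$) that the paper leaves implicit, so there is nothing to change.
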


For the usual theory for tilings, the cutting-off operation is important.
We define the cutting-off operation for weighted patches.
For a $v\in\WP_{\mathcal{B}}$ and a $K\subset\mathbb{R}^d$ we define the cut-off $v\sci K$ of
$v$ by $K$ via
\begin{align*}
 v\sci K(T)=
\begin{cases}
 v(T) &\text{ if $\supp T\subset K$}\\
0 &\text{ otherwise.}
\end{cases}
\end{align*}

\begin{lem}
 For any $K\subset\mathbb{R}^d$, the map
\begin{align*}
 \WP_{\mathcal{B}}\ni v\mapsto v\sci K\in\WP_{\mathcal{B}}
\end{align*}
is linear and continuous with respect to the weak topology.
\end{lem}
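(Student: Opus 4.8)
The plan is to verify linearity directly from the defining formula and then reduce continuity to the universal property of the weak topology, exactly as was done for the maps $\xi$ and for $\xi(v)$ earlier in this section.

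First I would check linearity. Given $v,v'\in\WP_{\mathcal{B}}$ and $a,b\in\mathbb{R}$, evaluate $(av+bv')\sci K$ at an arbitrary $T\in\mathcal{B}+\mathbb{R}^d$. If $\supp T\subset K$, then by definition $(av+bv')\sci K(T)=(av+bv')(T)=av(T)+bv'(T)=a(v\sci K)(T)+b(v'\sci K)(T)$; if $\supp T\not\subset K$, then both sides are $0$. Since $T$ is arbitrary, $(av+bv')\sci K=a(v\sci K)+b(v'\sci K)$, so the map is linear.

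Next, for continuity, recall that the weak topology on $\WP_{\mathcal{B}}$ is the initial topology for the family of evaluation maps $\tau_S\colon v\mapsto v(S)$, $S\in\mathcal{B}+\mathbb{R}^d$. Hence a map $\WP_{\mathcal{B}}\to\WP_{\mathcal{B}}$ is continuous if and only if its composition with every $\tau_S$ is continuous. Fix $S\in\mathcal{B}+\mathbb{R}^d$ and consider $\tau_S\circ(v\mapsto v\sci K)$, i.e.\ $v\mapsto (v\sci K)(S)$. If $\supp S\subset K$, this map is just $\tau_S$, which is continuous by the very definition of the weak topology. If $\supp S\not\subset K$, this map is identically $0$, hence constant, hence continuous. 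In either case the composition is continuous, and therefore $v\mapsto v\sci K$ is continuous with respect to the weak topology.

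There is no genuine obstacle here; the only point requiring care is to invoke the initial-topology characterization of continuity rather than attempting a direct $\varepsilon$-style argument, and to note that the two cases ($\supp S\subset K$ or not) exhaust all possibilities for a given evaluation coordinate $S$.
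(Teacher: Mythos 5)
Your proof is correct and follows essentially the same route as the paper: a coordinate-wise check splitting into the two cases $\supp S\subset K$ (where the composed map is just the evaluation $\tau_S$) and $\supp S\not\subset K$ (where it is identically zero). Your explicit appeal to the initial-topology characterization is just a cleaner formalization of the paper's ``$v(T)\approx v'(T)$'' argument, and the linearity check matches what the paper leaves as easy.
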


\begin{proof}
 Take a $T\in\mathcal{B}+\mathbb{R}^d$ and fix it.
If $\supp T\not\subset K$ then $v\sci K(T)=0$ for any $v$. Otherwise, if $v(T)\approx v'(T)$,
then $v\sci K(T)=v(T)\approx v'(T)=v'\sci K(T)$.
\end{proof}

Finally,  we state the main result in this section, which claims that the right Perron--Frobenius eigenvector gives
the frequency for the fixed point for $\xi$.

\begin{prop}\label{prop_frequency_weighted-substi}
 Let $v$ be such that
\begin{enumerate}
\item $\SP(v)$ is a tiling in $\mathbb{R}^d$,
 \item $v(T)=1$ for any $T\in\SP(v)$,
\item $\xi^k(v)=v$ for some $k>0$, and
\item the substitution matrix for $v$ is primitive.
\end{enumerate}
Take a Perron--Frobenius left eigenvector $l=(l_1,l_2,\ldots ,l_{\kappa})$ and
right eigenvector
\begin{align*}
 r=
\begin{pmatrix}
 r_1\\r_2\\\vdots\\r_{\kappa}
\end{pmatrix}
\end{align*}
for the substitution matrix for $\xi$, 
and assume $l_i$ coincides with the Lebesgue measure $\vol(\supp T_i)$
for $\supp T_i$ for each $i$, and
$l$ and $r$ are normalized in the sense that $\sum_{i=1}^{\kappa}l_ir_i=\sum_{i=1}^{\kappa}r_i=1$.

Then $r$ describes the frequency for $v$, which is regarded as a tiling: for any van Hove sequence 
$(A_n)_{n=1,2,\ldots}$ and $x_1,x_2,\ldots\in\mathbb{R}^d$, we have a convergence
\begin{align*}
 \lim_{n\rightarrow \infty}\frac{1}{\vol(A_n)}\tau_{T_i}(v\sci (A_n+x_n))=r_i,
\end{align*}
which is uniform for $(x_n)$.
\end{prop}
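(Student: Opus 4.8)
The plan is to reduce the statement to a standard patch-frequency result for primitive substitution tilings by exploiting the hypothesis that the weights of $v$ are all equal to $1$. The key point is that on the fixed point $v$, the weighted substitution $\xi$ acts, modulo the overlapping phenomenon, like an ordinary inflation: iterating $\xi^k(v)=v$ gives $\xi^{nk}(v)=v$, and unravelling via the Corollary, $v=\xi^{nk}(v)=\sum_{T\in\SP(v)}\xi^{nk}(\delta_T)$, which exhibits $v$ as a superposition of the ``super-tiles'' $\xi^{nk}(\delta_T)$ sitting at the positions $T\in\SP(v)$. Since $\SP(v)$ is a tiling and $v$ is consistent (so each $\SP(\xi^m(\delta_T))$ is a patch), the support patches of these super-tiles tile $\R^d$ with controlled overlaps on their boundaries. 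First I would record the bookkeeping identity: for a large region $A_n+x_n$, the quantity $\tau_{T_i}(v\sci(A_n+x_n))$ counts, with the accumulated weights, the number of $T_i$-translates inside $A_n+x_n$; and this count is, up to boundary error, $\sum_j (M^{nk})_{ij}$ times the number of level-$nk$ super-tiles of type $j$ whose support lies in $A_n+x_n$.

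Second, I would set up the two-scale counting argument familiar from ordinary substitution tilings (cf. \cite{Baake-Grimm:13}, \cite{Solomyak:97}). Fix a large $n$ and apply $\xi^{nk}$ to $v$. The supports $\phi^{nk}(\supp T)$ for $T\in\SP(v)$ form a tiling of $\R^d$ by $\kappa$ shapes scaled by $\phi^{nk}$; by primitivity and the van Hove property, the number $N_j(A_n+x_n)$ of such super-tiles of type $j$ essentially contained in a van Hove set $A_n+x_n$ satisfies $N_j(A_n+x_n)/\vol(A_n)\to f_j/\det\phi^{nk}$ for the super-tile frequencies $f_j$, uniformly in $x_n$, with an error governed by $\vol(\partial^{r}(A_n))/\vol(A_n)\to 0$, where $r$ is the super-tile diameter. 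Combining with the first step,
\begin{align*}
 \frac{1}{\vol(A_n)}\tau_{T_i}(v\sci(A_n+x_n))
 = \sum_{j=1}^{\kappa}(M^{nk})_{ij}\,\frac{N_j(A_n+x_n)}{\vol(A_n)} + (\text{boundary error}).
\end{align*}
Letting $n\to\infty$ and using Perron--Frobenius asymptotics $(M^{nk})_{ij}\sim \lambda^{nk} r_i l_j/\langle l,r\rangle = \lambda^{nk} r_i l_j$ (by the normalization $\sum_i l_i r_i=1$), together with $\det\phi^{nk}=\lambda^{nk}$ (which holds because $l_i=\vol(\supp T_i)$ forces the volume-balance relation $\det\phi\cdot l_i=\sum_j M_{ij}l_j=\lambda l_i$, so $\det\phi=\lambda$), the $\lambda^{nk}$ factors cancel and one is left with $r_i\sum_j l_j f_j$. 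A separate computation with $i$ summed against $l_i$ (i.e.\ comparing total volume) pins down $\sum_j l_j f_j=1$, giving the limit $r_i$.

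Third, I would make the uniform convergence and the interchange of limits precise. The subtlety is that the decomposition depends on $n$, so I cannot literally take $n\to\infty$ inside; instead I fix $n$, take the van Hove limit first to get a value $g_i^{(n)}$ (this is where one needs that the super-tile frequencies $f_j$ exist and are uniform — which follows from primitivity plus finite local complexity of $\SP(v)$, itself a consequence of $\SP(v)$ being a fixed tiling of a consistent substitution), then show $g_i^{(n)}$ is independent of $n$ by the self-consistency $\xi^k(v)=v$, and finally evaluate it as above. The main obstacle is precisely this point: one must verify that $\SP(v)$ has finite local complexity and linearly repetitive-type control so that the super-tile patch frequencies exist uniformly, and that the weighted counting $\tau_{T_i}(v\sci K)$ agrees with the honest geometric tile count up to a boundary term — the overlaps inherent in the weighted setting mean a tile $T_i$ straddling super-tile boundaries may receive contributions from several super-tiles, and one must check these extra contributions are absorbed into the $\vol(\partial^r A_n)/\vol(A_n)$ error. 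Everything else is the standard Perron--Frobenius-plus-van Hove bookkeeping.
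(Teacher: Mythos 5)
There is a genuine gap at the core of your argument: in Step 2 you assume that the ``super-tile frequencies'' $f_j$ exist, i.e.\ that $N_j(A_n+x_n)/\vol(A_n)$ converges uniformly in $x_n$. But the level-$nk$ super-tiles are in bijection with the tiles of $\SP(v)$ (one super-tile $\xi^{nk}(\delta_T)$ per $T\in\SP(v)$), so the existence of uniform super-tile frequencies is exactly the uniform tile-frequency statement the proposition asserts; invoking it is circular. Your attempted justification --- ``primitivity plus finite local complexity of $\SP(v)$, itself a consequence of $\SP(v)$ being a fixed tiling of a consistent substitution'' --- does not hold: FLC is not a hypothesis of the proposition and does not follow automatically (the paper treats FLC as an additional assumption in Section 3 and even envisages non-FLC situations), and even granting FLC, the unique-ergodicity-type theorem you would cite is, in this weighted/overlapping setting, precisely what has to be proved. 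A secondary unjustified step is $\det\phi=\lambda$: in the abstract weighted setting of the proposition there is no relation imposed between $\SR(\xi(T))$ and $\phi(\supp T)$, and the identification of $\SR(\xi^{nk}(\delta_T))$ with $\phi^{nk}(\supp T)$ that your geometric counting uses is not available; all one may use is that $l_i=\vol(\supp T_i)$ is a left eigenvector, which gives $\sum_l \vol(T_l)(M^{km})_{lj}=\lambda^{km}\vol(T_j)$, and a bound $L_m$ on the diameters of the regions $\SR(\xi^{km}(T_i))$.

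The paper's proof avoids the circularity entirely and you should restructure along those lines: write $v=\xi^{km}(v)=\sum_{T\in\SP(v)}\xi^{km}(T)$ for a \emph{fixed} $m$, split according to whether $\SR(\xi^{km}(T))\subset A_n+x_n$ or not, and control the exceptional part and the volume normalization by boundary terms $\vol(\partial^{L_m}(A_n+x_n))/\vol(A_n)$ (using that the weights satisfy $\sum_{S\in\SP(v)}\xi^{km}(S)(T)=v(T)=1$, which is what absorbs the straddling contributions you worry about). The key point you are missing is the convex-combination identity
\begin{align*}
\frac{\sum_{T}\tau_{T_i}(\xi^{km}(T))}{\sum_{S}\vol(\xi^{km}(S))}-r_i
=\sum_{T}\frac{\vol(\xi^{km}(T))}{\sum_{S}\vol(\xi^{km}(S))}\left(\frac{\tau_{T_i}(\xi^{km}(T))}{\vol(\xi^{km}(T))}-r_i\right),
\end{align*}
where the sums run over super-tiles inside $A_n+x_n$: since $\tau_{T_i}(\xi^{km}(T_j))/\vol(\xi^{km}(T_j))=(M^{km})_{ij}/(\lambda^{km}\vol(T_j))\to r_i$ uniformly in $j$ by Perron--Frobenius theory, one never needs to know how many super-tiles of each type occur --- only that they account for all of $A_n+x_n$ up to a van Hove-negligible boundary. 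Choosing $m$ large first and then $n$ large gives the uniform convergence directly, with no appeal to FLC, repetitivity, or pre-existing frequencies.
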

The proof will be given in the appendix.


\subsection{The theory of overlapping substitutions}
A pre-overlapping substitution is a triple $(\mathcal{B},\phi,\rho)$ where
\begin{itemize}
\item $\mathcal{B}=\{T_1,T_2,\ldots ,T_{\kappa}\}$ is a finite set of tiles in $\mathbb{R}^d$,
\item $\phi\colon\mathbb{R}^d\rightarrow\mathbb{R}^d$ is a linear map of which eigenvalues are all greater than 1 in modulus, and
\item $\rho$ is a map defined on $\mathcal{B}$ such that $\rho(T_i)$ is a finite patch consisting of
translates of elements of $\mathcal{B}$.
\end{itemize}

The set $\mathcal{B}$ is called the alphabet for the pre-overlapping substitution and
each $T_i$ in $\mathcal{B}$ is called a proto-tile. For a proto-tile $T_i$ and $x\in\mathbb{R}^d$, we set
\begin{align*}
     \rho(T_i+x)=\rho(T_i)+\phi(x).
\end{align*}
For a pattern $\mathcal{P}$ consisting of translates of proto-tiles, we set
\begin{align*}
    \rho(\mathcal{P})=\bigcup_{T\in\mathcal{P}}\rho(T).
\end{align*}
Note that each $\rho(T)$ is already defined and $\rho(\mathcal{P})$ is a new pattern consisting of
translates of proto-tiles.
We use the same symbol $\rho$ for an pre-overlapping substitution and a map that sends a pattern to
another pattern. For a pattern $\mathcal{P}$ consisting of translates of proto-tiles, we can apply $\rho$ multiple times and
obtain $\rho^n(\mathcal{P})$ for $n=1,2,\ldots$.
If, for any $n>0$ and
proto-tile $T$, the
pattern $\rho^n(T)$ is a
patch (there are no illegal overlaps), then we say $\rho$ is consistent and call $\rho$ an overlapping substitution.

To an overlapping substitution $\rho$, we can often take a patch $\mathcal{P}$ consisting of translates of proto-tiles such that for each $n=1,2,\ldots$, the set $\rho^n(\mathcal{P})$ is a patch, that is, there are no
illegal overlaps.
Moreover, we often have that, for some $k>0$,
\begin{align*}
       \lim_{n\rightarrow\infty}\rho^{kn}(\mathcal{P})
\end{align*}
converges to a tiling
and so we have a fixed point for an overlapping substitution $\rho^k$.

For an expanding overlapping substitution,
the above process is
almost always possible,
given a primitivity condition is satisfied.
Here, we say that an overlapping substitution $(\mathcal{B},\phi,\rho)$ is expanding if
for each $P\in\mathcal{B}$, we have
\begin{align*}
\supp\rho(P)\supset\phi(\supp P).
\end{align*}
For an expanding overlapping substitution $\rho$, if we can find a $P\in\mathcal{B}$ 
and an $x\in\mathbb{R}^d$ such that
\begin{align*}
 P+x\in\rho^k(P+x),
\end{align*}
and $P+x$ is in the ``interior'' of the right-hand side,
then by an standard argument, we can construct a tiling
\begin{align*}
 \mathcal{T}=\lim_{n\rightarrow\infty}\rho^{kn}(P+x),
\end{align*}
which is a fixed point for $\rho$: we have $\rho^k(\mathcal{T})=\mathcal{T}$.

To an overlapping substitution $(\mathcal{B},\phi,\rho)$, we can associate a weighted substitution
$(\mathcal{B},\phi,\tilde{\rho})$, as follows.
The alphabet and the expansion map are the same. For $T_i\in\mathcal{A}$, the weighted patch $\tilde{\rho}(T_i)$
is a map whose support is $\rho(T_i)$ and the weights are defined via
\begin{align*}
       \tilde{\rho}(T_i)(S)=\frac{\vol(\phi(\supp T_i)\cap\supp S)}{\vol (\supp S)},
\end{align*}
where $\vol$ denotes the Lebesgue measure and $S\in\rho(T_i)$.
{\color{red}Note that this idea of substitution matrix coincides the one of (\ref{DoteraMatrix})}, although we only consider translations and not all isometries in $\mathbb{R}^d$. For an overlapping substitution $\rho$, its
substitution matrix is defined as the one for the associated substitution $\tilde{\rho}$.
We will prove that the tile frequencies for the tilings generated by an overlapping substitution are given by the
Perron--Frobenius eigenvector for the substitution matrix.


We prove two results on the meaning
of left and right Perron--Frobenius
eigenvectors for the substitution matrix associated with an overlapping substitution, given that the matrix is primitive. The result for the left Perron--Frobenius eigenvector is easier.

\begin{prop}\label{prop_left_eigenvector}
    Let $\rho$ be an expanding overlapping substitution with a primitive substitution matrix. Then the left eigenvector for the Perron--Frobenius eigenvalue $\beta$ is a multiple of the vector consisting of Lebesgue measures of proto-tiles
    \begin{align*}
        (\vol(T_1),\vol(T_2),\ldots ,\vol(T_{\kappa})),
    \end{align*}
    and, moreover, we have
    \begin{align*}
        \beta=\det\phi.
    \end{align*}
\end{prop}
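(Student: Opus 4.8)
The plan is to compute the action of the substitution matrix $M_{\tilde\rho}$ on the vector $\bm{\ell}=(\vol(T_1),\ldots,\vol(T_\kappa))$ and show that $\bm\ell M_{\tilde\rho}=(\det\phi)\,\bm\ell$, i.e. $\bm\ell$ is a left eigenvector with eigenvalue $\det\phi$. Since $\rho$ is expanding and primitive, $\bm\ell$ is a strictly positive vector, and by the Perron--Frobenius theorem the positive left eigenvector is unique up to scalar and its eigenvalue is the Perron--Frobenius root $\beta$; hence $\beta=\det\phi$ and the left PF eigenvector is a multiple of $\bm\ell$.

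The key computation is the $j$-th entry of $\bm\ell M_{\tilde\rho}$, namely $\sum_{i=1}^\kappa \vol(T_i)\,\tau_{T_i}(\tilde\rho(T_j))$. Unwinding the definitions, $\tau_{T_i}(\tilde\rho(T_j))=\sum_{x}\tilde\rho(T_j)(T_i+x)$, where the sum runs over those $x$ with $T_i+x\in\rho(T_j)$, and $\tilde\rho(T_j)(T_i+x)=\vol\bigl(\phi(\supp T_j)\cap\supp(T_i+x)\bigr)/\vol(\supp T_i)$. Therefore
\begin{align*}
 \sum_{i=1}^\kappa \vol(T_i)\,\tau_{T_i}(\tilde\rho(T_j))
 =\sum_{S\in\rho(T_j)} \vol\bigl(\phi(\supp T_j)\cap\supp S\bigr).
\end{align*}
Now $\rho(T_j)$ is a \emph{patch}, so the interiors of its tiles are pairwise disjoint; hence the sets $\phi(\supp T_j)\cap\supp S$ for $S\in\rho(T_j)$ overlap only on a null set, and the sum above equals $\vol\bigl(\phi(\supp T_j)\cap\bigcup_{S\in\rho(T_j)}\supp S\bigr)=\vol\bigl(\phi(\supp T_j)\cap\supp\rho(T_j)\bigr)$. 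Since $\rho$ is expanding, $\supp\rho(T_j)\supset\phi(\supp T_j)$, so this is just $\vol(\phi(\supp T_j))=|\det\phi|\cdot\vol(T_j)$. (As $\phi$ is expanding, $\det\phi\neq 0$; taking $\phi$ orientation-preserving, or absorbing the sign, gives $\det\phi>0$, so $|\det\phi|=\det\phi$.) This yields exactly $\bm\ell M_{\tilde\rho}=(\det\phi)\,\bm\ell$.

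The main obstacle — really the only nontrivial point — is making rigorous the passage from $\sum_{S\in\rho(T_j)}\vol(\phi(\supp T_j)\cap\supp S)$ to $\vol(\phi(\supp T_j)\cap\supp\rho(T_j))$; this is precisely where the hypothesis that $\rho$ is an \emph{overlapping substitution} (i.e. consistent, so that $\rho(T_j)$ has no illegal overlaps) is used, via the fact that a countable union of tiles in a patch is measure-disjoint because each $\supp S=\overline{(\supp S)^\circ}$ and the interiors are disjoint, so $\vol(\supp S\cap\supp S')=\vol((\supp S)^\circ\cap(\supp S')^\circ)=0$ for $S\neq S'$ in the patch. Everything else is bookkeeping with the definitions of $\tau_{T_i}$, $\tilde\rho$, and the substitution matrix, together with the invocation of Perron--Frobenius for the primitive matrix $M_{\tilde\rho}$ to upgrade ``positive eigenvector'' to ``PF eigenvector and PF root''.
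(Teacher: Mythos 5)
Your proposal is correct and follows essentially the same route as the paper, which simply writes the $(i,j)$ entry of the substitution matrix as $\sum_{x:\,T_i+x\in\rho(T_j)}\vol(\supp(T_i+x)\cap\phi(\supp T_j))/\vol(\supp T_i)$ and then invokes Perron--Frobenius theory. You merely spell out the steps the paper leaves implicit: pairing with the volume vector, using consistency (measure-disjointness of the patch $\rho(T_j)$) and the expanding condition $\supp\rho(T_j)\supset\phi(\supp T_j)$ to get $\det\phi\cdot\vol(T_j)$, and using primitivity to identify the positive left eigenvector with the Perron--Frobenius one.
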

\begin{proof}
    The $(i,j)$-element of the substitution matrix is
    \begin{align*}
        \sum_{\substack{x\in\mathbb{R}^d,\\T_i+x\in\rho(T_j)}}\frac{\vol(\supp(T_i+x)\cap\phi(\supp T_j))}{\vol(\supp T_i)}.
    \end{align*}
    The claim follows from the Perron--Frobenius theory.
\end{proof}

Given an overlapping  substitution $\rho$ and its fixed point $\mathcal{T}$ (that is, for some
$k>0$, we have $\rho^k(\mathcal{T})=\mathcal{T}$), we define an weighted pattern $v$ via
\begin{align*}
 v(T)=
\begin{cases}
 1 &\text{ if $T\in\mathcal{T}$}\\
0 & \text{ otherwise}.
\end{cases}
\end{align*}

Then we can prove that $\tilde{\rho^k}(v)=v$. By Proposition \ref{prop_frequency_weighted-substi} and Proposition \ref{prop_left_eigenvector}, we have proved the following:

\begin{theorem}\label{thm_uniform_tile_freq}
 Let $\rho$ be an expanding overlapping subsitutiion and $r=(r_1,r_2,\ldots, r_{\kappa})$ be
a right Perron--Frobenius eigenvector which is normalized in the sense of 
Proposition \ref{prop_frequency_weighted-substi}. Then $r$ gives the frequency for any fixed points for $\rho$.
In particular, for any van Hove sequence $(A_n)$ and elements $x_n\in\mathbb{R}^d$, we have
a convergence
\begin{align*}
      \lim_{n\rightarrow\infty}\frac{\card\{x\in A_n+x_n\mid T_i+x\in\mathcal{T}\}}{\vol{A_n}}=r_i,
\end{align*}
which is uniform for $(x_n)$.
\end{theorem}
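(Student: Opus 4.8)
The plan is to deduce the final theorem by combining the three ingredients already set up: Proposition~\ref{prop_left_eigenvector}, Proposition~\ref{prop_frequency_weighted-substi}, and the passage from an overlapping substitution $\rho$ to its associated weighted substitution $\tilde\rho$. The only genuinely new verification needed is the claim that, for a fixed point $\mathcal{T}$ of $\rho^k$, the indicator weighted pattern $v$ (with $v(T)=1$ for $T\in\mathcal{T}$ and $0$ otherwise) satisfies $\widetilde{\rho^k}(v)=v$; once this is in hand, the four hypotheses of Proposition~\ref{prop_frequency_weighted-substi} are checked one by one and the stated limit drops out, with the normalization fixed by Proposition~\ref{prop_left_eigenvector}.

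First I would record that $\SP(v)=\mathcal{T}$ is a tiling and $v\equiv 1$ on its support, giving hypotheses (1) and (2). Next I would establish $\widetilde{\rho^k}(v)=v$. The key identity is that the weights in $\tilde\rho$ are exactly the normalized overlap volumes, so that for each proto-tile $T_j$ one has $\sum_{S}\tilde\rho(T_j)(S)\,\vol(\supp S)=\vol(\phi(\supp T_j))$ with the sum over $S\in\rho(T_j)$; summing the Lebesgue-measure identity $\vol(\phi(\supp T_j))=\sum_{S}\vol(\phi(\supp T_j)\cap\supp S)$, which holds because $\rho(T_j)$ is a patch and, by the expanding hypothesis, $\phi(\supp T_j)\subset\supp\rho(T_j)$ up to measure zero. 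Applying this tilewise to the tiling $\mathcal{T}=\rho^k(\mathcal{T})$: for a fixed tile $U\in\mathcal{T}+\mathbb{R}^d$, the value $\widetilde{\rho^k}(v)(U)=\sum_{T\in\mathcal{T}}\widetilde{\rho^k}(T)(U)$ is a sum of overlap fractions $\vol(\phi^k(\supp T)\cap\supp U)/\vol(\supp U)$ over those finitely many $T\in\mathcal{T}$ with $U\in\rho^k(T)$; since the supports $\phi^k(\supp T)$ for $T\in\mathcal{T}$ tile $\mathbb{R}^d$ up to measure zero (as $\rho^k(\mathcal{T})$ is a patch with support $\mathbb{R}^d$ and $\rho^k$ is expanding), these fractions sum to $\vol(\supp U)/\vol(\supp U)=1$ when $U\in\mathcal{T}$ and to $0$ otherwise. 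Hence $\widetilde{\rho^k}(v)=v$, giving hypothesis (3); hypothesis (4) is the assumed primitivity of the substitution matrix of $\rho$, which by definition is that of $\tilde\rho$.

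With all four hypotheses verified, Proposition~\ref{prop_frequency_weighted-substi} applies to $v$ (using the iterate $\widetilde{\rho^k}$ in place of $\xi^k$), and Proposition~\ref{prop_left_eigenvector} identifies the left Perron--Frobenius eigenvector with $(\vol(T_1),\ldots,\vol(T_\kappa))$, so the normalization $\sum_i l_i r_i=1$ is precisely the one in Proposition~\ref{prop_frequency_weighted-substi}. This yields
\begin{align*}
    \lim_{n\to\infty}\frac{1}{\vol(A_n)}\,\tau_{T_i}\bigl(v\sci(A_n+x_n)\bigr)=r_i,
\end{align*}
uniformly in $(x_n)$. Finally I would translate $\tau_{T_i}(v\sci(A_n+x_n))$ back into a counting quantity: since $v$ is the $0$--$1$ indicator of $\mathcal{T}$, one has $\tau_{T_i}(v\sci(A_n+x_n))=\card\{x\mid T_i+x\in\mathcal{T},\ \supp(T_i+x)\subset A_n+x_n\}$, which differs from $\card\{x\in A_n+x_n\mid T_i+x\in\mathcal{T}\}$ only by boundary effects controlled by the van Hove property of $(A_n)$; after a harmless re-centering this gives the displayed formula in the theorem.

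I expect the main obstacle to be the proof of $\widetilde{\rho^k}(v)=v$, and specifically the careful bookkeeping that the supports $\{\phi^k(\supp T):T\in\mathcal{T}\}$ cover $\mathbb{R}^d$ with overlaps of measure zero — this uses both that $\mathcal{T}$ is a tiling and that $\rho^k(\mathcal{T})=\mathcal{T}$ is a patch whose support is all of $\mathbb{R}^d$, together with the expanding condition $\supp\rho(P)\supset\phi(\supp P)$ to guarantee no ``missing'' region. The van Hove boundary estimate at the end is routine but should be stated; everything else is a direct invocation of the two propositions.
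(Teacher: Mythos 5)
Your overall route is the same as the paper's: the paper's proof consists precisely of introducing the indicator pattern $v$ of the fixed tiling, asserting that it is fixed by (an iterate of) the associated weighted substitution, and then invoking Propositions \ref{prop_frequency_weighted-substi} and \ref{prop_left_eigenvector}. Your verification of hypotheses (1), (2), (4), the normalization via Proposition \ref{prop_left_eigenvector}, and the final van Hove comparison between $\tau_{T_i}(v\sci(A_n+x_n))$ and the count of reference points are all in order, and in fact supply details the paper leaves implicit.

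The one genuine gap is the identification, implicit in your phrase ``using the iterate $\widetilde{\rho^k}$ in place of $\xi^k$'', of the weighted substitution $\widetilde{\rho^k}$ associated to the overlapping substitution $\rho^k$ with the $k$-fold composite $(\tilde\rho)^k$ of the weighted substitution associated to $\rho$. These are different maps in general: take the single prototile $T=[0,1]$, $\phi(x)=2x$ and $\rho(T)=\{[-\tfrac12,\tfrac12],[\tfrac12,\tfrac32],[\tfrac32,\tfrac52]\}$; then $(\tilde\rho)^2(T)$ assigns weight $\tfrac14$ to the tile $[-\tfrac32,-\tfrac12]$, while $\widetilde{\rho^2}(T)$ assigns it weight $0$, since that tile does not meet $\phi^2(\supp T)=[0,4]$. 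Proposition \ref{prop_frequency_weighted-substi} (see its proof in the appendix, which uses $\tau_{T_i}(\xi^{km}(T_j))=(M^{km})_{ij}$) requires $v$ to be fixed by an iterate of the \emph{same} weighted substitution $\xi$ whose primitive matrix $M$ supplies the eigenvectors $l$ and $r$; with $\xi=\tilde\rho$ and $M$ the substitution matrix of $\rho$ named in the theorem, the needed hypothesis is $(\tilde\rho)^k(v)=v$, not $\widetilde{\rho^k}(v)=v$, and the substitution matrix of $\widetilde{\rho^k}$ need not equal $M^k$ entrywise, so one cannot simply swap the two. The repair is your own argument run one application of $\rho$ at a time: for each $j$ the pattern $\rho^j(\mathcal{T})$ is a tiling (in the setting where the fixed point arises as an increasing limit of consistent patches), and your volume bookkeeping (expanding condition plus measure-zero overlaps of distinct tiles, plus the fact that the sets $\phi(\supp T)$, $T\in\rho^j(\mathcal{T})$, cover $\mathbb{R}^d$ with measure-zero overlaps because $\phi$ is an invertible linear map) shows that $\tilde\rho$ sends the indicator of $\rho^j(\mathcal{T})$ to the indicator of $\rho^{j+1}(\mathcal{T})$; composing gives $(\tilde\rho)^k(v)=v$. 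With that substitution your proof matches the paper's.
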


\textcolor{blue}{
We will use this uniform convergence later to prove the unique ergodicity for the corresponding dynamical system in the next theorem.
In particular, we see that, for repetitive fixed points for the substitution, patch frequencies converge uniformly:}
\begin{theorem}\label{thm_uniform_patch_freq}
    Assume a fixed point $\mathcal{{T}}$ for $\rho$ is repetitive. Then the patch frequencies converge uniformly. That is, 
    for any van Hove sequence $(A_n)$,
    elements $x_n\in\mathbb{R}^d$
    and a patch $\mathcal{{P}}$, the
    limit
    \begin{align*}
    \lim_{n\rightarrow\infty}\frac{\card\{x\in A_n+x_n\mid \mathcal{P}+x\subset\mathcal{T}\}}{\vol A_n}
    \end{align*}
    converges and is independent of $(A_n)$ and $x_n$.
    Moreover, if we fix $(A_n)$ and $\mathcal{P}$, the limit is uniform for $(x_n)$.
    In particular, the corresponding dynamical system for the repetitive fixed point is uniquely ergodic.
\end{theorem}

\begin{proof}
    For a $P\in\mathcal{B}$
    and an $x\in\mathbb{R}^d$, we define the representative point for $P+x$ via
    \begin{align*}
        c(P+x)=x.
    \end{align*}
    We may assume that $0\in\mathbb{R}^d$ is in the interior of each prototile $P\in\mathcal{{B}}$.

    For each $L>0$, we
    define an additional label for tiles in
    $\mathcal{T}$ by 
    looking at the patterns around the tiles inside the ball of raduis $L$.
    For  a $T\in\mathcal{T}$, 
    we will look at
\begin{align*}\mathcal{T}\sqcap B(c(T),L),
    \end{align*}
    but we want to identify two such patches that are translationally equivalent.
    The
    corresponding ``coloured'' tile $\tilde{T}$ is the tile $T$ with additional label or colour
    \begin{align*}
        (\mathcal{T}-c(T))\sqcap B_L.
    \end{align*}

    The new alphabet for the coloured tiling is
    \begin{align*}
        \tilde{\mathcal{B}}=\{(T-c(T),(\mathcal{T}-c(T))\sqcap B_L)\mid T\in\mathcal{T}\}
        =\{\tilde{T}-c(T)\mid T\in\mathcal{T}\}.
    \end{align*} 
    We define a substitution $\tilde{\rho}$ via
    \begin{align*}
        \tilde{\rho}(\tilde{T}-c(T))=
        \{\tilde{S}-\phi(c(T))\mid S\in\rho(T)\}.
    \end{align*}
    This is well-defined.
    This means that the image for $\tilde{T}$ is
    \begin{align*}
        \tilde{\rho}(\tilde{T})=\{\tilde{S},\mid S\in\rho(T)\}=\{(S,(\mathcal{T}-c(S))\sqcap B_L)\mid S\in\rho(T)\}.
    \end{align*}
    We set the colourd tiling via
    \begin{align*}
        \tilde{\mathcal{T}}=\{\tilde{T}\mid T\in\mathcal{T}\}.
    \end{align*}
    It is easy to show that $\tilde{\rho}(\tilde{\mathcal{T}})=\tilde{\mathcal{T}}$.

    For a patch $\mathcal{P}$, we take an arbitrary $S\in\mathcal{P}$ and fix it. We set
    \begin{align*}
        \widetilde{\mathcal{B}}_{\mathcal{P}}
        =\{(P,\mathcal{S})\in\tilde{\mathcal{B}}\mid \mathcal{P}-c(S)\subset \mathcal{S}\}.
    \end{align*}
        We have a bijection
        \begin{align*}
            &x\in\{x\in\mathbb{R}^d\mid \mathcal{P}+x\subset\mathcal{T}\}\\
            &\mapsto ((P,(\mathcal{T}-x-c(S))\sqcap B_L), x+c(S))\in\{((P,\mathcal{S}),y)\in\widetilde{\mathcal{B}}_{\mathcal{P}}\times \mathbb{R}^d\mid (P,\mathcal{S})+y\in\tilde{\mathcal{T}}\}.
        \end{align*}
We conclude
\begin{align*}
    \card\{x\in A_n\mid \mathcal{P}+x\subset\mathcal{T}\}=\sum_{\tilde{P}\in\widetilde{\mathcal{B}}_{\mathcal{P}}}\card\{x\in A_n\mid \tilde{P}+x\in\tilde{\mathcal{T}}\}+o(\vol (A_n)).
\end{align*}
The repetitivity of $\mathcal{T}$ implies the primitivity of $\tilde{\rho}$, which implies the uniform convergence for the tile frequencies for $\tilde{\mathcal{T}}$ by
Theorem \ref{thm_uniform_tile_freq}.
The theorem follows.
\end{proof}

\textcolor{blue}{
It should be noted here that, as in the cases of usual substitution, when we try to construct geometric substitution from symbolic one, we have to find an appropriate expansion map $\phi$. 
For one-dimensional cases, as we see in Section \ref{Sec5}, from the information on how tiles protrude in the image of substitution and Lagarias-Wang restriction on the dominant eigenvalue of the substitution matrix, we indirectly 
obtain the expansion factor (the map $\phi$ for this case) in such examples,
see \cite{Lagarias-Wang:03}.}

\section{The expanding constant is an algebraic integer}\label{Sec3}

Let $\T$ be a tiling in $\R^d$ which is a fixed point for an overlapping substitution $\rho$ with an expansion constant $\beta$
, that is,
$\rho(\mathcal{T})=\mathcal{T}$.

Since we obtain a weak Delone set from overlapping substitution, 
by Lagarias-Wang criterion (see \cite{Lagarias-Wang:03}), 
we see that $\beta^d$ is equal to the Perron--Frobenius root of the substitution matrix of $\T$. We see this fact in Proposition \ref{prop_left_eigenvector} as well. 

As we described in the introduction, the entries of the substitution matrix are not necessarily integers. Therefore the
Perron--Frobenius eigenvalue may not be an algebraic integer.
In this section, we show that under a mild assumption of FLC and repetitivity, $\beta$ is an
algebraic integer.

In general, a tiling $\mathcal{S}$ has finite local complexity (FLC, in short), if for any given positive $R$, there are only finitely many patches $P$, up to translation, which appear in a ball $B(x,R)$ of radius $R$.
A tiling $\mathcal{S}$ is repetitive if for any patch $P$ of $\mathcal{S}$, a positive $R$ exists so that any ball $B(x,R)$ contains a translate of $P$.

\begin{theorem}
If $\T$ satisfies FLC and is repetitive, the expansion constant $\beta$ must be an algebraic integer.
\end{theorem}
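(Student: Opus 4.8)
The plan is to exploit the standard rigidity of FLC repetitive tilings: the set of return vectors (control points) is a finitely generated subgroup of $\R^d$, and the expansion $\phi$ must preserve it, hence $\phi$ is represented by an integer matrix in a suitable basis, which forces $\beta^d=\det\phi$ (and in fact the eigenvalues of $\phi$ themselves) to be algebraic integers. The subtlety here, compared to the classical self-affine tiling case, is that $\rho$ produces \emph{overlapping} patches in the intermediate steps, so one must be slightly careful about what the ``control points'' are and why the substitution still maps the lattice of translations into itself. But since $\T$ is an honest tiling (a patch with no illegal overlaps, covering $\R^d$) and is a genuine fixed point $\rho(\T)=\T$, the overlaps only occur in the bookkeeping of $\tilde\rho$, not in $\T$ itself, so the classical machinery applies essentially verbatim.

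\medskip
\noindent\textbf{Step 1: Control points and the module of translations.} Choose one control point $c(T)\in\R^d$ for each proto-tile $T_i$ (say, in a $\phi$-equivariant way: pick $c(T_i)$ so that $\phi(c(T_i))$ is a control point of one of the tiles appearing in $\rho(T_i)$; such a choice exists after passing to a power of $\rho$, using primitivity, exactly as in the self-affine case). Let $\Xi$ be the set of all difference vectors $c(S)-c(S')$ where $S,S'$ range over tiles of $\T$. By FLC, $\Xi$ is a \emph{Delone} set and, more importantly, the group $\langle\Xi\rangle$ it generates is a finitely generated subgroup of $\R^d$, hence a free abelian group of some rank $m\le \ldots$ wait, of finite rank. (Finite generation: FLC implies finitely many tile-types up to translation and finitely many two-tile patches up to translation, so finitely many ``nearest-neighbour'' difference vectors generate everything by a connectedness/path argument through the tiling.)

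\medskip
\noindent\textbf{Step 2: $\phi$ stabilizes $\langle\Xi\rangle$.} Because $\rho(\T)=\T$ and the control points were chosen $\phi$-equivariantly, for any two tiles $S,S'\in\T$ the vector $\phi(c(S)-c(S'))$ is again a difference of control points of tiles in $\rho(\T)=\T$, hence lies in $\langle\Xi\rangle$. Thus $\phi(\langle\Xi\rangle)\subset\langle\Xi\rangle$. Writing $\phi$ in a $\Z$-basis of $\langle\Xi\rangle$ gives an integer matrix $N$; since $\langle\Xi\rangle$ spans $\R^d$ (it is Delone, so relatively dense, so its $\R$-span is all of $\R^d$ — here repetitivity is used to guarantee the span is full-dimensional, or one argues directly from the tiling being a covering), the characteristic polynomial of $\phi$ divides that of $N$, so every eigenvalue of $\phi$ is an algebraic integer. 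In particular $\beta$, which is one of them in modulus (and by Proposition~\ref{prop_left_eigenvector}, $\beta^d=\det\phi=\det N\in\Z$), is an algebraic integer.

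\medskip
\noindent\textbf{Main obstacle.} The one place that genuinely needs care — and where the ``overlapping'' nature of $\rho$ could bite — is Step 1's claim that one can choose control points \emph{$\phi$-equivariantly for a single power $\rho^p$}, uniformly over proto-tiles, together with finite generation of $\langle\Xi\rangle$. In the classical setting this is the content of the ``unique composition / recognizability'' type argument, but here I would instead avoid recognizability entirely: pick the control points arbitrarily once and for all, and observe that for \emph{each} proto-tile $T_i$ the finite patch $\rho(T_i)$ determines a finite set of vectors $\phi(c(T_i)) - c(S)$ for $S\in\rho(T_i)$; since $\T$ is covered by translated copies of the $\rho(T_i)$ (as $\rho(\T)=\T$), the relation $\phi(c(S)-c(S'))\in\langle\Xi\rangle+(\text{these finitely many correction vectors})$ holds, and enlarging $\langle\Xi\rangle$ by those finitely many extra vectors (still finitely generated, still full rank, still $\phi$-stable after this enlargement) closes the argument. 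So the real work is checking that this enlarged module is still finitely generated and $\phi$-invariant, which is a finite-data bookkeeping verification rather than anything deep.
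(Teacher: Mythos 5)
Your overall strategy --- FLC plus repetitivity give a finitely generated $\mathbb{Z}$-module of translation vectors, the fixed-point property $\rho(\mathcal{T})=\mathcal{T}$ makes this module invariant under the expansion, and an integer matrix then certifies algebraic integrality --- is exactly the paper's. But the variant you actually commit to in your ``Main obstacle'' paragraph has a genuine gap. With arbitrary (non-equivariant) control points you set $L=\langle\Xi\rangle$ and enlarge it by the finitely many correction vectors $e_i=\phi(c(T_i))-c(U_i)$, $U_i\in\rho(T_i)$, claiming that $L'=L+\sum_i\mathbb{Z}e_i$ is $\phi$-invariant after a ``finite-data bookkeeping verification''. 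What the bookkeeping actually gives is only $\phi(L)\subset L'$; invariance of $L'$ also requires $\phi(e_i)\in L'$, and nothing forces this: $\phi(e_i)=\phi^2(c(T_i))-\phi(c(U_i))$ reintroduces $\phi$-images of control points, i.e.\ new correction terms. Iterating, the module you really need is $L+\sum_{n\geq 0}\phi^n(E)$ with $E=\sum_i\mathbb{Z}e_i$, and for a generic non-equivariant choice of control points (already in dimension one, shift each $c(T_i)$ by unrelated irrational offsets) there is no reason for this ascending chain to stabilize, hence no reason for finite generation. So this route, as stated, does not close.

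The repair is either your first option --- equivariant control points, which needs no recognizability at all, only the standard tile-map construction after passing to a power of $\rho$ (harmless: $\rho^p(\mathcal{T})=\mathcal{T}$, and if $\lambda^p$ is an algebraic integer so is $\lambda$) --- or, more simply, the paper's device: drop control points altogether and use same-type return vectors $\Lambda_i=\{x-y \mid x+T_i\in\mathcal{T},\ y+T_i\in\mathcal{T}\}$. If $x+T_i,y+T_i\in\mathcal{T}$, then for any tile $S\in\rho(T_i)$ both $S+\phi(x)$ and $S+\phi(y)$ lie in $\rho(\mathcal{T})=\mathcal{T}$ and are translates of a single prototile, so $\phi(x-y)$ is again a same-type return vector; no equivariance and no correction vectors are needed, and finite generation of $\langle\bigcup_i\Lambda_i\rangle$ follows from FLC and repetitivity as you argue. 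Two smaller inaccuracies in your Step 2: $\det\phi\neq\det N$ in general when the module has rank $k>d$ ($\det\phi$ is only a product of a subset of the eigenvalues of $N$, hence an algebraic integer, not necessarily a rational integer), and ``$\beta$ is one of them in modulus'' does not by itself yield integrality --- either take the expansion to act as multiplication by $\beta$ on return vectors, as the paper implicitly does when writing $\beta v_i=\sum_j a_{ij}v_j$, or conclude via the fact that $\beta^d=|\det\phi|$ is an algebraic integer together with integral closedness of the algebraic integers.
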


\begin{proof}
Let $T_i\ (i=1,\dots,\kappa)$ be the proto-tiles of $\T$. We define $\Lambda_i=\{ x-y\in \R^d\ |\ x+T_i \in \T \text{ and } y+T_i \in \T \}$
and $\langle\Lambda_i\rangle$ is the $\Z$-module generated by $\Lambda_i$. 
By FLC and repetitivity, we claim that 
$\langle\Lambda_i\rangle$ is finitely generated by the 
stepping-stone discussion in \cite{Lagarias:99}. 
Take $P=T_i$ as a patch. By FLC, 
$$
W=\{ x-y\in \R^d \ |\ x+T_i \in \T, y+T_i\in \T, \vert x-y \vert \le 3R \}
$$
is a finite set. Take
an element  $x-y\in \langle \Lambda_i \rangle$ such that $x+T_i \in \T, y+T_i \in \T$.
Choose an integer $K>0$ so that
$$
x_j=(1-j/K)x + j K y
$$
for $j=0,1,\dots,K$ and $\vert x_{j+1}-x_j \vert \le R$. Then every ball
$B(x_j,R)$ contains some $u_j+T_i\in \T$. Since $u_{j+1}-u_j\in W$, we see that
$\langle \Lambda_i \rangle$ is generated by $W$. The claim is proved. 

Therefore we have
$$
\left\langle\bigcup_{i=1}^{\kappa}\Lambda_i\right\rangle = v_1\Z +\dots + v_k \Z
$$
where $k\ge d$. Applying the overlap substitution, we have
$$
\beta v_j = \sum_{i=1}^k a_{ij} v_i
$$
with $a_{ij}\in \Z$. Consider a matrix $V=[v_1,v_2,\ldots, v_k]$ and
$M=(a_{ij})$.
Then $\beta V=VM$ and so
$\beta V^{T}x=M^{T}V^{T}x$, where
${}^{T}$ denotes the transpose
and $x\in\mathbb{R}^d$.
Thus $\beta$ is an eigenvalue of the $k\times k$ matrix $M$ with integer entries.
\end{proof}

\begin{figure}[h]
    \centering
\includegraphics[width=0.6\linewidth]{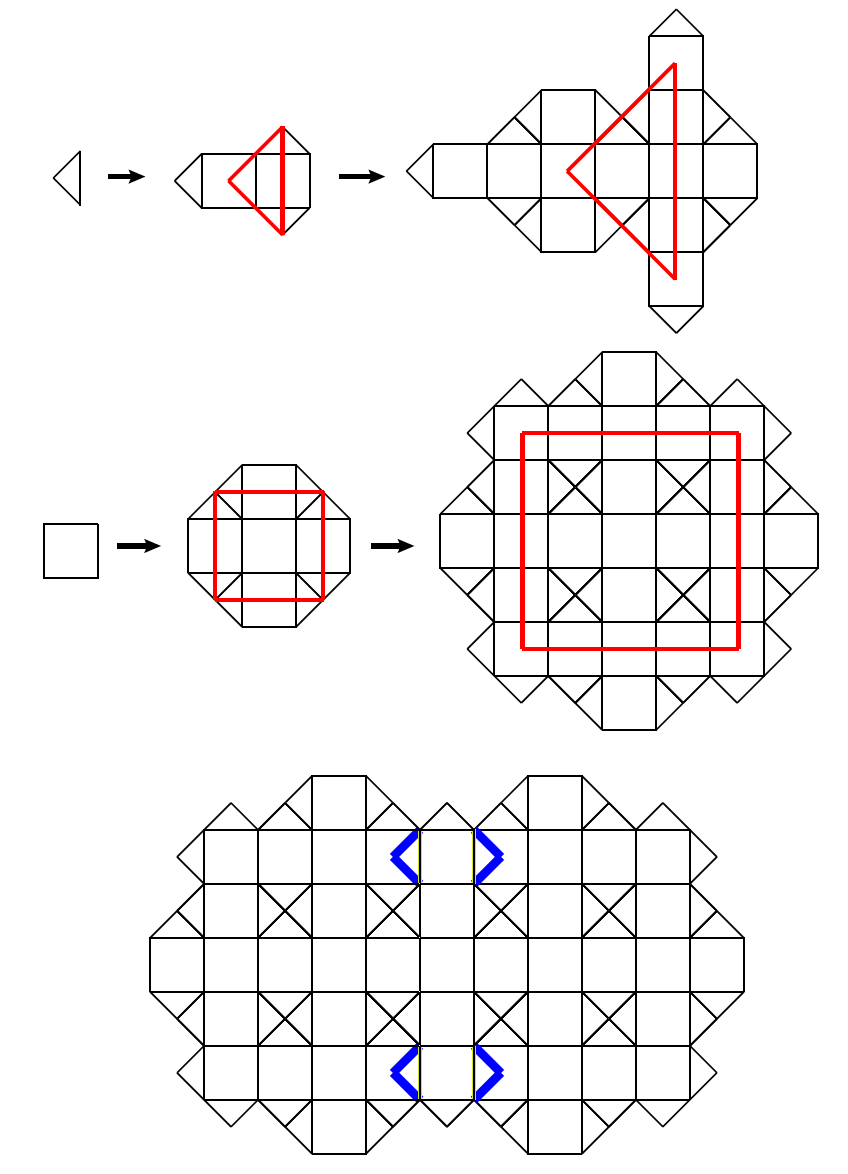}
    \caption{This is a substitution produced by triangles and squares. We see that the first and second iterations are well-defined but for the third time, illegal overlaps appear when we try to fit adjacent squares: blue segments in the last picture.}
    \label{Non-consis}
\end{figure}

\section{Consistency of pre-overlapping substitution: open set condition}\label{Sec4}


The example of Dotera-Bekku-Ziher \cite{DBZ} presents another problem. Can we iterate this substitution rule infinitely many times, without yielding illegal (partial) overlaps-two different tiles that overlap in the interiors?

\textcolor{blue}{
For example, see Figure \ref{Non-consis} for an example where the substitution produces illegal overlap only after several steps of iterations.
Even if every inflated edge is shared by several tiles exactly in the same way whenever we iterate,
as in Figure \ref{BronzeSubst},
and the illegal overlaps do not occur locally, there might be a global overlap: the boundary of $n$-th iterate might intersect with itself and may produce an illegal overlap. For example, in the iterations of substitution in Figure \ref{fig:inflation_complicated-boundary}, the boundary evolves in a complicated way.  How can one prove that the substitution is consistent? This is the problem we address in this section.}

If the boundary never meets itself, then the above inconsistency does not happen for good and we gain patches whose supports contain balls of arbitrary radius. Consequently, there exists a tiling by a usual argument
(c.f. \cite[Theorem 3.8.1]{Gruenbaum-Shephard:87}). 
We show that this safe 
situation is achieved by the technique in fractal geometry. 

A map $f:\R^2\to \R^2$ is {\it contractive} 
if there exists $0<r<1$ that
$\Vert f(x)-f(y) \Vert \le r \Vert x-y\Vert$. 
Given a strongly connected graph $(V,E)$ with vertex set $V=\{1,\dots,\kappa\}$ and
edge set $E\subset V\times V$. 
For each edge $e\in E$, 
we associate a contractive map $f_e$ whose contraction ratio may depend 
on $e$. 
Let us denote by $E_{i,j}$ the set of edges from vertex $i$ to vertex $j$ in $(V, E)$. 
Then there exist unique non-empty compact sets $K_i$'s which satisfy
\begin{equation}
\label{GIFS}
\bigcup_{j=1}^{\kappa} \bigcup_{e\in E_{i,j}} f_e(K_j)=K_i
\end{equation}
which are called the attractors of the graph-directed
iterated function system $\{f_e\ |\ e\in E\}$ (GIFS for short).  
We write $i(e)$ (resp. $t(e)$) the initial vertex (resp. terminal vertex).
The GIFS is satisfied the {\it open set condition} (OSC for short) if there exist bounded non-empty open sets $U_j\ (j\in V)$ such that
$$
\bigcup_{j=1}^{\kappa} \bigcup_{e\in E_{i,j}} f_e(U_j)\subset U_i
$$
and the left union is disjoint.
See \cite{Falconer:97}.
For a walk $\i=(e_1,\dots, e_n)$ in $(V,E)$, 
we write 
$K_{\i}=f_{e_1}f_{e_2}\dots f_{e_n}(K_{t(e_n)})$. The open set condition
guarantees that $K_{\i}$ and $K_{\j}$ 
do not share an inner point if and only if $\i$ and 
$\j$ are not comparable, i.e., $\i$ is not a prefix of $\j$, and $\j$ is
not a prefix of $\i$.
Given a GIFS, the open sets $U_j$ for OSC are called {\it feasible open sets}. Note that feasible open sets are 
not unique for a given GIFS. 


For a GIFS, for each $i\in V$, 
we shall define a graph with the vertex set 
$G_i=\bigcup_{j=1}^\kappa E_{i,j}$.
There exists an edge 
from vertex $e\in G_{i}$ to $e'\in G_{i}$ if and only if
$f_e(K_{t(e)})\cap f_{e'}(K_{t(e')})\neq \emptyset$ in (\ref{GIFS}).
All $K_{i}$ are connected if $G_i$ are connected graphs (c.f. 
\cite{Hata:85,Luo-Akiyama-Thuswaldner:04}).
If every graph $G_i$ is a line graph and if
for each $(e,e')\in G_i$
$\overline{f_e(U_{t(e)})}\cap \overline{f_{e'}(U_{t(e')})}=\{a_{e,e'}\}$ 
with a single point $a_{e,e'}\in \R^2$, 
we say the feasible open sets satisfy {\it linear GIFS condition}.
Linear GIFS condition implies that $K_i$ is homeomorphic to a proper segment, and the homeomorphism between $[0,1]$ and $K_i$ is naturally constructed from this graph.
Similar construction appeared in many articles, e.g. 
\cite{Bandt-Mekhontsev:18,
Akiyama-Loridant:11}.

To apply this, we consider the limit tile
$$
\mathcal{T}_i=\lim_{n\to \infty} \beta^{-n} \rho^n(T_i)
$$
here the limitation is given by the Hausdorff metric, where $\rho$ is an expanding overlapping substitution with a primitive substitution matrix,$\beta$ is the left eigenvector for the Perron--Frobenius eigenvalue and $\{T_1,T_2\dots,T_{\kappa}\}$ are proto-tiles. 
The non-overlapping situation is guaranteed when 
the boundary $\partial(\mathcal{T}_i)$ of $\mathcal{T}_i$
satisfies a feasible open set with linear GIFS condition. 
We shall find a GIFS with attractors $J_j\ (j=1,\dots,\ell)$ 
for the boundaries of the limit tiles $\mathcal{T}_i$ such that $\partial(\mathcal{T}_i)$ is the union of some $J_i$; moreover, there exist feasible open sets having a linear GIFS condition. 

Let $\sigma:\{1,\dots,\ell\}\to\{1,\dots,\ell\}$ be the substitution rule given by the GIFS of the boundary then we have $J_1,\dots,J_{\ell}$ satisfying
\begin{align}\label{eq:boundary}
J_i=\bigcup_{e\in \sigma(i)} g_e(J_e).
\end{align}
For each $J_i$, we start with a broken line connecting the successive junction points $\{a_{e,e'}\}$. By iteration of the broken lines with the boundary substitution, it converges to
these pieces $J_1,\dots J_{\ell}$ by the Hausdorff metric. Since $\partial(\mathcal{T}_i)$ is a union of $J_i$, we could regard that all $\partial(\mathcal{T}_i)$ are the limit of sequences of broken lines. Clearly, by this construction, the self-intersection of the boundary pieces can not occur at any level for any pieces. We can also check from the feasible open sets that appeared in the boundary that the boundary
pieces $J_i\ (i=1,\dots,\ell)$ 
form closed curves by showing
\begin{itemize}
    \item two adjacent pieces of right side of \eqref{eq:boundary} is a singleton;
    \item $J_{\omega_1}\cap J_{\omega_2}=\emptyset$ if $\omega_1,\omega_2\in \{1,2,\dots,\ell\}^n$ are not adjacent. 
Here the adjacent means order given by lexicographical order.
\end{itemize}
After this confirmation, we see that no self-intersection occurs even if we iterate infinitely many times the substitution. 
Consequently $\partial(\mathcal{T}_i)$ is a Jordan closed curve and we see
that $\mathcal{T}_i$ is a topological closed disk in view of Jordan's curve theorem.

Once we finish this task, the consistency will hold for all levels of
the patch and thus the tiling itself. 

Linear GIFS condition guarantees that boundary does not cause self-intersection and the consistency can be checked locally. This restriction is of course not necessary in general. There may be an overlapping substitution that the boundary
intersects but does not cause contradiction. (See Figure \ref{BronzeSupple})

\begin{ex}[Feasible open sets for boundaries Bronze-mean 'tiles']
\label{Dotera}
\begin{figure}[h]
    \centering
\includegraphics[width=8.5 cm]{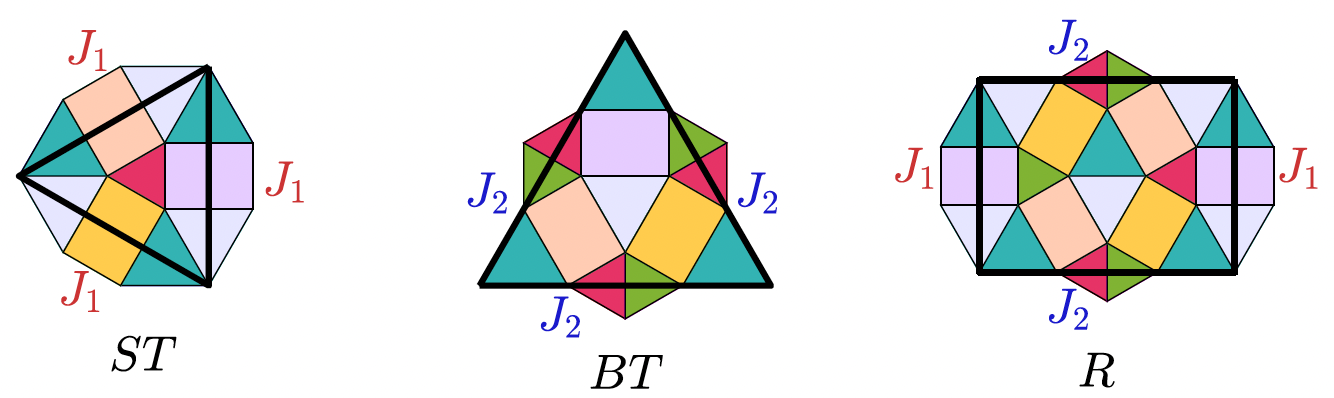}
    \caption{Two different types $J_1, J_2$ of the boundaries of Bronze-means tiles}
    \label{BronzeJ1J2}
\end{figure}

\begin{figure}[h]
\centering
\includegraphics[width=3.5 cm]{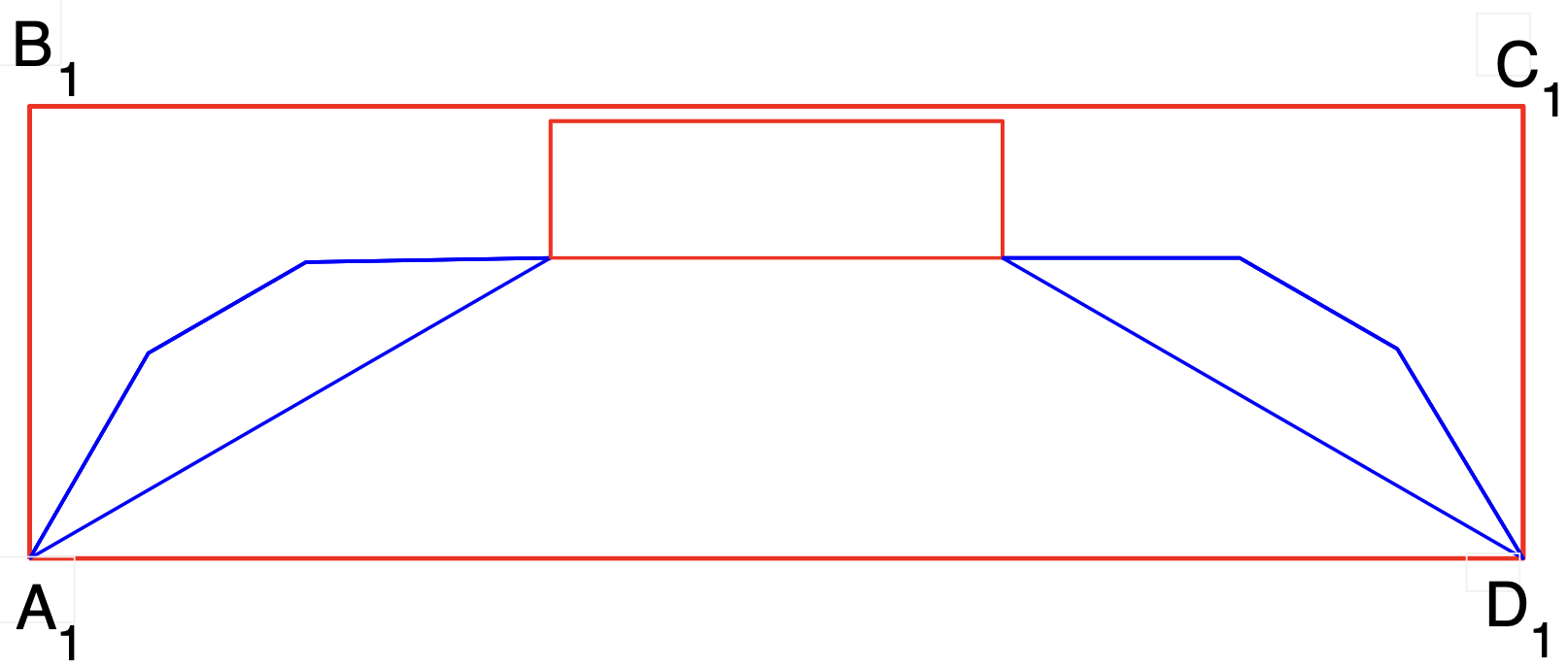}~\includegraphics[width= 4 cm]{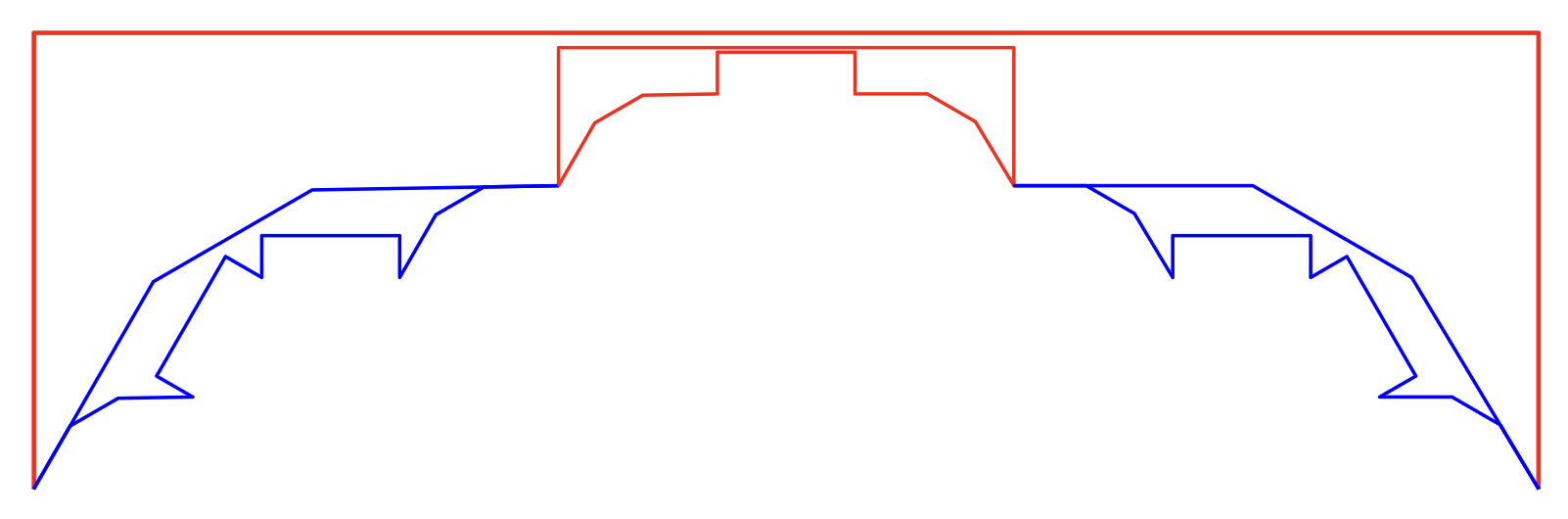}\quad\includegraphics[width= 4 cm]{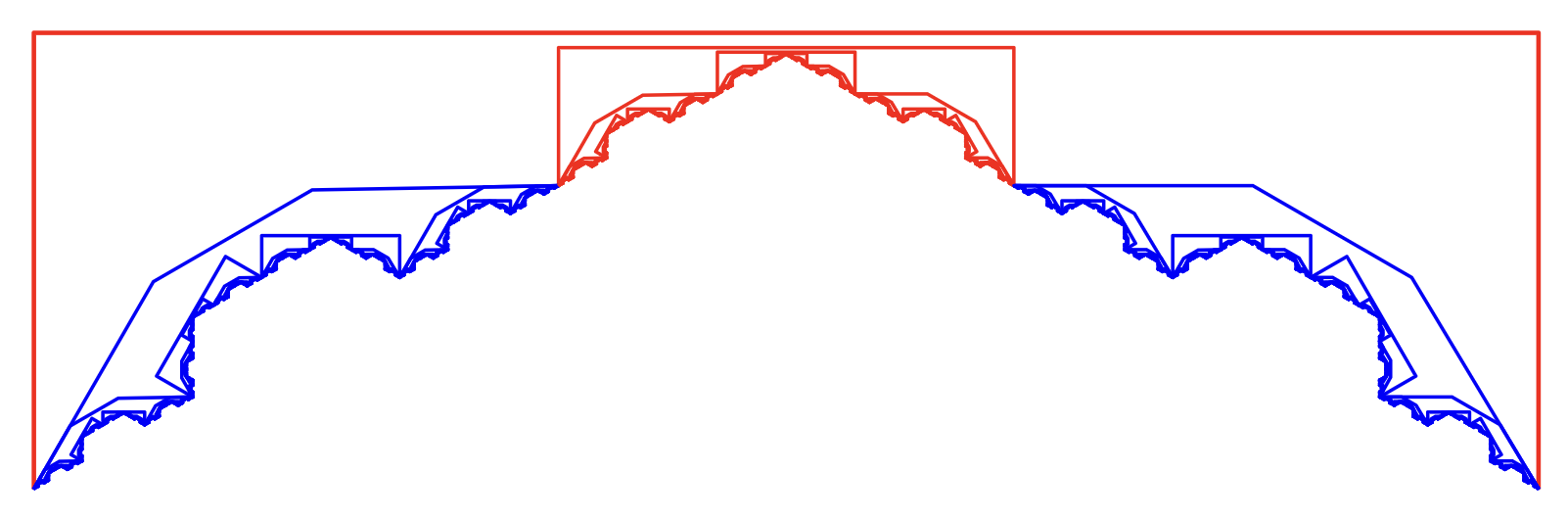}
\caption{Feasible open sets for $J_1$: the interior of rectangle $A_1B_1C_1D_1$. We show the coordinates which are $A_1=(0,0),B_1=(0,r),C_1=(1,r),D_1=(1,0)$, where $r=\frac{\sqrt{13}-3}{2}.$}\label{Bronze_tpyeA}
\end{figure}
\begin{figure}[h]
\includegraphics[width=3.5 cm]{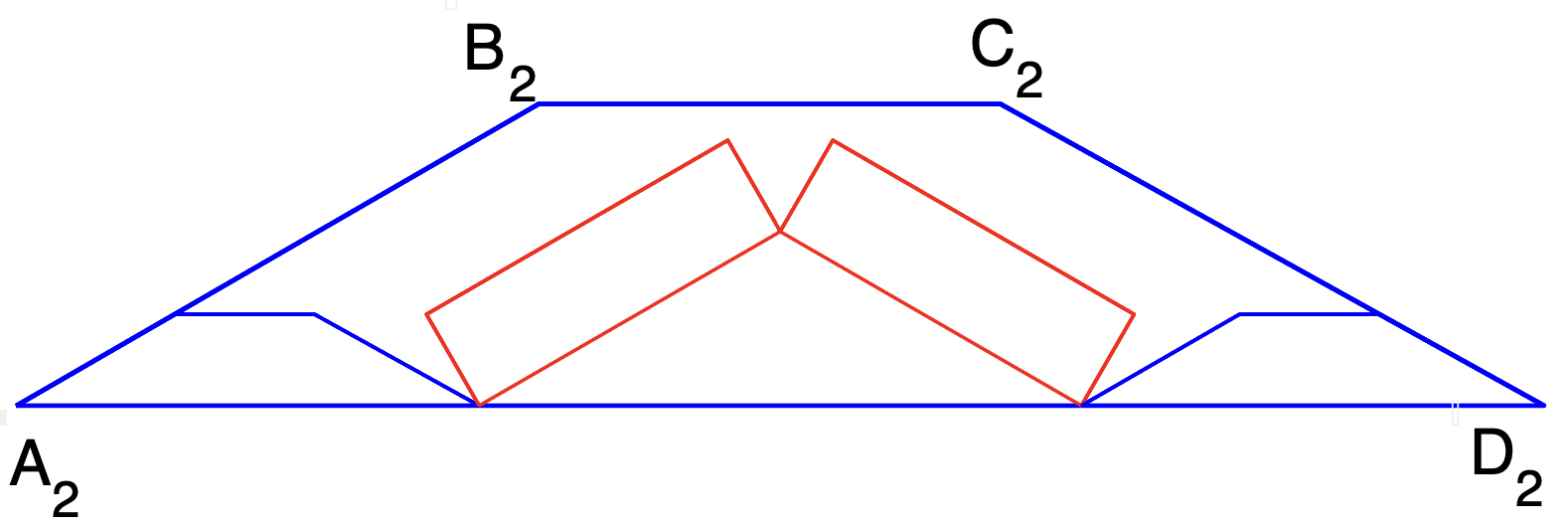}~\includegraphics[width= 4 cm]{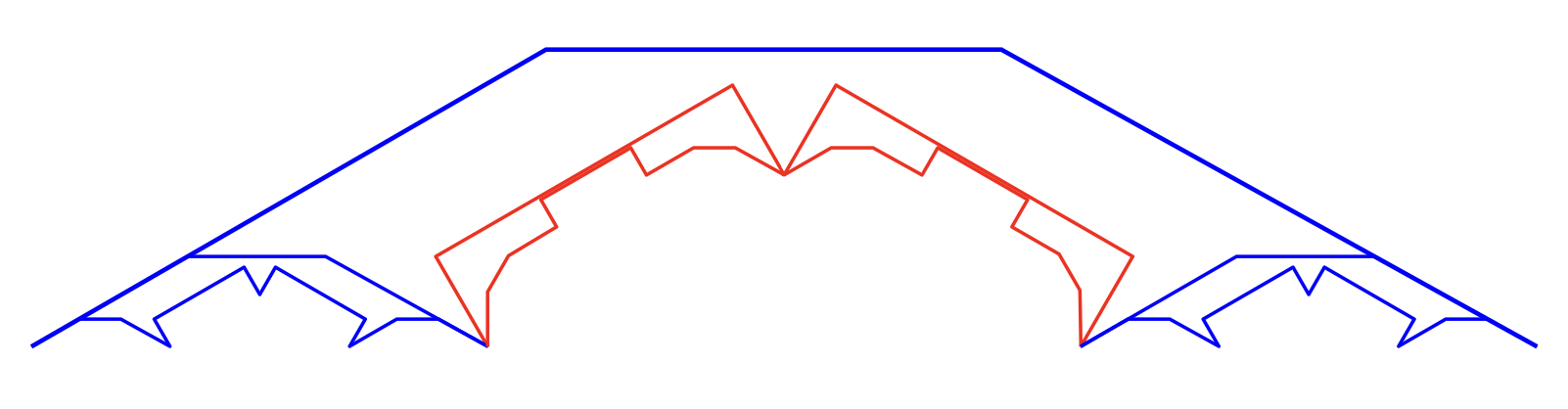}\quad\includegraphics[width= 4 cm]{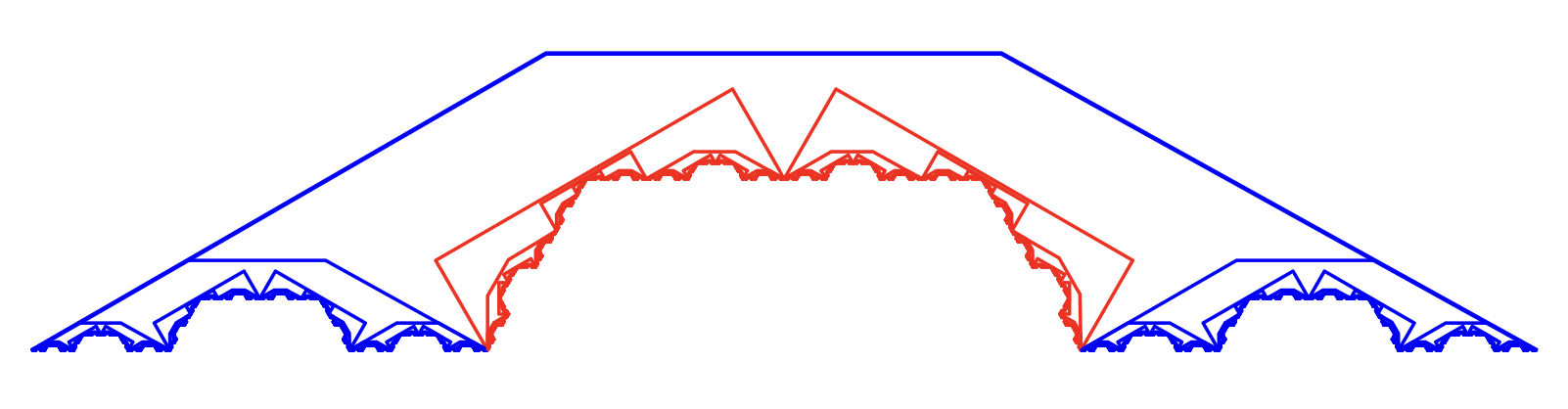}
\caption{Feasible open sets for $J_2$: the interior of the isosceles 
 trapezoid $A_2B_2C_2D_2$ with the coordinates $A_2=(0,0),B_2=(\frac{3}{2},\frac{\sqrt{3}}{2})r,C_2=(\frac{3}{2}+s,\frac{\sqrt{3}}{2})r,D_2=(s,0)$, here $s=\frac{r+2}{\sqrt{3}}$.}\label{Bronze_typeB}
\end{figure}

For the Bronze-mean tiles we introduced at the beginning (See Figure \ref{BronzeSubst}), we will show here that the boundaries of tiles satisfy the open set condition and the feasible sets satisfy linear IFS condition.

Denote the Bronze-means tiles by $T_1$ (produced by $ST$), $T_2$ (produced by $BT$) and $T_3$ (produced by $R$).
By the construction of $T_1,T_2$ and $T_3$ , the boundaries of them are distinguished by two different types, say $J_1$ and $J_2$ (see Figure \ref{BronzeJ1J2}). And $J_1$, $J_2$ satisfy the following set equations
\begin{align*}
J_1&=f_{12}(J_2)\cup f_{11}(J_1)\cup f_{12}'(J_2)\\
J_2&=f_{22}(J_2)\cup f_{21}(J_1)\cup f_{21}'(J_1)\cup f_{22}'(J_2)
\end{align*}
(Here $f_{ij}$ and $f_{ij}'$ could be written through Figure \ref{BronzeJ1J2}.) Let $U_1$ be the rectangle $A_1B_1C_1D_1$ without the boundary in Figure \ref{Bronze_tpyeA} and $U_2$ be the trapezoid $A_2B_2C_2D_2$  without the boundary in Figure \ref{Bronze_typeB}. Then we know $U_1$ and $U_2$ satisfy
\begin{align*}
f_{12}(U_2)\cup f_{11}(U_1)\cup f_{12}'(U_2)&\subset U_1\\
f_{22}(U_2)\cup f_{21}(U_1)\cup f_{21}'(U_1)\cup f_{22}'(U_2)&\subset U_2
\end{align*}
To check the feasible open sets satisfying the linear GIFS condition, we take $U_1$ as an example, it is easy to see from Figure \ref{Bronze_tpyeA} and \ref{Bronze_typeB} that 
$\overline{f_{12}(U_2)}\cap \overline{f_{11}(U_1)}$ and $\overline{f_{11}(U_1)}\cap \overline{f_{12}'(U_2)}$ are both singletons.

\end{ex}

\begin{ex}[Feasible open sets for boundaries square triangle 'tiles']
 The square triangle  is showing in Figure \ref{SquareTriangleSubst}  and  it is easy to know that they have fractal boundaries. We will present that the boundaries satisfy the open set condition and the feasible sets satisfy linear IFS condition. 
 
\begin{figure}[h]
    \centering
\includegraphics[width=9.5 cm]{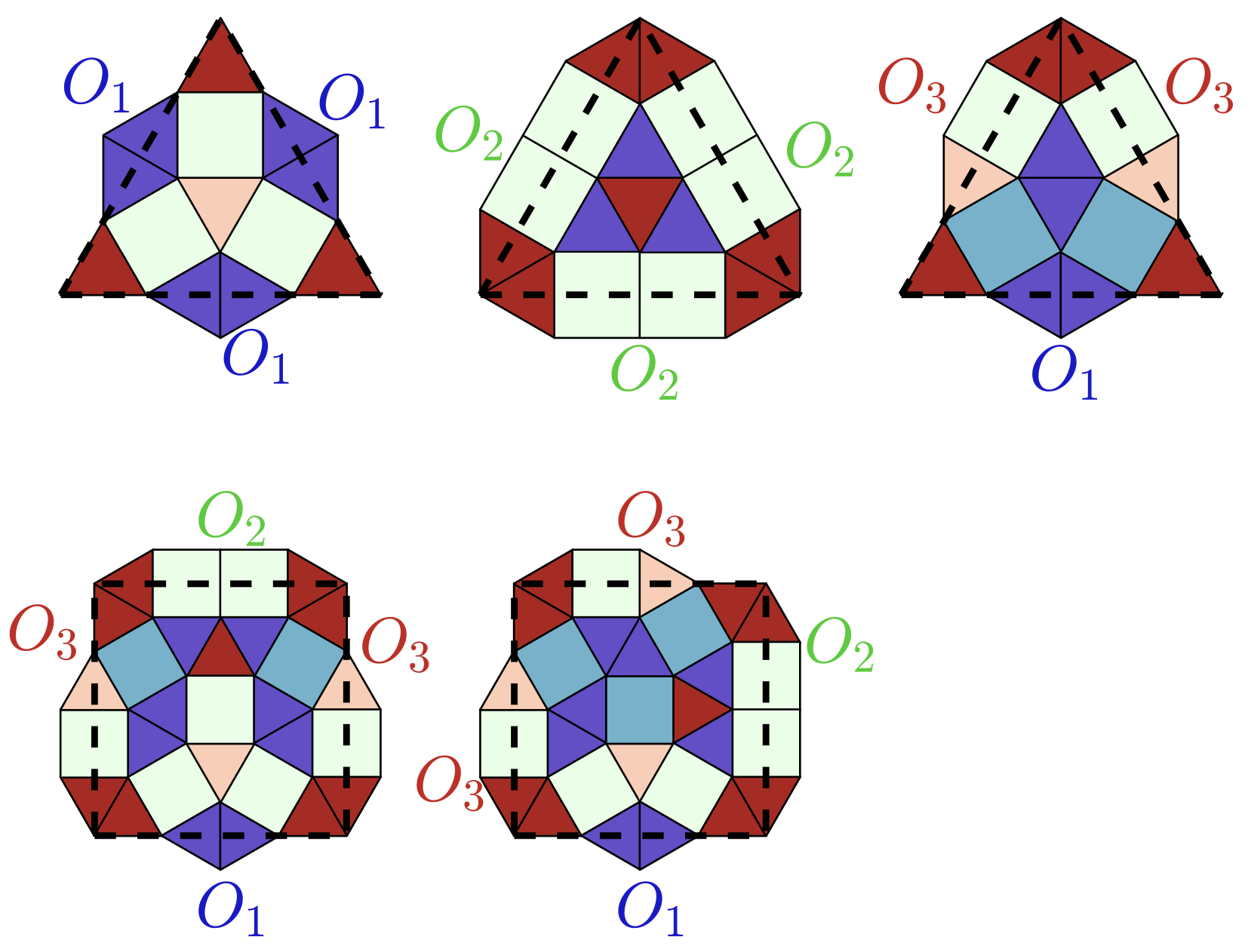}
    \caption{Three different types $O_1, O_2$ and $O_3$ of the boundaries of square triangle tiles}
\end{figure}

\begin{figure}[h]
\centering
\includegraphics[width=4 cm]{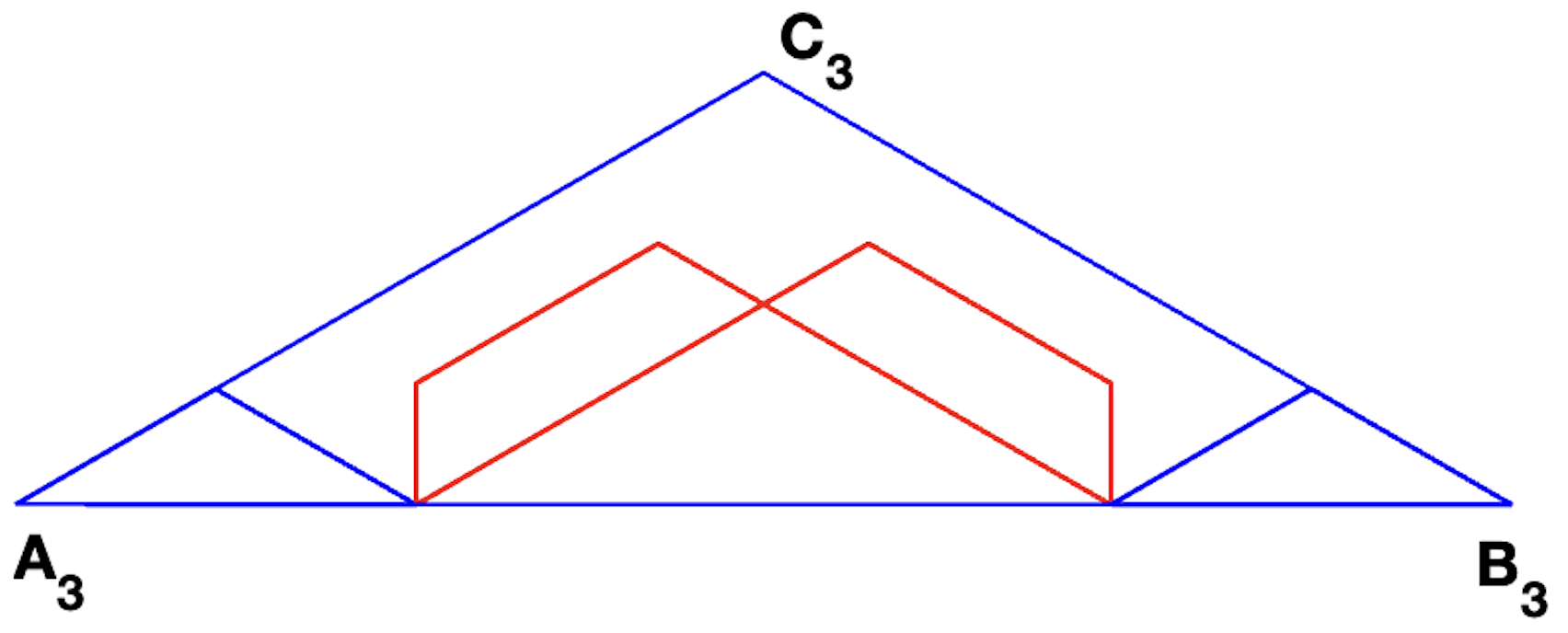}~\includegraphics[width= 4 cm]{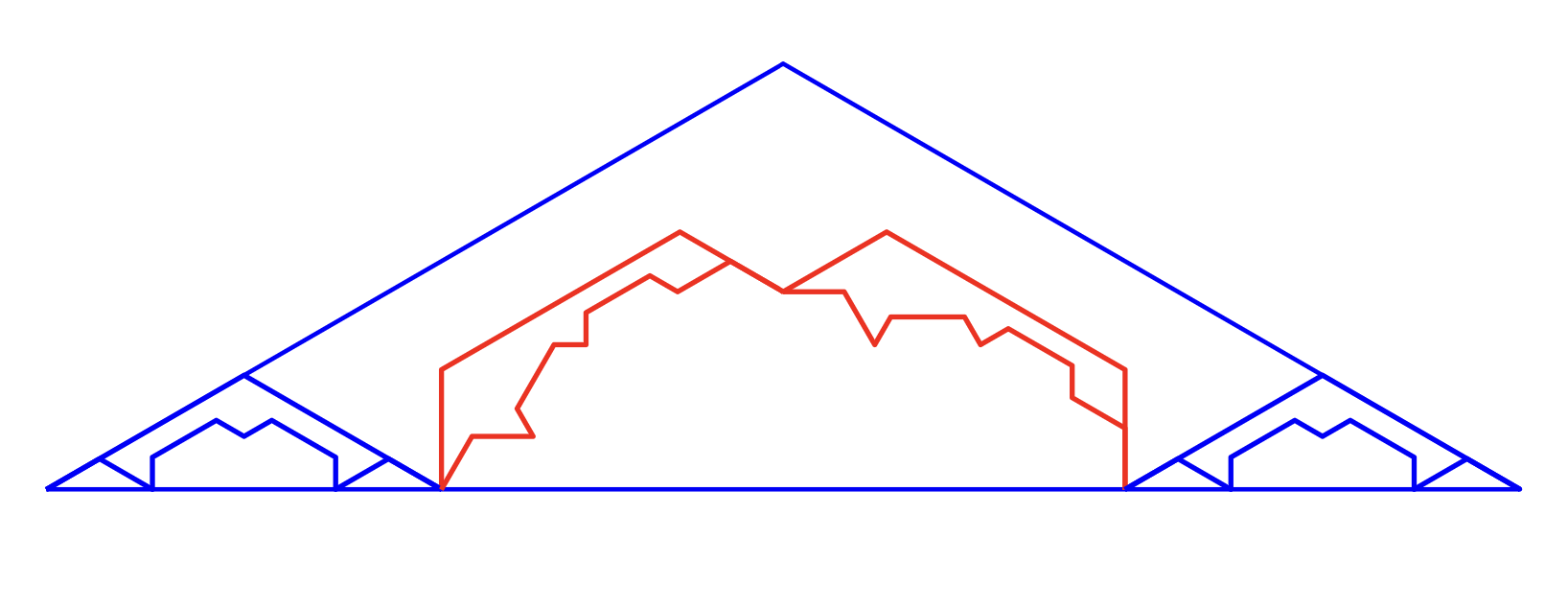}\quad\includegraphics[width= 4 cm]{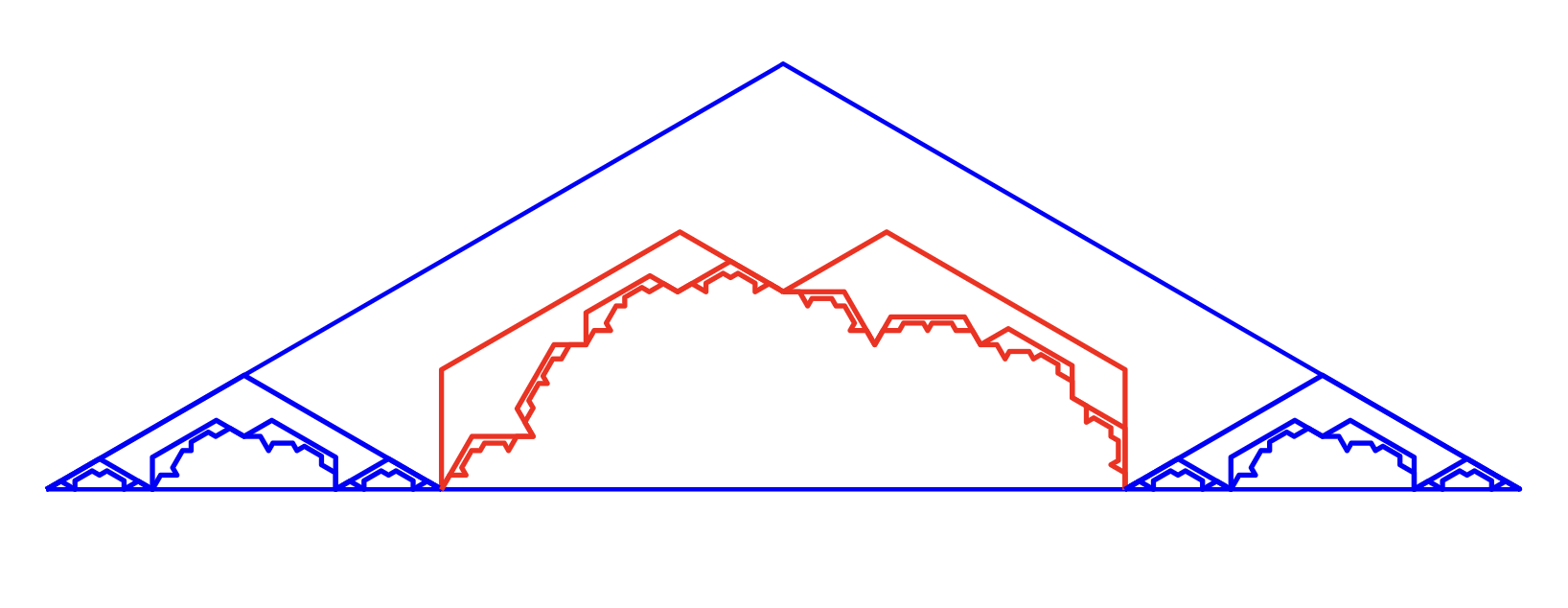}
\caption{Feasible open set for $K_1$: the interior of the triangle which is an isosceles triangle with base angle is $\frac{\pi}{6}$ and length of base edge $1$.}\label{SquareTriangle_tpyeA}
\end{figure}
\begin{figure}[h]
\includegraphics[width=4 cm]{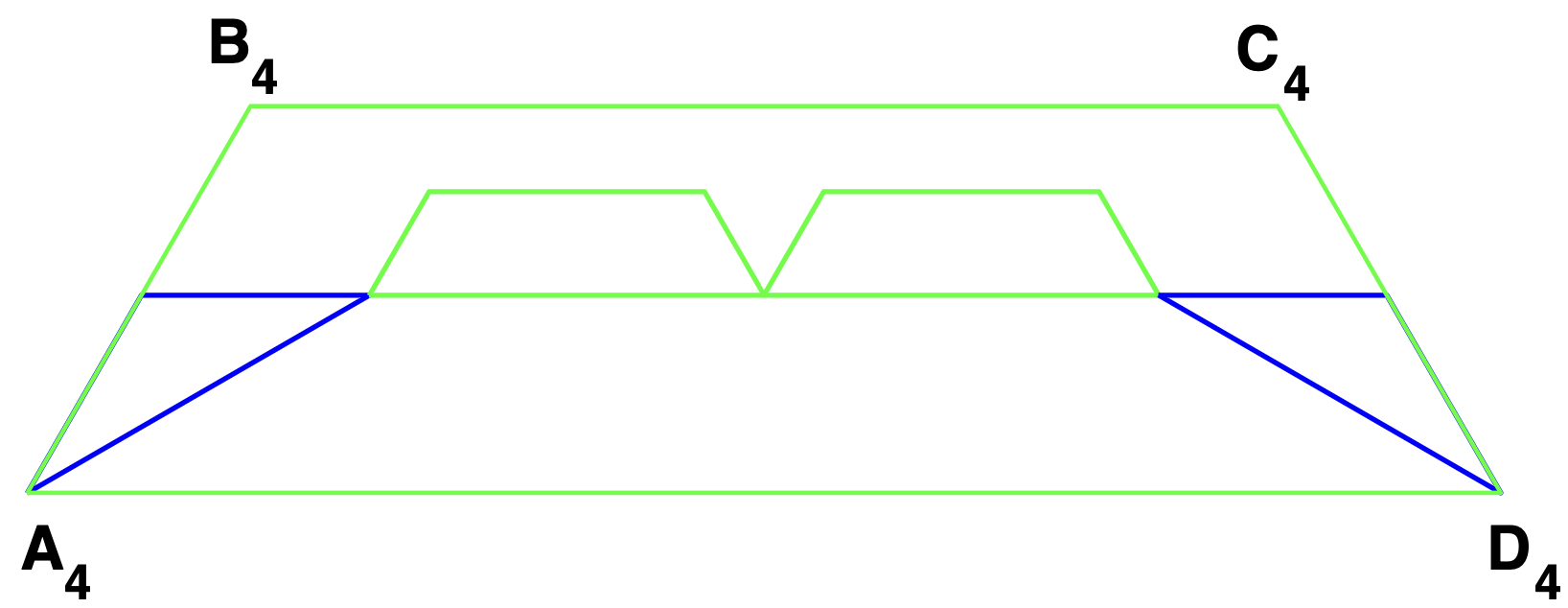}~\includegraphics[width= 4 cm]{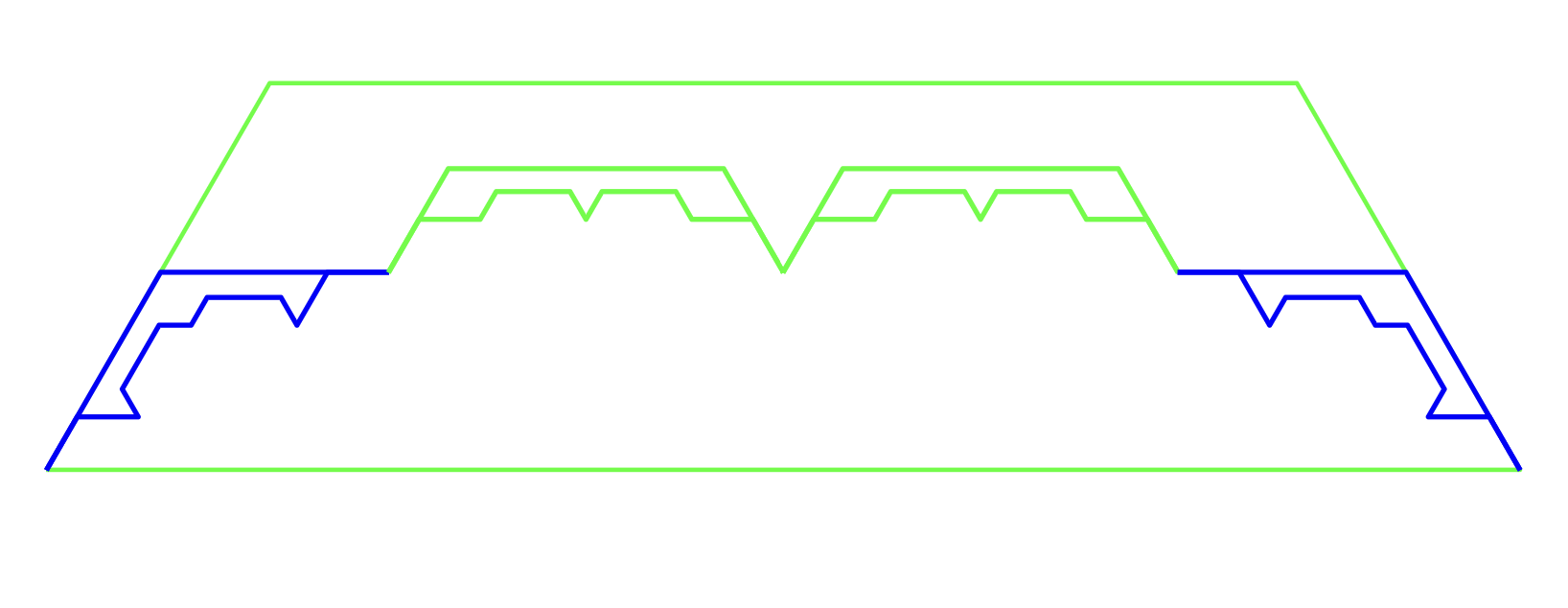}\quad\includegraphics[width= 4 cm]{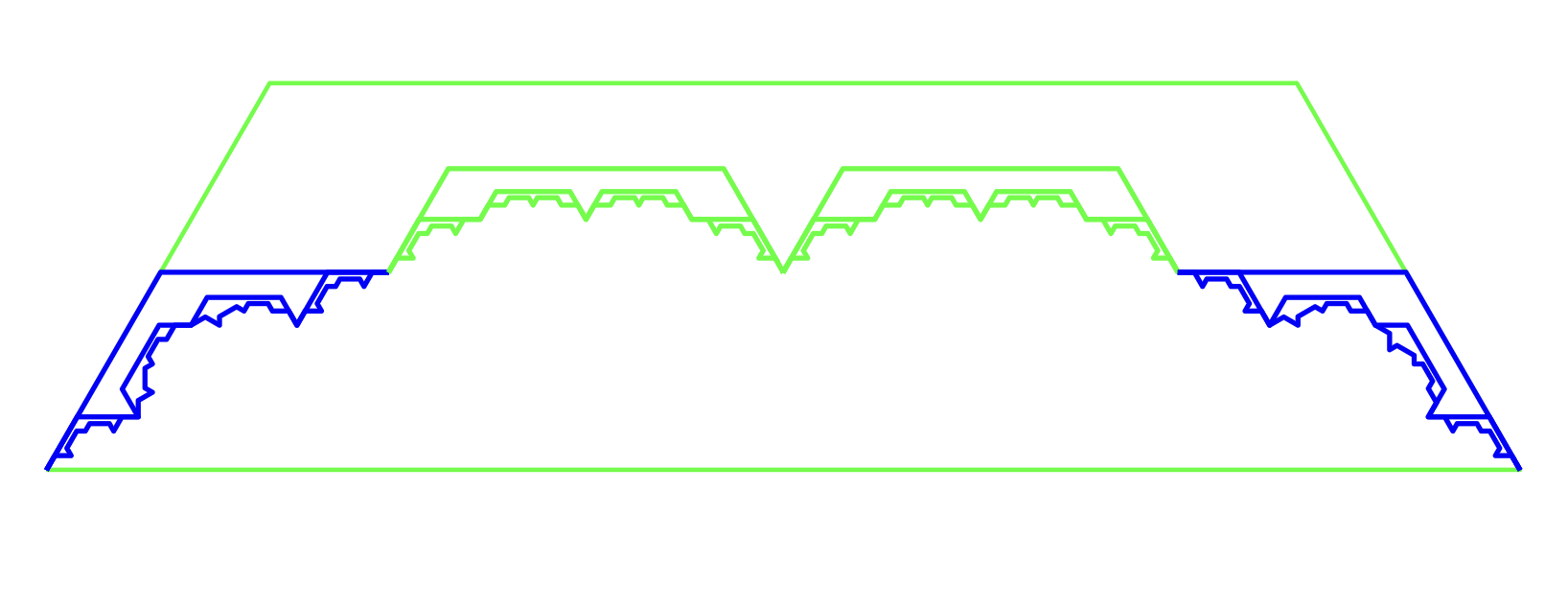}
\caption{Feasible open set for $K_2$: the interior of an isosceles trapezoid with base angle $\frac{\pi}{3}$, base edge with length $1$ and equal edges with length $r=\frac{\sqrt{13}-3}{2}$.}\label{SquareTriangle_tpyeB}
\end{figure}
\begin{figure}[h]
\includegraphics[width=4 cm]{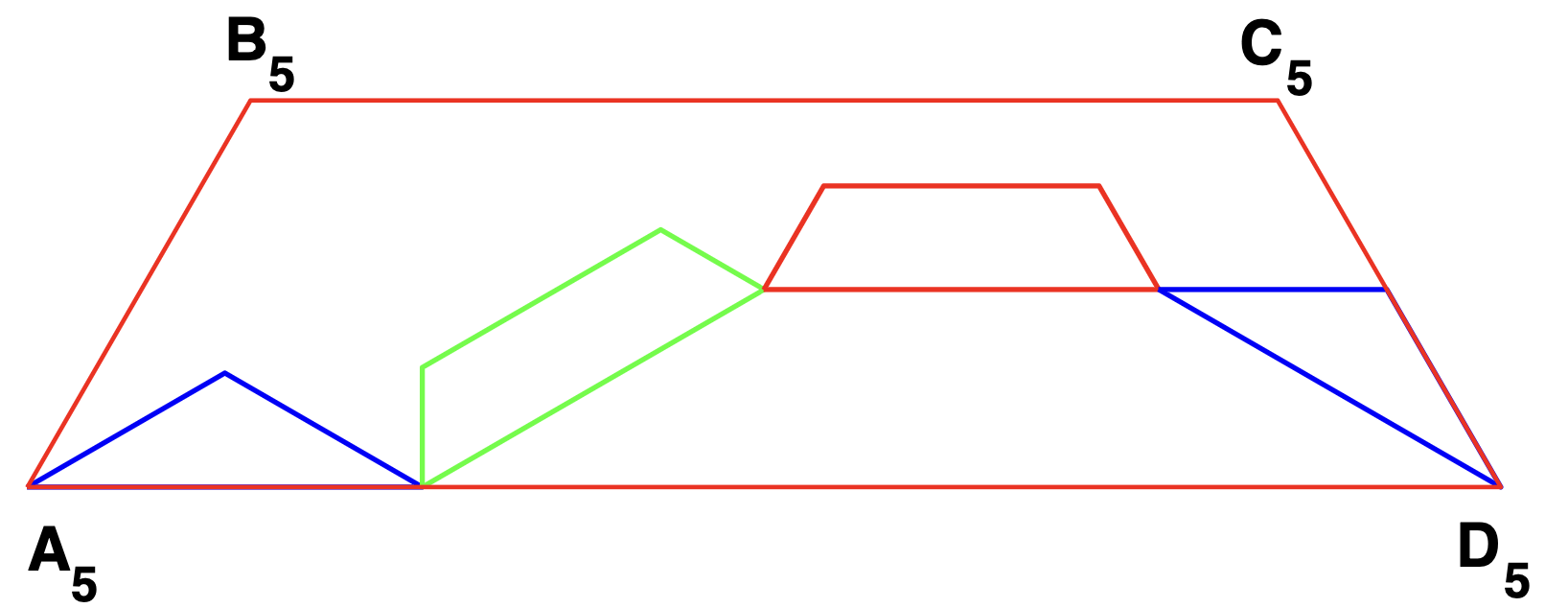}~\includegraphics[width= 4 cm]{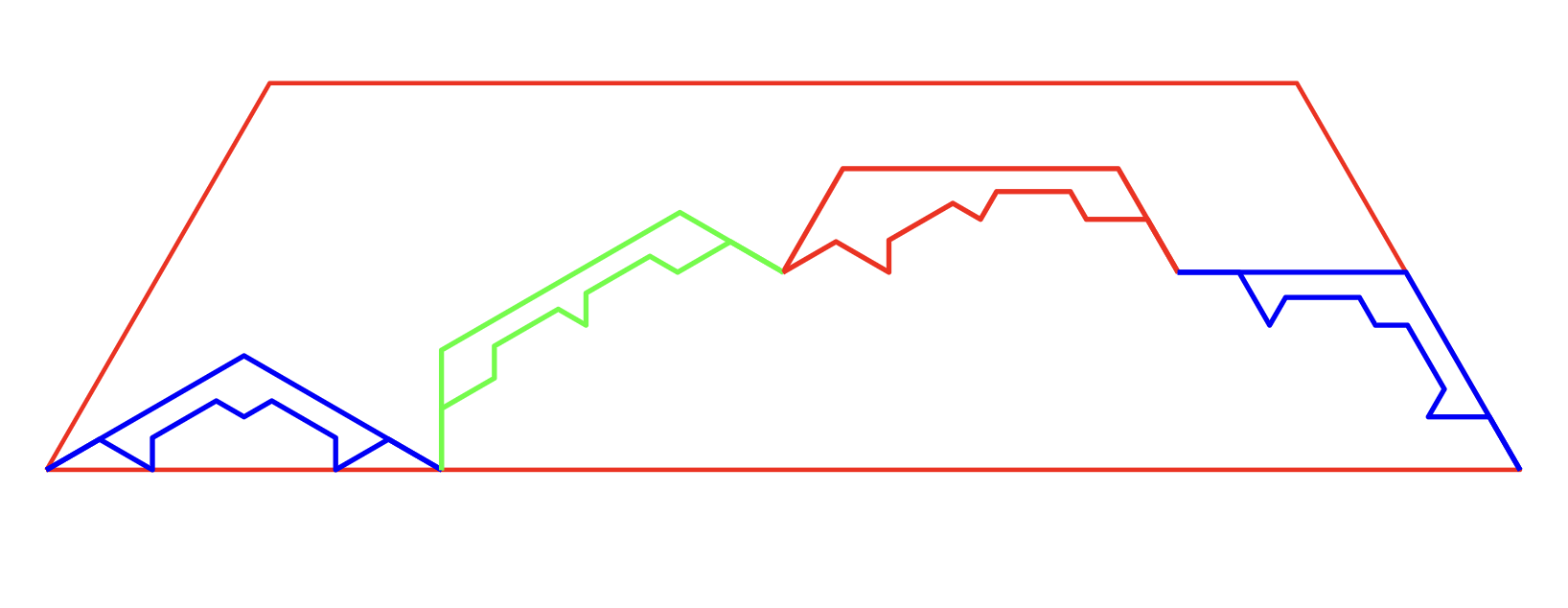}\quad\includegraphics[width= 4 cm]{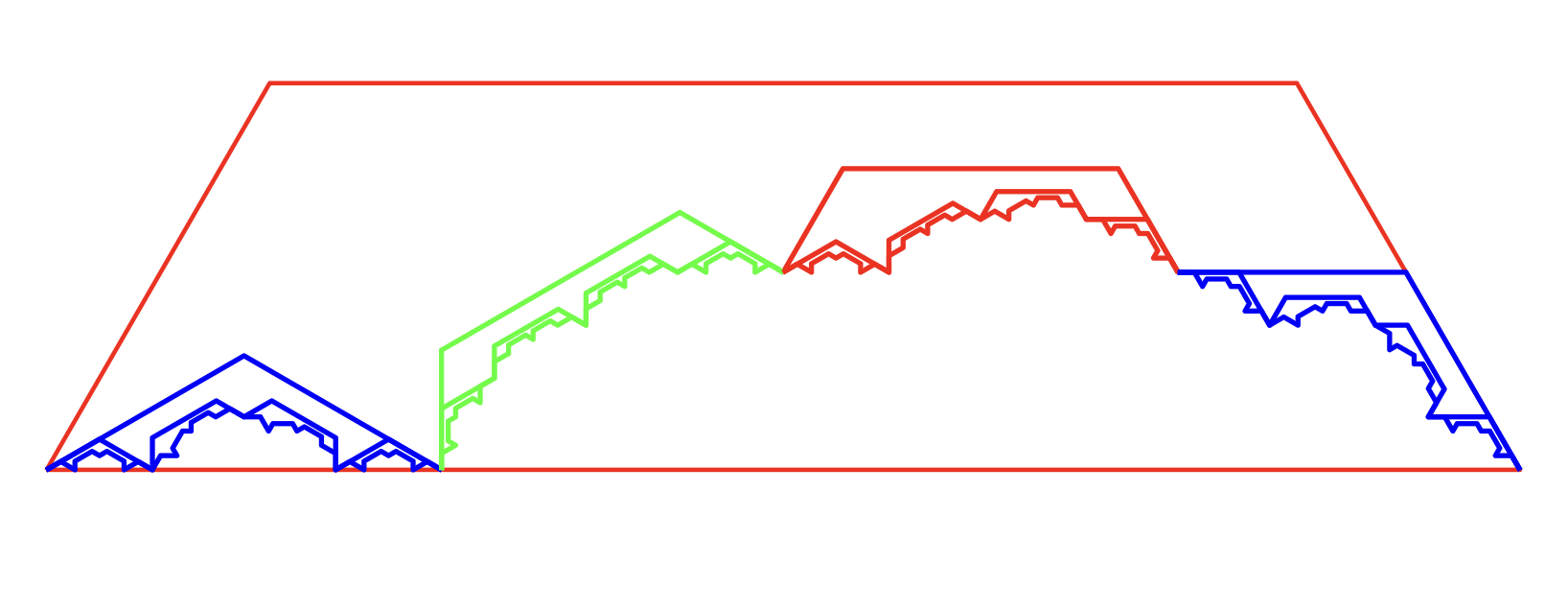}
\caption{Feasible open set for $K_3$: the interior of the isosceles trapezoid same as above Figure \ref{SquareTriangle_tpyeB}.}\label{SquareTriangle_tpyeC}
\end{figure}

Set the tiles related to square triangle tiling substitution by $T_1, T_2, T_3$ 
(produced by $Tr1,Tr2,Tr3$, respectively) and $T_4, T_5$ (produced by $R1, R2$ respectively.) By the same idea with Bronze-mean tiles we know that the boundaries of $T_i$ could be classified by three different types $K_1,K_2$ and $K_3$. There exist set equations on $K_i$ for $i=1,2,3$ as follows.
\begin{align*}
K_1&=g_{11}(K_1)\cup g_{13}(K_3)\cup g_{13}'(K_2)\cup g_{11}'(K_1),\\
K_2&=g_{21}(K_1)\cup g_{22}(K_2)\cup g_{22}'(K_2)\cup g_{21}'(K_1),\\
K_3&=g_{31}(K_1)\cup g_{32}(K_2)\cup g_{33}(K_3)\cup g_{31}'(K_1).
\end{align*}
Let $O_1$ be the interior of triangle $A_3B_3C_3$ in Figure \ref{SquareTriangle_tpyeA} and $O_2$ (and $O_3$) be the interior of the trapezoid $A_4B_4C_4D_4$ (and $A_5B_5C_5D_5$) in Figure \ref{SquareTriangle_tpyeB} and \ref{SquareTriangle_tpyeC}.  Thus $O_1,O_2$ and $O_3$ have the following inclusion relations which show the open set condition and satisfy the linear IFS condition.
\begin{align*}
 g_{11}(O_1)\cup g_{13}(O_3)\cup g_{13}'(O_2)\cup g_{11}'(O_1)&\subset O_1,\\
g_{21}(O_1)\cup g_{22}(O_2)\cup g_{22}'(O_2)\cup g_{21}'(O_1)&\subset O_2,\\
g_{31}(O_1)\cup g_{32}(O_2)\cup g_{33}(O_3)\cup g_{31}'(O_1)&\subset O_3.
\end{align*}

\end{ex}

\begin{ex}[{\color{blue}DZ substitution for $n=1,2,\dots$}]
\label{DZsubst}
Bronze mean substitution is generalized to a one-parameter family of overlapping substitutions as in Figure \ref{DZ}
where we use the right triangle $ST$ of length $1$, and the right triangle $BT$
of length $a=(1+\sqrt{13})/(2\sqrt{3})\approx 1.3295$,
and the rectangle $R$ of size $1\times a$.

The idea is to insert more rectangles into the overlapping boundary to form a larger substitution. One can check that 
the interior
is automatically decided from the shape of the expected boundaries. The substitution matrix of DZ substitution of level
$n$ is
$$
A_n=
\begin{pmatrix}
n^2 & 3 & 4n\\
3 & (1+n)^2 &4(1+n)\\
\frac{3n}2&\frac{3(1+n)}2& 3+n+n^2\\
\end{pmatrix}.
$$
These matrices are simultaneously diagonalized. Setting
$$
P=\begin{pmatrix}
4& \frac{-1 - \sqrt{13}}3 & \frac{-1 + \sqrt{13}}3\\
-4& \frac{1 - \sqrt{13}}3 & \frac{1 + \sqrt{13}}3\\ 
1& 1 & 1\\
\end{pmatrix},
$$
we have
$$
P^{-1}A_nP=
\begin{pmatrix}
-3 + n + n^2& 0& 0\\ 
0 & \left(\frac{2n+1- \sqrt{13}}2\right)^2& 0\\ 
0 & 0 & \left(\frac{2n+1+ \sqrt{13}}2\right)^2 \\
\end{pmatrix}
$$
and therefore $A_n$ an $A_m$ for $n\neq m$ are commutative and share the same eigenvectors.
The expansion factor of DZ substitution of level $n$ is $(2n+1+\sqrt{13})/2$, and its square
coincides with the Perron-Frobenius root of $A_n$. This is consistent with the criterion of
Lagarias-Wang \cite{Lagarias-Wang:03}. The common Perron-Frobenius right eigenvector 
$((-1+\sqrt{13})/3, (1+\sqrt{13})/3, 1)^T$ gives the frequency of tiles $ST, BT, R$ independent of $n$.
We can also check that the area of tiles $(\sqrt{3}/4,\sqrt{3}a^2/4,a)$ gives 
the common Perron-Frobenius left eigenvector. 
Similarly to Example \ref{Dotera}, we can check the consistency of these substitutions as in Figure \ref{DZ_OSC}
where we put the right triangle of size $a$ on the segment of length $a$ and the unit square on the unit segment.
One can easily verify that these feasible open sets satisfy the linear GIFS condition.

\begin{figure}[h]
    \centering
\includegraphics[width=12 cm]{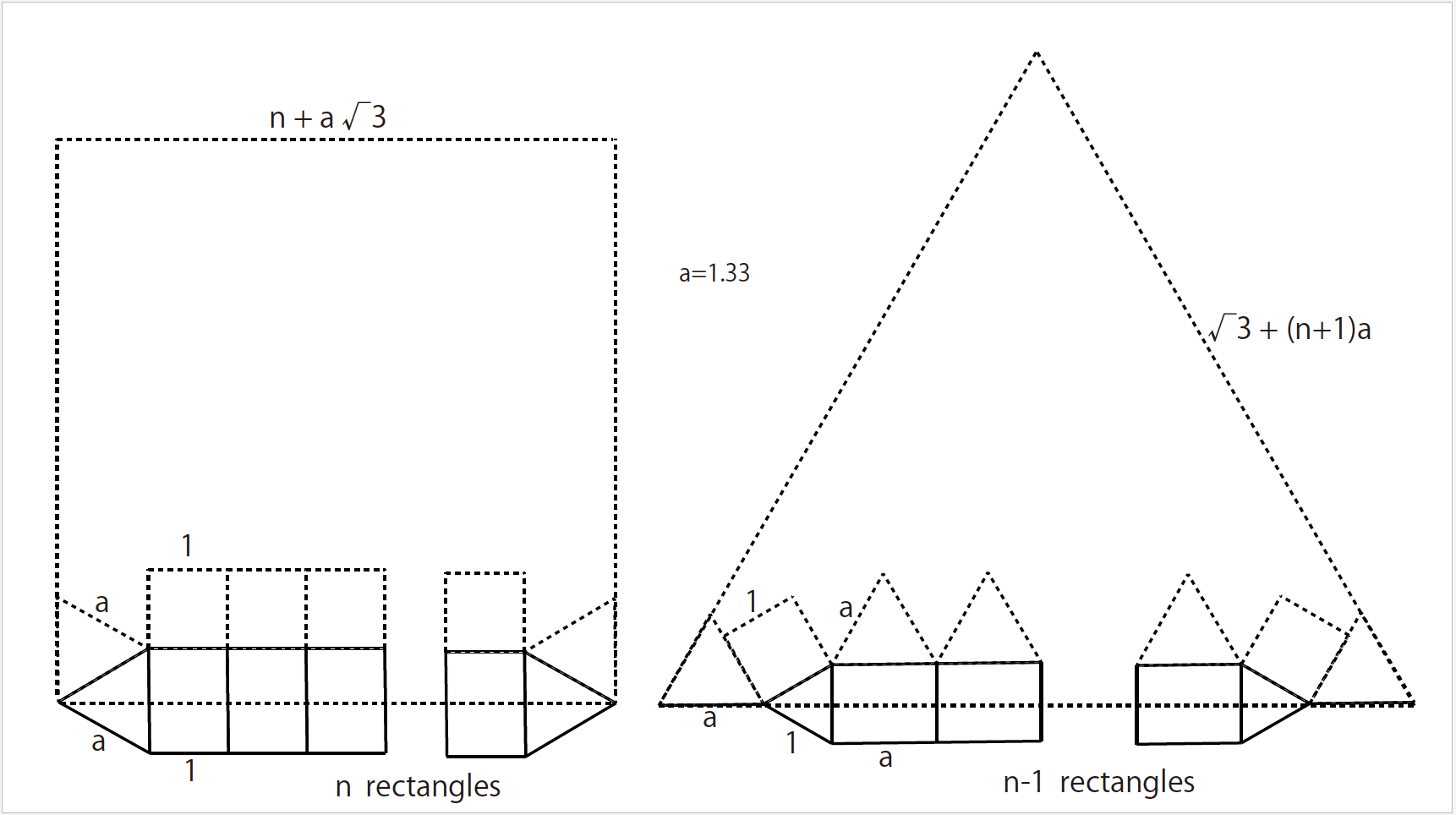}
    \caption{Feasible open sets for DZ substitutions\label{DZ_OSC}}
\end{figure}

This example is amazing in itself. We can use the same $ST, BT, R$ to define infinitely many
substitutions having different expansion constants. One can apply different substitutions at each step 
to obtain $S$-adic tilings which share the same frequency of tiles!
\end{ex}

\begin{rem}
Inspired by Example \ref{DZsubst}, we may create
a family of 1-dimensional substitutions
having two parameters $a,b$
with similar properties.
Put $m,c\in \Z$ with $m>c^2$. The matrices
$$
A_{a,b}:=\begin{pmatrix}
a-bc & b\\
b(m-c^2)& a+bc
\end{pmatrix}
$$
with $a,b\in \N$ and $a\ge |bc|$
are simultaneously diagonalized. Putting
$$
Q=
\begin{pmatrix}
\sqrt{m}+c& -1\\
\sqrt{m}-c& 1
\end{pmatrix},
$$
we have
$$
Q A_{a,b} Q^{-1}
=
\begin{pmatrix}
a-b\sqrt{m}&0\\
0& a+b\sqrt{m}
\end{pmatrix}.
$$
The substitution
$$
1\to \overbrace{1\dots1}^{a-bc}\overbrace{2\dots 2}^{b(m^2-c)},\quad
2\to \overbrace{1\dots1}^{b}\overbrace{2\dots 2}^{a+bc}.
$$
can be realized as tiling substitution using 
intervals of length $\sqrt{m}-c$ and $1$. Of course, the order of $1$ and $2$ can be arbitrary changed. The frequency of tiles is  $1:\sqrt{m}+c$
independent of $a$
and $b$.
\end{rem}

{\color{blue}We note that this OSC method described in this section is not satisfactory.
Figure \ref{BronzeSupple} gives a similar but different substitution inspired by Bronze-mean substitution, whose expansion constant is $\sqrt{2}+\sqrt{3}$. 
Edge lengths are $1$ and $\sqrt{2}$, and its substitution matrix is 
irreducible with period two.  
We can show that this substitution is consistent 
by classifying all local configurations
of adjacent edges under iteration. However, the transition graph associated with these pairs
of edges shows that the IFS of the boundary does not satisfy OSC. }

\begin{figure}[h]
    \centering
\includegraphics[width=3.5 cm]{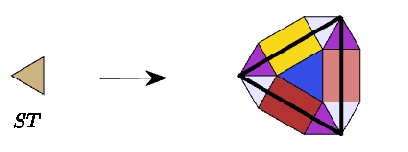}\includegraphics[width=3.5 cm]{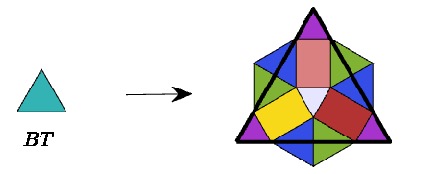}\includegraphics[width=3.5 cm]{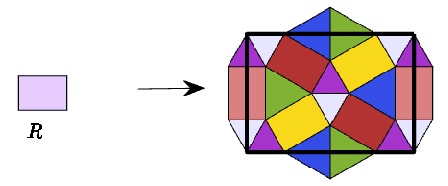}
    \caption{Consistent overlapping substitution without open set condition}
    \label{BronzeSupple}
\end{figure}

\section{Construction of overlapping substitutions}
\label{Sec5}


{\color{blue} In this section, we first give a general idea for constructing one-dimensional overlapping substitutions, which may depend on one or two parameters.  We also construct overlapping substitutions in higher dimensions from Delone sets with inflation symmetry.}

\subsection{A construction of one-dimensional
overlapping substitutions}

\label{section_construction_one-dim}
In this section, we show a way to construct
interesting one-dimensional overlapping
substitutions.

\textcolor{blue}{Let us briefly explain the main idea. First, we consider a symbolic weighted substitution (the definition will be given Step 1 below), such as in Example \ref{ex1_one-dim-substi}.
Then, we consider a geometric alphabet consisting of three tiles
$T_a, T_b, T_c$, 
which correspond to the letters $a,b,c$,
and, as in Figure \ref{Ex1Subst}, an inflation rule such that}
\begin{enumerate}
    \item the inflation of $T_a$ consists of $T_a, T_b$ and another $T_a$, from left to right, where the left half of the left-most $T_a$ protrudes from the expansion of the original $T_a$,
    \item the inflation of $T_b$ consists of $T_c$ and $T_a$, from left to right, where the right  half of $T_a$ protrudes from the expansion of the original $T_b$, and
    \item the inflation of $T_c$ consists of $T_b$ and $T_a$, from left to right, where the right half of $T_a$ protrudes from the expansion of $T_c$.
\end{enumerate}
We have to choose appropriate tile lengths and an expansion factor, but this is possible by choosing the Perron--Frobenius eigenvalues and eigenvectors for the substitution matrix \eqref{matrix_ex1_symbolic_weighted_substi}.

However, we have to be careful because if a word $aa$ appears, then there will be an illegal overlap in the inflation. For this reason, we have to check all the possible adjacency rules, which we can do when we draw the adjacency graph in Step 2 below.

\textcolor{blue}{We now discuss the general construction. We divide the construction into four steps,
as follows, where we also give an example for an understanding.}

\emph{\underline{Step 1.}: construct a symbolic overlapping substitution.}
{\color{blue}A symbolic overlapping substitution $\sigma$ is a (symbolic) weighted substitution
 with an additional condition, as we explain below} (c.f. \cite{Kamae:05}).
 A weighted substitution on an alphabet
 $\mathcal{A}$ is a rule which maps
 $a\in\mathcal{A}$ to a string
 \begin{align*}
     [a_1]_{r_1}[a_2]_{r_2}\cdots [a_{m}]_{r_{m}},
 \end{align*}
 where $m>0, a_i\in\mathcal{A}$ and
 $0<r_i\leqq 1$.
We call such a map $\sigma$ a
symbolic overlapping substitution
if it satisfies the conditions.
\begin{enumerate}
    \item if $m=1$, then $r_1=1$.
    \item if $m>1$, then $r_i=1$ for any $1<i<m$.
\end{enumerate}
We omit the corresponding bracket of $[a_i]_{r_i}$ and simply denote by $a_i$  if $r_i$ is 1.

\emph{\underline{Step 2:} draw the adjacency graph for a symbolic weighted substitution.}
We will draw adjacency graphs $G_k$, $k=1,2,\ldots$, as follows.
They have the common vertex set $\mathcal{A}$.
The edges are defined in the following way.
First, we draw an edge $e\rightarrow f$
for $e,f\in\mathcal{A}$ if $[e]_r[f]_s$ is a subword
of the image of a letter by $\sigma$.
The graph
thus obtained is denoted by $G_1$.

Next, when we obtained $G_k$, we define $G_{k+1}$. 
In addition to edges in $G_k$, we draw 
an edge $e\rightarrow f$
if there is an edge $h\rightarrow g$ in $G_k$
such that 
    $\sigma(h)$ ends with $[e]_1$ and
    $\sigma(g)$ starts with $[f]_1$.

The growing graphs $G_1,G_2\ldots$ eventually stabilize
and the final graph is called the adjacency graph $G$ for
$\sigma$. 
This $G$ is said to be consistent if whenever
$e\rightarrow f$ is an edge in $G$, we have either
\begin{itemize}
    \item The last weight of $\sigma(e)$ and
    the first weight of $\sigma(f)$ are both 1, or
    \item The word $\sigma(e)$ ends with $[h]_r$ and
    $\sigma(f)$ starts with $[h]_{1-r}$, for some 
     $h\in\mathcal{A}$ and $r\in (0,1)$.
\end{itemize}
We only consider symbolic overlapping
substitutions with consistent adjacency graphs,
since this is necessary for the consistency for
the resulting geometric overlapping substitutions.

In practice, when we try to construct an 
example of geometric overlapping substitution by this
way, we usually first construct a graph $G$
and then find a symbolic overlapping substitution
with $G$ as a consistent adjacency graph.


{\bf \textit{Adjacency graphs for Example \ref{ex1_one-dim-substi}.}}
By the definition of $G_k$, we have the $G_1$ and $G_2$ as in the following Figure \ref{Ex1-Gk}. Since $G_3=G_2$, we have $G_k=G_2$ for all $k\geq 3$.

\begin{figure}[h] 
\hskip 0.1cm \xymatrix{
G_1:& *++[o][F]{a}  
\ar@/^{1ex}/[r]& 
*++[o][F]{b}\ar@/^{1ex}/[l]& G_2=G_k: &*++[o][F]{a}  
\ar@/^{1ex}/[r]\ar@/^{1ex}/[d]& 
*++[o][F]{b}\ar@/^{1ex}/[l]\\
&*++[o][F]{c}\ar[u]&& &*++[o][F]{c}\ar@/^{1ex}/[u]& 
}
\caption{$G_1, G_2$ and $G_k$ for $k\geq 3$.}\label{Ex1-Gk}

\end{figure}
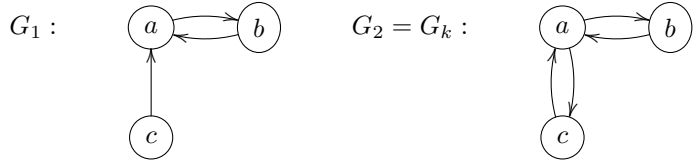

\emph{\underline{Step 3.}: compute the substitution matrix for the symbolic overlapping substitutions.}
Given a symbolic overlapping substitution $\sigma$ with alphabet $\mathcal{A}=\{1,2,\ldots ,\kappa\}$, we define its substitution matrix.
It is a $\kappa\times \kappa$ matrix $M$ such that
\begin{align*}
    M_{i,j}=
\sum_{k\colon a_k=i}r_k
\end{align*}
where the image of $j$
is written as
\begin{align*}
    \sigma(j)=[a_1]_{r_1}[a_2]_{r_2}\cdots [a_k]_{r_k}.
\end{align*}
In other words, we count the number of
appearances of $i$ in the image of $j$,
while regarding the appearances at the beginning
and the end are counted by their weights.
 
 The symbolic overlapping substitution is said to be
 primitive if its substitution matrix is
 primitive.
 
{\bf\textit{The substitution matrix for Example
    \ref{ex1_one-dim-substi}.}}  is given by \eqref{matrix_ex1_symbolic_weighted_substi}.

 \emph{\underline{Step 4}: construct the corresponding overlapping (geometric) substitutions.}
 Assume the substitution matrix $M$ for a symbolic
 overlapping substitution $\sigma$ is primitive.
 Let $\beta$ be the Perron--Frobenius eigenvalue
 and $(l_1,l_2,\ldots ,l_{\kappa})$ be a Perron--Frobenius
 left eigenvector with positive entries.
 We construct a set of tiles
 $\{T_1,T_2,\ldots ,T_{\kappa}\}$ by
 $T_i=([0,l_i],i)$. 
 By definition, we have
 \begin{align*}
     \sum_{j=1}^{\kappa}\sum_{i\colon a_i=j}r_il_j=\beta l_k,
 \end{align*}
 if
 \begin{align*}
     \sigma(k)=[a_1]_{r_1}[a_2]_{r_2}\cdots [a_k]_{r_k}.
 \end{align*}
 We then define a (geometric)
 pre-overlapping substitution $\rho$ by
 \begin{align*}
     \rho(T_i)=\left\{T_{a_1}, T_{a_2}+l_{a_1},
     \ldots ,T_{a_k}+\sum_{j=1}^{k-1}l_{a_j}\right\}-(1-r)l_{a_1},
 \end{align*}
 where
 \begin{align*}
     \sigma(i)=[a_1]_ra_2a_3\cdots a_{k-1}[a_k]_s.
 \end{align*}
 In other words, we juxtapose the copies of alphabet
 by the order of $\sigma$ and translate every tile by
 $(1-r)l_{a_1}$, so that a part of the initial $T_{a_1}$
 ``sticks out'' from the enlarged original tile $[0,\beta l_i]$
 by the ratio $1-r$.
 We see a part of the final tile $T_{a_k}$ also
 ``sticks out'' 
 from the enlarged tile $[0,\beta l_i]$
 by the ratio $1-s$ (if $s<1$), by the
 computation
 \begin{align*}
     \beta l_i=rl_{a_1}+l_{a_2}+l_{a_3}+\cdots +l_{a_{k-1}}+sl_{a_k}.
 \end{align*}
{\bf \textit{The (geometric) overlapping substitution for Example \ref{ex1_one-dim-substi}.}}
It is easy to know that the Perron--Frobenius eigenvalue of $M$ in Example \ref{ex1_one-dim-substi} is $2$ and the corresponding left eigenvector is $(l_a,l_b,l_c)=(2,1,1)$. Then we have $$T_a=([0,l_a],1), T_b=([0,l_b],2) \text{ and } T_c=([0,l_c],3).$$

And the (geometric) overlapping substitution $\rho$ (see Figure \ref{Brokenline5.1}) by
$$\left\{\begin{aligned}
\rho(T_a)&= \{[0,2],[0,1]+l_{a},[0,2]+l_a+l_b\}-(1-1/2) l_a= \{[-1,1],[1,2],[2,4]\}\\
\rho(T_b)&=\{[0,1],[0,2]+l_c\}-(1-1)l_c=\{[0,1],[1,3]\}\\
\rho(T_c)&=\{[0,1],[0,2]+l_b\}-(1-1) l_b=\{[0,1],[1,3]\}
\end{aligned}
\right.
$$ 
\begin{figure}[h]
\centering
\includegraphics[width=10 cm]{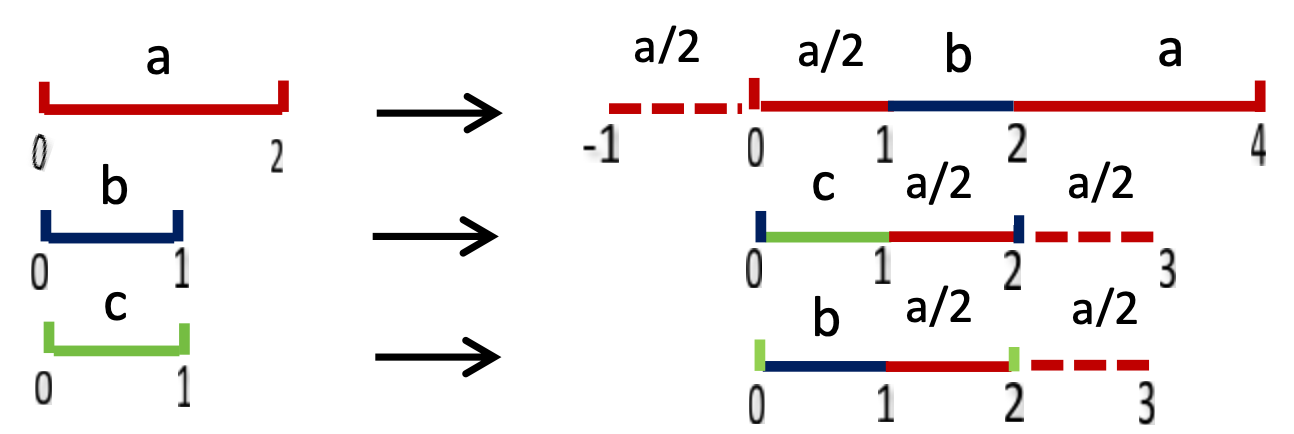}
\caption{Geometric overlapping substitution}\label{Brokenline5.1}
\end{figure}
by
$$\left\{\begin{aligned}
\sigma(a)&= [a]_{\frac{1}{2}}[b]_1[a]_1\\
\sigma(b)&=[c]_1[a]_{\frac{1}{2}}\\
\sigma(c)&=[b]_1[a]_{\frac{1}{2}}
\end{aligned}
\right.,
$$

 In the next proposition, we show that the (geometric)
 pre-overlapping substitution $\rho$ constructed
in this way is consistent. 

 
 \begin{prop}
     The pre-overlapping substitution $\rho$ is consistent.
 \end{prop}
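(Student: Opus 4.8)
The plan is to show that the consistency of the adjacency graph $G$ for the symbolic overlapping substitution $\sigma$ propagates, under the geometric realization $T_i = ([0,l_i],i)$, to the absence of illegal overlaps in every iterate $\rho^n(T_i)$. First I would set up the bookkeeping: given a finite word $w = [a_1]_{r_1}[a_2]_{r_2}\cdots[a_m]_{r_m}$ produced by iterating $\sigma$ (where by the structure conditions only $r_1$ and $r_m$ can differ from $1$), I would associate to it a \emph{geometric realization} $\widehat{w}$, a pattern of translated proto-tiles obtained by juxtaposing $T_{a_1},\dots,T_{a_m}$ and then shifting by $-(1-r_1)l_{a_1}$, exactly as in the definition of $\rho(T_i)$. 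The key structural observation is that $\rho^n(T_i)$ is the geometric realization of the word $\sigma^n(i)$, provided that when two consecutive images $\sigma(e)$ and $\sigma(f)$ are concatenated at an edge $e\to f$ of an adjacency graph, the overlapping boundary tiles fuse consistently — and this is precisely what the consistency of $G$ guarantees: either both touching weights are $1$ (plain concatenation, no overlap) or $\sigma(e)$ ends in $[h]_r$ and $\sigma(f)$ starts in $[h]_{1-r}$, so the two copies of $T_h$ land on exactly the same interval and merge into one tile.

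Second, I would prove by induction on $n$ that $\rho^n(T_i)$, viewed as a pattern of intervals, has the property that its tiles have pairwise disjoint interiors — i.e.\ it is a patch — and moreover that its support is an interval of length $\beta^n l_i$ with the first tile sticking out on the left by $(1-r)l_{a_1}$ and the last tile sticking out on the right by $(1-s)l_{a_k}$, where $[a_1]_r\cdots[a_k]_s$ is the relevant word. The base case $n=1$ is the explicit definition of $\rho(T_i)$; one checks directly from $\beta l_i = rl_{a_1} + l_{a_2}+\cdots+l_{a_{k-1}}+sl_{a_k}$ that the intervals tile $[-(1-r)l_{a_1},\ \beta l_i + (1-s)l_{a_k}]$ — here in one dimension, two intervals with disjoint interiors simply means that apart from the prescribed sticking-out at the two ends, consecutive intervals only meet at endpoints. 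For the inductive step, I would write $\sigma^{n+1}(i) = \sigma(\sigma^n(i))$ and decompose $\sigma^n(i) = b_1 b_2\cdots b_p$ (with boundary weights), so that $\rho^{n+1}(T_i) = \bigcup_t \rho(T_{b_t} + c_t)$ for the appropriate translations $c_t$. Each $\rho(T_{b_t}+c_t)$ is an interval pattern of total length $\beta l_{b_t}$ by the base case; the only possible illegal overlaps are between consecutive blocks $\rho(T_{b_t})$ and $\rho(T_{b_{t+1}})$, and there $b_t\to b_{t+1}$ is an edge of the adjacency graph, so by consistency of $G$ either the blocks abut exactly or the right-sticking-out tile of the first block and the left-sticking-out tile of the second block are translates of the same proto-tile occupying the same interval — hence they coincide as tiles rather than overlapping illegally. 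The arithmetic that the sticking-out lengths match is exactly the identity $r + (1-r) = 1$ scaled by $\beta l_h$, and the interior-block tiles never stick out, so no overlap can reach them.

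The main obstacle, and the step deserving the most care, is the bookkeeping of the translations: one must verify that when $\sigma(e)$ ends with $[h]_r$ and $\sigma(f)$ begins with $[h]_{1-r}$, the copy of $T_h$ at the right end of $\rho(T_e + c_t)$ and the copy at the left end of $\rho(T_{f} + c_{t+1})$ are not merely translates of the same proto-tile but are the \emph{same translate}, i.e.\ occupy literally the same interval $[\alpha,\alpha+l_h]$. This requires tracking the left-shift $-(1-r)l_{a_1}$ applied inside each $\rho$-block together with the cumulative offsets $c_t = \sum_{u<t}(\text{length contributed by block }u)$, and checking that the "overhang on the right of block $t$" equals $(1-r_h)l_h$ while the "recession of the left edge of block $t+1$" is $(1-(1-r_h))l_h = r_h l_h$ measured from the nominal junction point — so the two $T_h$ copies coincide. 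Once this alignment identity is established, finite local complexity and the patch property at every finite level follow immediately, and since the adjacency graphs $G_k$ stabilize (so consistency of $G$ controls all levels), $\rho$ is consistent. For Example \ref{ex1_symb_over_substi} one can moreover just exhibit the first few iterates computed in the introduction ($\sigma(ba) = caba$, etc.) as a sanity check that the merging of the two $a/2$'s into one $a$ is exactly the mechanism above.
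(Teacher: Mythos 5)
Your overall strategy --- induction on $n$, carrying the patch property and the support/overhang description of $\rho^n(T_i)$, plus the alignment computation showing that at a fused junction the two copies of $T_h$ occupy literally the same interval (right overhang $(1-r)l_h$ of one block versus left recession $r\,l_h$ of the next) --- is the same as the paper's, and that geometric bookkeeping is correct. But there is a genuine gap at the crux of the inductive step: you assert that for consecutive tiles $T_{b_t},T_{b_{t+1}}$ in $\rho^n(T_i)$ the pair $b_t\to b_{t+1}$ is an edge of the adjacency graph $G$, and this is exactly what your induction hypothesis (patch property plus support description) does not give you. The consistency assumption on $G$ only controls junctions whose letter pair is an edge of $G$; if some adjacency realized geometrically at level $n$ were not an edge of $G$, nothing would rule out an illegal overlap there, so the appeal to consistency is unjustified at precisely the point where it is needed. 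Stabilization of the $G_k$ does not supply this either: it is a statement about the symbolically defined graphs, not about which adjacencies actually occur in $\rho^n(T_a)$.

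The repair --- and this is how the paper argues --- is to run a coupled induction: alongside consistency of $\rho^k(T_a)$, carry the claim that the adjacency types occurring in $\rho^{\nu}(T_a)$ for $\nu\leq k$, $a\in\mathcal{A}$, form exactly the edge set of $G_k$ (the paper encodes this as an auxiliary graph $G_k'$ and the statement $G_k=G_k'$). In the step one checks that adjacencies of $\rho^{k+1}(T_a)$ are of two kinds: those internal to a single block $\rho(T_{b_t})$, or created at a fused junction where the merged tile's neighbours lie inside one $\sigma$-image --- these are already edges of $G_1$; and those created at a clean-abutment junction (both boundary weights $1$), namely (last letter of $\sigma(b_t)$) $\to$ (first letter of $\sigma(b_{t+1})$) --- and these are exactly the edges added in passing from $G_k$ to $G_{k+1}$, using that $b_t\to b_{t+1}$ is an edge of $G_k$ by the strengthened hypothesis. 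With that addition your junction computation closes the argument; as written, the key assertion is assumed rather than proved.
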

 \begin{proof}
     Define another graph
     $G'_k$, $k=1,2,\ldots,$ as follows.
     The set of vertices for $G_k'$ is $\mathcal{A}$. If
     for some $\nu=1,2,\ldots ,k$ and $a\in\mathcal{A}$, a translate of the patch $\{T_i,T_j+l_i\}$ appears in $\rho^{\nu}(T_a)$,
     we draw an edge $i\rightarrow j$ for
     $G_k'$. We see $G_1=G_1'$.
     Let $k$ be an integer greater than 0. If $G_k=G_k'$ and
     $\rho^k(T_a)$ is consistent (that is, there are no illegal overlaps) for each $a\in\mathcal{A}$, then we can show that
     $G_{k+1}=G'_{k+1}$ and
     $\rho^{k+1}(T_a)$ is
     consistent for each $a\in\mathcal{A}.$
 \end{proof}

 In the method above, it is not difficult to construct one-dimensional overlapping substitutions, which may depend on parameters. We finish this section by giving some examples.

 \begin{ex}\label{Example5.2}
Let the alphabet $\mathcal{A}$ be $\{a,b,c,d\}$. Consider a rule $\sigma$ defined as 
\quad\\
$\sigma: \left\{\begin{aligned}
a&\longrightarrow [a]_{r}bca\\
b&\longrightarrow bc\\
c&\longrightarrow d[a]_{1-r}\\
d&\longrightarrow [a]_rbc[a]_{1-r}\\
\end{aligned}
\right.
$ for any $0< r< 1$.
\\

Then the adjacent graphs determined by the above rule are drawn in Figure \ref{Ex2-Gk}.  By the definition, we know that $G_k=G_2$ for $k\geq 3$.

\begin{figure}[h] 
\hskip 0.1cm \xymatrix{
G_1:&*++[o][F]{d}  
\ar@/^{0ex}/[r]& *++[o][F]{a}  
\ar@/^{0ex}/[r]& 
*++[o][F]{b}\ar@/^{0ex}/[ld]& G_k=G_2:&*++[o][F]{d}  
\ar@/^{0ex}/[r]&*++[o][F]{a}  
\ar@/^{0ex}/[r]& 
*++[o][F]{b}\ar@/^{0ex}/[ld]& \\
&&*++[o][F]{c}\ar[u]&& &&*++[o][F]{c}\ar@/^{0ex}/[u]\ar@/^{0ex}/[ul]& &
}
\caption{$G_1, G_2$ and $G_k$  for $k\geq 3$.} \label{Ex2-Gk}

\end{figure}
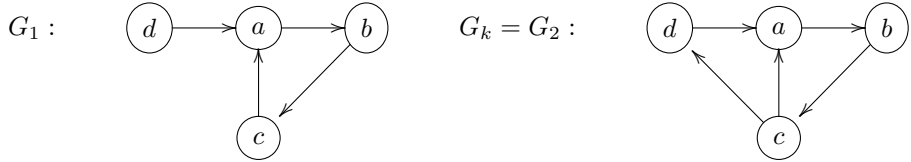
The substitution matrix is 
$$M=\left(\begin{matrix}
1+r& 0 & 1-r & 1 \\
1&1& 0&1 \\
1& 1 & 0 & 1 \\
0& 0 &1 & 0 \\
\end{matrix}
\right).
$$
And the characteristic polynomial is $P_M(x)=x^4-(2+r)x^3+(2r-1)x^2+rx$.\\
The Perron--Frobenius eigenvalue is $\beta=1+\sqrt{2}$ and the right eigenvector for $\beta$ is $(1-\frac{\sqrt{2}}{2}, 1-\frac{\sqrt{2}}{2}, 1-\frac{\sqrt{2}}{2}, \frac{3\sqrt{2}}{2}-2)$ but the left eigenvector 
$$(l_a,l_b,l_c,l_d)=\left(\frac{\beta}{\beta-r},\frac{\beta-r-1}{\beta-r},\sqrt{2}\frac{\beta-r-1}{\beta- r},1\right)$$
varies by the parameter $r$. We take $r=\sqrt{2}/2$ as an example then the left eigenvector for $\beta$ is $(l_a,l_b,l_c,l_d)=(\sqrt{2},\sqrt{2}-1,2-\sqrt{2},1)$. Then we have 
$$T_a=([0,l_a],1), T_b=([0,l_b],2), T_c=([0,l_c],3),T_d=([0,l_d],4).$$
So the (geometric) overlapping substitution $\rho$ is
$$\left\{\begin{aligned}
\rho(T_a)&= \{[0,l_a],[0,l_b]+l_{a},[0,l_c]+l_a+l_b,[0,l_a]+l_a+l_b+l_c\}-(1-r) l_a\\
&= \{[1-\sqrt{2},1],[1,\sqrt{2}],[\sqrt{2},2],[2,2+\sqrt{2}]\}\\
\rho(T_b)&=\{[0,l_b],[0,l_c]+l_b\}-(1-1)l_b=\{[0,\sqrt{2}-1],[\sqrt{2}-1,1]\}\\
\rho(T_c)&=\{[0,l_d],[0,l_a]+l_d\}-(1-1) l_d=\{[0,1],[1,1+\sqrt{2}]\}\\
\rho(T_d)&=\{ [0,l_a],[0.l_b]+l_a,[0,l_c]+l_a+l_b,[0,l_a]+l_a+l_b+l_c\}-(1-r) l_a\\
&= \{[1-\sqrt{2},1],[1,\sqrt{2}],[\sqrt{2},2],[2,2+\sqrt{2}]\}
\end{aligned}
\right.
$$ 
\begin{figure}[h]
\centering
\includegraphics[width=10 cm]{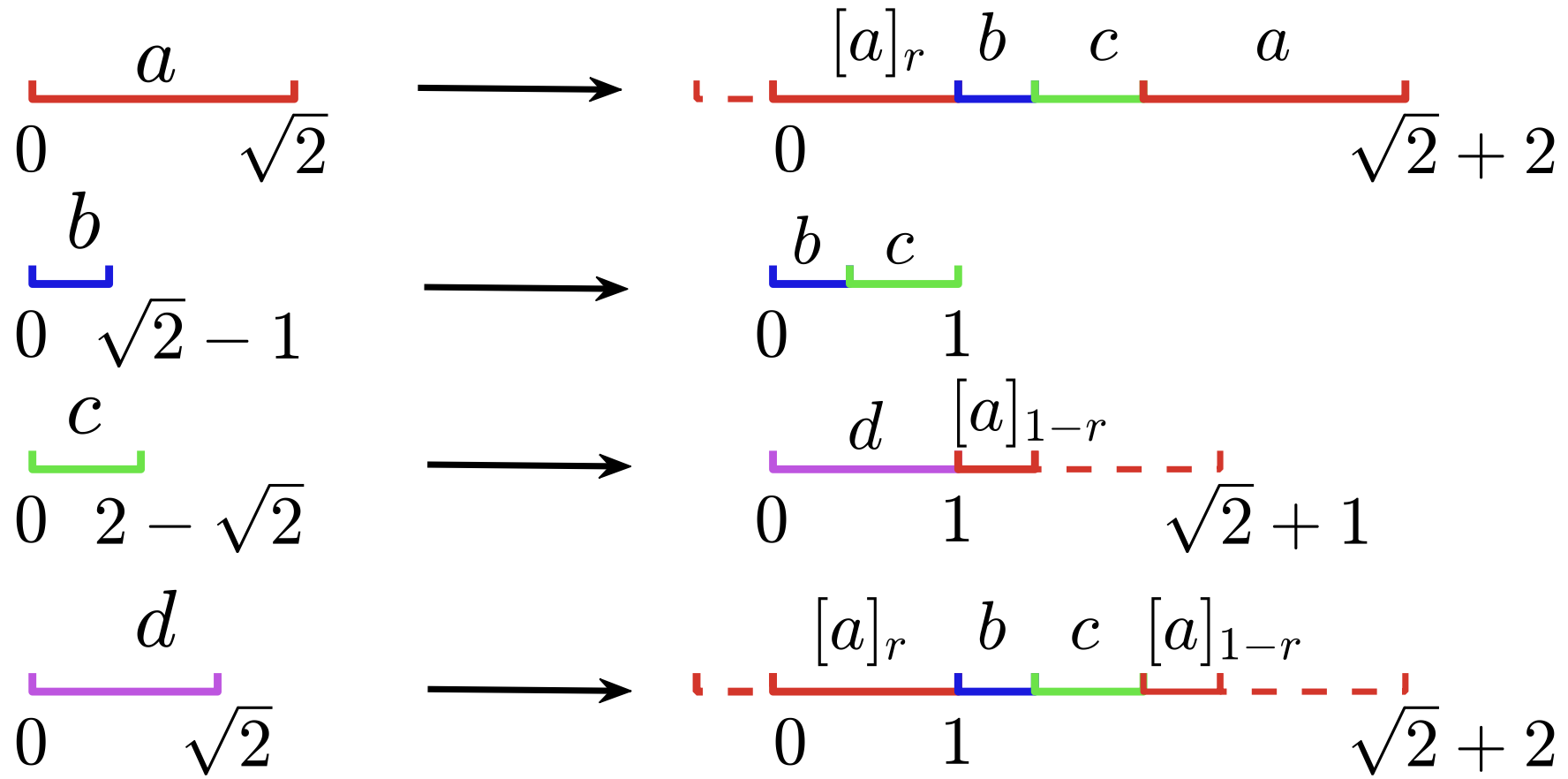}
\caption{Geometric overlapping substitution for Example \ref{Example5.2} when $r=\sqrt{2}/2$.}\label{Brokenline5.2}
\end{figure}


 \end{ex}

\begin{ex}[An example with two parameters]
Let the alphabet $\mathcal{A}$ be $\{a,b,c,d,e\}$. Consider a rule $\sigma$ defined as 

\quad
$\sigma: \left\{\begin{aligned}
a&\longrightarrow [c]_{1-s}abcade[a]_r\\
b&\longrightarrow [a]_{1-r}de[c]_s\\
c&\longrightarrow [c]_{1-s}ab[c]_{s}\\
d&\longrightarrow [a]_{1-r}d\\
e&\longrightarrow eab[c]_s\\
\end{aligned}
\right.
$ for any $0< r,s <1$.

By the above rule we have the adjacent graphs shown in Figure \ref{Ex3-Gk}.

\begin{figure}[h] 
\hskip 0.1cm \xymatrix{
G_1:&*++[o][F]{e}  
\ar@/^{0ex}/[rd]& *++[o][F]{c}\ar@/^{0ex}/[d]
&& G_k=G_1:&*++[o][F]{e}  \ar@/^{0ex}/[rd]&*++[o][F]{c}  
\ar@/^{0ex}/[d]& \\
&*++[o][F]{d}\ar[u] &*++[o][F]{a}\ar[l]\ar[r]&*++[o][F]{b}\ar@/^{0ex}/[lu] &&*++[o][F]{d}\ar@/^{0ex}/[u]& *++[o][F]{a}\ar[l]\ar[r] &*++[o][F]{b}\ar@/^{0ex}/[lu] 
}
\caption{$G_1$ and $G_k$  for $k\geq 2$.}\label{Ex3-Gk}

\end{figure}
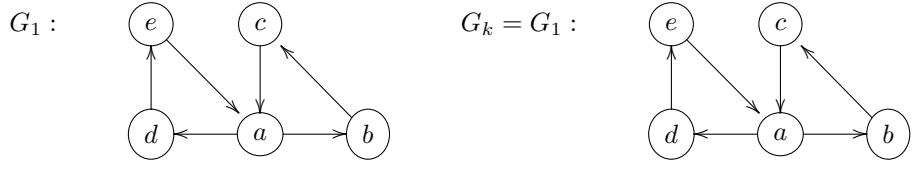

The substitution matrix is 

$$M=\left(\begin{matrix}
2+r& 1-r &1& 1-r & 1 \\
1&0&1& 0&1 \\
2-s&s& 1 & 0 & s \\
1& 1 &0 & 1&0 \\
1& 1 &0 & 0&1 \\
\end{matrix}
\right)
.$$\\
\medskip
So the characteristic polynomial with respect to $M$ is $$P_M(x)=(r - x) (x-1) (x^3- 4 x^2- x +2),$$
 and $\beta$ is the Perrod--Frobenius root $\approx 4.12$  of $x^3-4x^2-x+2=0$.  
A right eigenvector for $\beta$ is 
 $$(\beta,\beta-2,\beta^2-3 \beta-2,2,2).$$
A left Perron--Frobenius eigenvector 
\begin{align*}
&(\beta^2 (7-s)+\beta s-3,(\beta-1) \left(\beta^2
   (1-r)+\beta (r+s)-1\right),(\beta-1) \beta (\beta-r),\\
&(1-r)
   \left(2 \beta^2-\beta s+\beta-1\right),\beta (\beta-r) (\beta+s-1) )
\end{align*}
is a function on $r$ and $s$.
Taking $r=s=1/2$ as an example, we have 
a left eigenvector $$(l_a,l_b,l_c,l_d,l_e)=(2 \left(13 \beta^2+\beta-6\right),2 \beta (5 \beta-3),2 \beta(5 \beta+3)-8,4
   \beta^2+\beta-2,\beta (12 \beta+5)-8).$$
Then we have $T_a=([0,l_a],1), T_b=([0,l_b],2),T_c=([0,l_c],3),T_d=([0,l_d],4), T_e=([0,l_e],5).$
 So we have the geometric overlapping substitution $\rho$  by $\sigma$ in the following way
$$\left\{\begin{aligned}
\rho(T_a)&= \{[0,l_c],[0,l_a]+l_{c},[0,l_b]+l_a+l_c,[0,l_c]+l_a+l_b+l_c,[0,l_a]+l_c+l_a+l_b+l_c,\\
&[0,l_d]+2l_c+2l_a+l_b, [0,l_e]+2l_c+2l_a+l_b+l_d, [0,l_a]+2l_c+2l_a+l_b+l_d+l_e\}-sl_c\\
&= \{[-\frac{l_c}{2},\frac{l_c}{2}],[\frac{l_c}{2},l_a+\frac{l_c}{2}],[l_a+\frac{l_c}{2},l_a+l_b+\frac{l_c}{2}], [l_a+l_b+\frac{l_c}{2},l_a+l_b+\frac{3l_c}{2}],\\
&[l_a+l_b+\frac{3l_c}{2},2l_a+l_b+\frac{3l_c}{2}], [2l_a+l_b+\frac{3l_c}{2},2l_a+l_b+\frac{3l_c}{2}+l_d],\\
&[2l_a+l_b+\frac{3l_c}{2}+l_d,2l_a+l_b+\frac{3l_c}{2}+l_d+l_e], [2l_a+l_b+\frac{3l_c}{2}+l_d+l_e,3l_a+l_b+\frac{3l_c}{2}+l_d+l_e]\}\\
\rho(T_b)&=\{[0,l_a],[0,l_d]+l_a,[0,l_e]+l_a+l_d,[0,l_c]+l_a+l_d+l_e\}-(1-(1-r))l_a\\
&=\{[-\frac{l_a}{2},\frac{l_a}{2}],[\frac{l_a}{2},\frac{l_a}{2}+l_d],[\frac{l_a}{2}+l_d,\frac{l_a}{2}+l_d+l_e], [\frac{l_a}{2}+l_d+l_e,\frac{l_a}{2}+l_d+l_e+l_c]\}\\
\rho(T_c)&=\{[0,l_c],[0,l_a]+l_c,[0,l_b]+l_c+l_a,[0,l_c]+l_c+l_a+l_b\}-(1-(1-s)) l_c\\
&=\{[-\frac{l_c}{2},\frac{l_c}{2}],[\frac{l_c}{2},\frac{l_c}{2}+l_a],[\frac{l_c}{2}+l_a,\frac{l_c}{2}+l_a+l_b], [\frac{l_c}{2}+l_a+l_b,\frac{3l_c}{2}+l_a+l_b]\}\\
\rho(T_d)&=\{ [0,l_a],[0, l_d]+l_a,\}-(1-(1-r)) l_a= \{[-\frac{l_a}{2},\frac{l_a}{2}],[\frac{l_a}{2},\frac{l_a}{2}+l_d]\}\\
\rho(T_e)&=\{[0,l_e],[0,l_a]+l_e,[0,l_b]+l_e+l_a,[0,l_c]+l_e+l_a+l_b\}-(1-1)l_a\\
&=\{[0,l_e],[l_e,l_a+l_e],[l_a+l_e,l_a+l_b+l_e],[l_a+l_b+l_e,l_a+l_b+l_c+l_e]\}
\end{aligned}
\right.
$$ 
here
$\left\{\begin{aligned}
\sigma(a)&= [c]_{\frac{1}{2}}abcade[a]_{\frac{1}{2}}\\
\sigma(b)&= [a]_{\frac{1}{2}}de[c]_{\frac{1}{2}}\\
\sigma(c)&= [c]_{\frac{1}{2}}ab[c]_{\frac{1}{2}}\\
\sigma(d)&= [a]_{\frac{1}{2}}d\\
\sigma(e)&= eab[c]_{\frac{1}{2}}\\
\end{aligned}
\right.
$
\end{ex}

\subsection{Construction in high-dimensions from Delone sets with inflation symmetry}
\label{subsec_construction_general_dim}

In this subsection, we construct overlapping substitutions from Delone (multi)sets
with inflation symmetry. 
The idea is that, if a Delone (multi)set $X$ has an inflation symmetry,
then the corresponding Voronoi tiling must also have one and yields an overlapping substitution.
A similar idea to construct a self-similar tiling from a point set satisfying a
set equation is found in \cite{Feng-Wen:02} in 
one-dimensional setting. 
Note also that \cite{Solomyak_pseudo-self-similar} gave a construction of substitution from a tiling with inflation symmetry. Our construction is weaker in the sense that the substitutions constructed are
overlapping, but stronger in the sense that the tiles are polygonal.
(In \cite{Solomyak_pseudo-self-similar}, tiles are not necessarily
connected and can have fractal boundaries.)
We first construct Delone sets with
inflation symmetry, and then overlapping
substitutions and tilings.

Here is the setting for this subsection.
We fix a positive integer $d>0$ and a linear map
$A\colon\mathbb{R}^d\rightarrow\mathbb{R}^d$. 
We assume the space $\mathbb{R}^d$
admits a decomposition
$\mathbb{R}^d=E_u\oplus E_s$ into
linear subspaces $E_u,E_s$ such that on
$E_u$, $A$ is expanding, and
on $E_s$, $A$ contracting, and so
there exist $\lambda>1$ and $\lambda'\in (0,1)$ such that
\begin{align*}
    \|A(v)\|\geqq\lambda\|v\|
\end{align*}
for $v\in E_u$ and
\begin{align*}
    \|A(v)\|\leqq\lambda'\|v\|
\end{align*}
for $v\in E_s$. Moreover, we assume
that $A(E_u)\subset E_u$ and
$A(E_s)\subset E_s$.
Let $\pi_u$ be the projection 
of $\mathbb{R}^d$onto
$E_u$ with kernel $E_s$, and
$\pi_s$ be the projection onto $E_s$
with kernel $E_u$.

We take a lattice $L$ of $\mathbb{R}^d$
such that the restriction of $\pi_u$
to $L$ is injective and $AL\subset L$. We have a star-map
\begin{align*}
    \pi_u(L)\ni x=\pi_u(v)\mapsto x^*=\pi_s(v)\in E_s.
\end{align*}

Take a compact subset $K$ of
$E_u$ such that the
union $\Omega=\bigcup_{n=0}^{\infty}A^nK$ is convex and closed under addition. 
We will
construct a Delone subset of $\Omega$,
from which we construct an overlapping
substitution.

We first prove that
\begin{prop}\label{prop1_construction_high-dim}
    Let $D$ be a finite subset of $\pi_u(L)\cap K$ such that
    \begin{align*}
        \bigcup_{d\in D}(A^{-1}(K)+d)\supset K.
    \end{align*}
Then the set
\begin{align*}
    \Lambda=\left\{\sum_{n=0}^NA^n(d_n)\mid N>0, d_n\in D\right\}
\end{align*}
is a Delone subset of $\Omega$ of finite
 local complexity with an
inflation symmetry
\begin{align*}
    \Lambda=\bigcup_{d\in D}A\Lambda+d.
\end{align*}
\end{prop}

\begin{proof}
    For each element 
    \begin{align*}
       x=\sum_{n=0}^NA^nd_n 
    \end{align*}
    of $\Lambda$, the corresponding
    element in $E_s$
    \begin{align*}
        x^*=\sum_{n=0}^NA^nd_n^*
    \end{align*}
    admits a norm estimate
    \begin{align*}
        \|x^*\|\leqq\sum_{n=0}^N(\lambda')^n\|d_n^*\|\\
        \leqq\frac{1}{1-\lambda'}\max_{d\in D}\|d^*\|,
    \end{align*}
    and so for each compact $K\subset E_u$, the set
    \begin{align*}
        (\Lambda-\Lambda)\cap K
    \end{align*}
    is a finite set. This implies 
    $\Lambda$ is uniformly discrete
    and has finite local complexity.

    To prove that $\Lambda$ is relatively
    dense in $\Omega$, take a natural 
    number $n$ and an
    element $x_n\in K$. There exist
    $x_{n-1}\in K$ and $d_{n-1}\in D$
    such that
    \begin{align*}
        x_n=A^{-1}x_{n-1}+d_{n-1}.
    \end{align*}
    There exist $x_{n-2}\in K$ and
    $d_{n-2}\in D$ such that
    \begin{align*}
        x_{n-1}=A^{-1}x_{n-2}+d_{n-2}.
    \end{align*}
    By continuing this process, we obtain
    $x_{n-1},x_{n-2},\ldots$ and
    $d_{n-1},d_{n-2},\ldots$. We see
    \begin{align*}
        x_n=d_{n-1}+A^{-1}d_{n-2}+\cdots
        A^{-n}d_{-1}+A^{-n-1}(x_{-1}),
    \end{align*}
    and
    \begin{align*}
        A^nx_n=A^nd_{n-1}+A^{n-1}d_{n-2}+\cdots d_{-1}+A^{-1}x_{-1}\in\Lambda+A^{-1}K.
    \end{align*}
\end{proof}
\begin{rem}
    By the compactness argument, it is
    easy to prove the existence of such
    a $D$, and for an example of $K$,
    it is often easy to get an example
    of $D$ which satisfies the condition
    of the proposition.
\end{rem}
From $\Lambda$, we construct an overlapping substitution.
We first define the half-spaces: set
\begin{align*}
 H_{x,y}=\{z\in E_u\mid \|x-z\|\leqq \|y-z\|\}
\end{align*}
for each $x,y\in\mathbb{R}^d$.
We will define tiles as finite 
intersections of half-spaces.

For each $x\in \Lambda$, we set the Voronoi cell by
\begin{align*}
 V_x=\{y\in\Omega\mid \text{$\|x-y\|\leqq\|y-z\|$ for any $z\in \Lambda\setminus\{x\}$}\}.
\end{align*}
Take an $L>0$ large enough and  an $x\in X$, and let $T_{x,L}$ be a labeled tile with support $V_x$ and a label that encodes
the local structure of $\Lambda$ within $B(x,L)$.

We set Voronoi tiling $\mathcal{T}_{L}$ via
\begin{align*}
  \mathcal{T}_{L}=\{T_{x,L} \mid x\in \Lambda\}.
\end{align*}

We will find an overlapping substitution
via a cutting-off operation.
A cutting-off operation for patches is defined as follows:
\begin{align*}
 \mathcal{P}\sqcap K=\{T\in\mathcal{P}\mid \supp T\cap K\neq\emptyset\},
\end{align*}
where $\mathcal{P}$ is a patch in $E_u$ and $K\subset E_u$.

\begin{theorem}\label{thm_const_from_Delone}
If $L$ is large enough, 
let us define an alphabet $\mathcal{A}$ via
\begin{align*}
 \mathcal{A}=\{T_{x,L}-x\mid x\in \Lambda\}.
\end{align*}
Then $\mathcal{A}$ is a finite set, 
a map
\begin{align*}
 \rho(T_{x,L}-x)=\mathcal{T}_{L}\sqcap A(\supp T_{x,L})-A(x)
\end{align*}
is well-defined, and $(\mathcal{A}, A,\rho)$ is a consistent overlapping substitution of $E_u$.

Moreover
\begin{align*}
 \rho(\mathcal{T}_{L})=\mathcal{T}_{L}.
\end{align*}
\end{theorem}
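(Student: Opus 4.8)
The plan is to verify three things in turn: (i) that $\rho$ is well defined on the equivalence classes $T_{x,L}-x$; (ii) that $(\mathcal A,A,\rho)$ is consistent, i.e. $\rho^n(P)$ is a patch (no illegal overlaps) for every proto-tile $P$ and every $n\ge 1$; and (iii) the fixed-point identity $\rho(\mathcal T_L)=\mathcal T_L$, which is the $\mathcal T_{X,L}$ of the statement. I would dispose of (ii) and (iii) first, since they are soft, and then concentrate on (i), which is the real content.

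\textbf{Consistency and the fixed point.} The key observation is that $\rho$ never escapes an inflated translate of the Voronoi patch $\mathcal T_L$. From $\rho(T_{z,L}-z)=\mathcal T_L\sci A(V_z)-A(z)$, where $V_z=\supp T_{z,L}$, together with the translation rule $\rho(P+v)=\rho(P)+A(v)$ for a proto-tile $P$, an easy induction on $n$ gives
\begin{align*}
 \rho^n(T_{x,L}-x)=\Bigl(\bigcup_{w\in W_n}\mathcal T_L\sci A(V_w)\Bigr)-A^n(x),
\end{align*}
with $W_1=\{x\}$ and $W_{n+1}=\{u\in X\mid V_u\cap A(V_w)\ne\emptyset\text{ for some }w\in W_n\}$, each $W_n$ finite by local finiteness of the $V_u$ and boundedness of $\bigcup_{w\in W_n}A(V_w)$. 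Hence $\rho^n(T_{x,L}-x)$ is a finite subset of $\mathcal T_L-A^n(x)$, which is a patch, and a subset of a patch is a patch; this proves (ii), and the notion of consistency is unaffected by $\mathcal A$ being infinite, since it only concerns iterating $\rho$ on a single proto-tile. For (iii), $\rho(\mathcal T_L)=\bigcup_{x\in X}\rho(T_{x,L})=\bigcup_{x\in X}\mathcal T_L\sci A(V_x)$, and since $\bigcup_{x}A(V_x)=A\Omega=\Omega=\supp\mathcal T_L$, every tile of $\mathcal T_L$ meets some $A(V_x)$, so $\rho(\mathcal T_L)=\mathcal T_L$.

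\textbf{Well-definedness --- the main obstacle.} I must show that $T_{x,L}-x=T_{y,L}-y$ forces $\rho(T_{x,L}-x)=\rho(T_{y,L}-y)$. Using the translation property of the cutting-off operation and linearity of $A$, rewrite $\rho(T_{x,L}-x)=(\mathcal T_L-A(x))\sci A(V_x-x)$ and likewise for $y$; since the hypothesis includes $V_x-x=V_y-y$, it suffices to prove $(\mathcal T_L-A(x))\sci A(V_x-x)=(\mathcal T_L-A(y))\sci A(V_x-x)$. By Lemma~\ref{lem1_const_from_Delone}, $V_x-x\subset B_{R_0+1}$, whence $A(V_x-x)\subset AB_{R_0+1}\subset B_{R_1}$, so it is enough to show $(\mathcal T_L-A(x))\sci B_{R_1}=(\mathcal T_L-A(y))\sci B_{R_1}$. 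I would then invoke Corollary~\ref{cor1_const_from_Delone} with $R'=R_1$, which reduces the task to the two local conditions $(\Omega-A(x))\cap B_{R_1+R_3}=(\Omega-A(y))\cap B_{R_1+R_3}$ and $(M-A(x))\sci B_{R_1+R_L}=(M-A(y))\sci B_{R_1+R_L}$.

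\textbf{Checking these at the inflated scale.} For the $\Omega$-condition, $A\Omega=\Omega$ turns it into an assertion about $\Omega-x$ versus $\Omega-y$ on $A^{-1}B_{R_1+R_3}$, and $A^{-1}B_{R_1+R_3}\subset B_{R_2}$ by the defining inequalities of $R_2$ and $R_3$; the agreement on $B_{R_2}$ is exactly the label datum $(\Omega-x)\cap B_{R_2}=(\Omega-y)\cap B_{R_2}$ contained in the hypothesis $T_{x,L}-x=T_{y,L}-y$. For the $M$-condition, local derivability of $(X_i)_i$ from $(AX_i)_i$ with radius $R_D$ lets one compare $m_i$ at a point $p$ with $\|p-A(x)\|\le R_1+R_L$ and at $p-A(x)+A(y)$ by comparing $Am_i$ on the $R_D$-balls about those two points; evaluating $Am_i$ there pulls back under $A^{-1}$ into $x+A^{-1}B_{R_D+R_1+R_L}$ and $y+A^{-1}B_{R_D+R_1+R_L}$, and the hypothesis $AB_{L-R_0-1}\supset B_{R_1+R_L+R_D}$ places these inside $B(x,L)$ and $B(y,L)$, where $M-x$ and $M-y$ coincide by the remaining label datum $(M-x)\sci B_L=(M-y)\sci B_L$. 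This radius bookkeeping is the technical heart of the argument, and it is precisely what the definitions of $R_1,R_2,R_3,R_L$ and the stated lower bound on $L$ are engineered for. With well-definedness in hand, $\rho$ carries each proto-tile to a finite patch of translates of proto-tiles, so $(\mathcal A,A,\rho)$ is a pre-overlapping substitution; together with (ii) and (iii), this is the assertion of the theorem.
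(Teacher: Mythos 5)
Your proof is correct and follows essentially the same route as the paper: the heart in both cases is Corollary \ref{cor1_const_from_Delone} applied with $R'=R_1$ at the inflated scale, after verifying the $\Omega$-condition via $A\Omega=\Omega$ and the $M$-condition via the inflation symmetry (local derivability with radius $R_D$) together with the hypothesis $AB_{L-R_0-1}\supset B_{R_1+R_L+R_D}$, and then noting $A(V_x-x)\subset AB_{R_0+1}\subset B_{R_1}$. The only deviations are cosmetic: you read $(M-x)\sci B_L=(M-y)\sci B_L$ directly off the label instead of passing through Lemma \ref{lem3_const_from_Delone}, and you write out the consistency and fixed-point verifications that the paper compresses into ``from which the claim follows.''
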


We omit the technical but straightforward proof.



\begin{ex}
    We take a matrix
    \begin{align*}
        A=\begin{pmatrix}
            0 &1 &0\\
            0 &0 &1 \\
            -3 &4 &1
        \end{pmatrix}
    \end{align*}
    and consider the linear map
    on $\mathbb{R}^3$ by the multiplication
    by $A$. There are three eigenvalues
    $\lambda_1\approx 2.20, \lambda_2\approx -1.91, \lambda_3\approx 0.714$. The unstable
    space $E_u$ is spanned by eigenvectors
    for $\lambda_1$ and $\lambda_2$, and
    the stable subspace $E_s$ is spanned
    by eigenvectors for $\lambda_3$.
    We choose the lattice $L=\mathbb{Z}^3$ .

   To draw a picture, we transform $E_u$ onto $\mathbb{R}^2$ by applying a map $\psi\colon\mathbb{R}^3\rightarrow\mathbb{R}^3$ defined by
    \begin{align*}
        \psi\colon\begin{pmatrix}
            1\\\lambda_1\\\lambda_1^2
        \end{pmatrix}
        \mapsto\begin{pmatrix}
            1\\0\\0
        \end{pmatrix},
        \begin{pmatrix}
            1\\\lambda_2\\\lambda_2^2
        \end{pmatrix}
        \mapsto\begin{pmatrix}
            0\\1\\0
        \end{pmatrix},
        \begin{pmatrix}
            1\\\lambda_3\\\lambda_3^2
        \end{pmatrix}
        \mapsto\begin{pmatrix}
            0\\0\\1
        \end{pmatrix}.
    \end{align*}

    Let $\phi$ be the linear map on $\mathbb{R}^3$ defined by $\phi(v)=\psi(A\psi^{-1}(v))$.

    Whenever we take a compact neighborhood $K$ for $0\in\mathbb{R}^2$, we can
    take a digit $D$ that satisfies the
    condition in Proposition \ref{prop1_construction_high-dim}.
    For example,
    we consider a ball
    \begin{align*}
        B=\left\{v\in\mathbb{R}^2\mid\|v\|\leqq \frac{1}{4}\right\},
    \end{align*}
    and set $K=\phi(B)$. We can consider the following digit:
    \begin{align*}
        D=\{\pi\psi(0,0,0), \pi\psi(0,1,0), \pi\psi(0,-1,0), \pi\psi(0,-1,2), \pi\psi(0,0,2), \\\pi\psi(0,1,2), 
\pi\psi(0,1,-2), \pi\psi(0,0,-2), \pi\psi(0,-1,-2)\},
    \end{align*}
    where $\pi\colon\mathbb{R}^3\rightarrow\mathbb{R}^2$
    is the projection onto the
    first two coordinates.

    The Delone set $\Lambda$ it generates
    is relatively dense with respect to
    a radius $1/4$ and unifmormly discrete
    by Proposition \ref{prop1_construction_high-dim}.
    The set $\Lambda$ and its Voronoi tiling is depicted in
    Figure \ref{fig:construction_high-dim}. In the figure, a tile, its image by $\phi$, and its image by the corresponding overlapping substitution are
    highlighted.

    \begin{figure}
        \centering
    \includegraphics[width=0.5\linewidth]{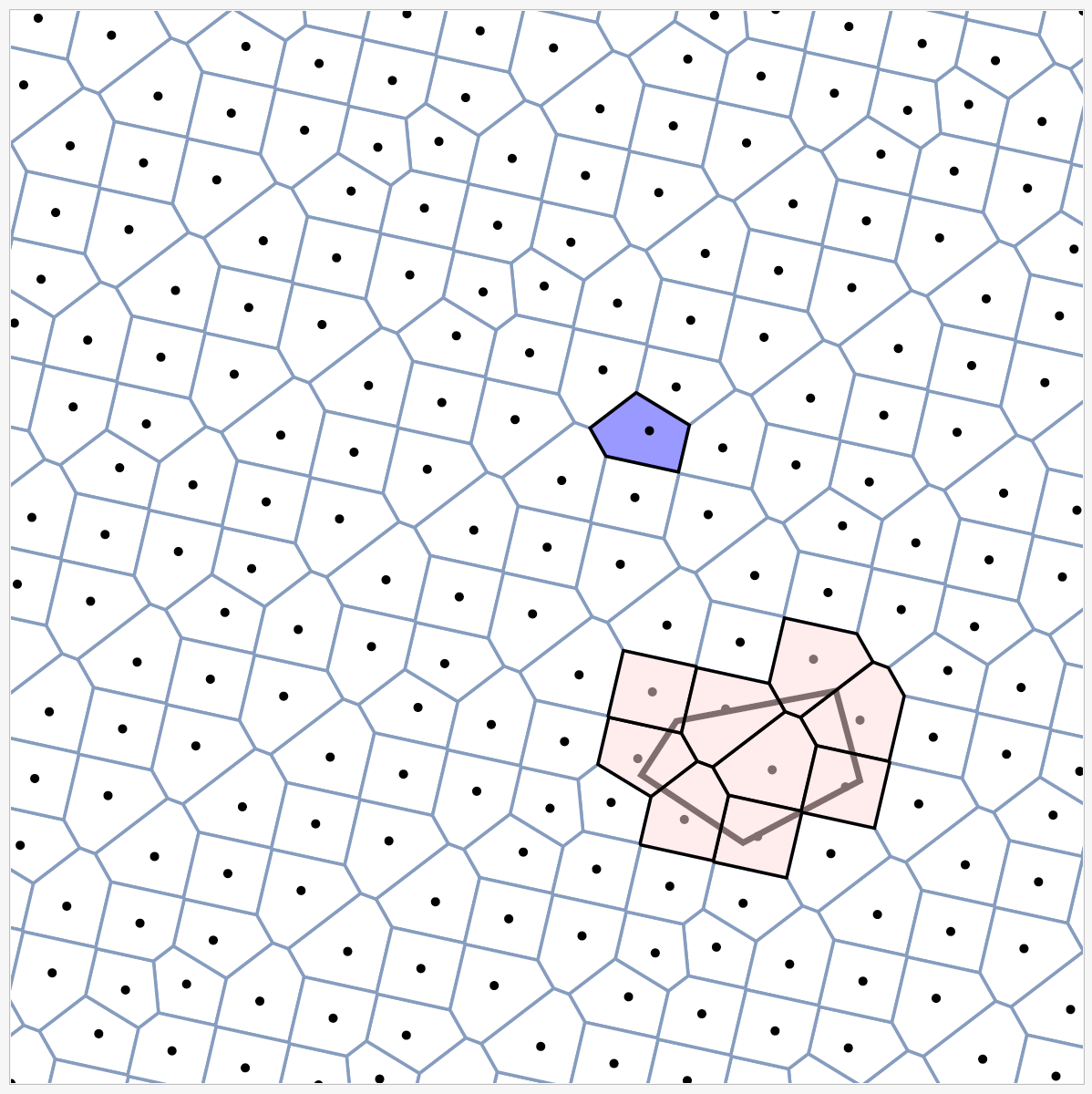}
        \caption{The dots are points in $\Lambda$ and each polygons are tiles in the tiling. For a blue tile, its expansion by the map $\phi$ is depicted in the thick edge and its image by the overlapping substitution is colored in red.}
        \label{fig:construction_high-dim}
    \end{figure}
\end{ex}

\section{Further problems}
\label{OpenProblem}

A self-similar tiling gives an associated IFS.
Our definition of overlapping substitution is 
 restricted to the cases where the overlap occurs only around the boundary of expanded pieces. 
 The problem of Bernoulli convolution is more difficult: it deals with heavy overlaps (c.f. \cite{PSS}), and it is challenging to extend our method.

P.~Gummelt \cite{Gummelt:96} gave a single decagon with special markings, which covers the plane but only in non-periodic ways. This 
covering encodes a version of Penrose tiling by Robinson's triangle.
This covering uses overlaps of pieces but it does not fall into our framework. It is interesting to have a possible generalization of overlap tiling, which includes such an example.

The construction in Theorem \ref{thm_const_from_Delone} is valid for the Delone (multi) sets with inflation symmetry for an expansive linear map such as $Ax=\lambda x$, where $\lambda$ is a Pisot number. However, a similar construction is possible without the Pisot-type assumption: for example, we can take the expansion by an arbitrary real number $\lambda>1$.
For the one-dimension case ($d=1$), we set $\Omega=[0,\infty)$, in order to avoid getting a 
non-discrete subset of $\mathbb{R}$.
 Fix an integer $m\geqq\lambda-1$.
We set
\begin{align*}
 X=\left.\left\{\sum_{j=0}^n\varepsilon_j\lambda^j\right| n\in\mathbb{Z}_{\geqq 0},\varepsilon_j\in\{0,1,\ldots ,m-1,m\} \right\}
\end{align*}
$X$ is a discrete subset of $\Omega$ that satisfies an equation
\begin{align*}
 X=\bigcup_{i=0}^m(\lambda X+i),
\end{align*}
see \cite{Feng-Wen:02}.
By the construction in Theorem \ref{thm_const_from_Delone}, we can get an overlapping substitution $\rho$.
In the case of non-Pisot $\lambda$, the set
 $X$ does not have finite local complexity, and the alphabet forms an infinite set.
However, $\rho$ has ``continuity'', in the sense that if $P,Q$ in the alphabet are ``close''
(the supports and labels are close with respect to Hausdorff metric), then
the resulting patches are ``close'' (each tile in one patch has a counterpart in the other, which is
``close''). Recently, substitutions with an infinite alphabet have been paid attention to.
For example, \cite{FGM} constructed substitutions with a compact infinite alphabet for an arbitrary
real number $\lambda>2$ as the inflation factor.
Our method is significantly different from \cite{FGM}, but also enables us to
construct substitutions with arbitrary $\lambda>1$ as the inflation factor.


The family of overlapping substitutions with one or two parameters in Section \ref{section_construction_one-dim}
gives examples of a deformation of substitutions.
The deformations of tilings are discussed in \cite{Clark-Sadun}
and some of the deformations of
a fixed point of a substitution
are fixed points of deformed substitutions. It is interesting
to investigate the nature of such special deformations.

\section{Appendix}\label{appendix}
Here, we prove Proposition \ref{prop_frequency_weighted-substi}.
\subsection{Notation}
First, we fix notation.  Let $v$ be a weighted pattern such that $\SP(v)$ is a tiling and
$v(T)=1$ for each $T\in\SP(v)$. We assume $v$ is a fixed point for an iteration of a weighted
substitution $\xi$, so that there is a $k>0$ such that
$\xi^k(v)=v$. Let $\mathcal{B}=\{T_1,T_2,\ldots ,T_{\kappa}\}$ be the set of tiles for $\xi$ and
$\phi$ be its expansive map. The substitution matrix for $\xi$ is denoted by $M$.
We take a left Perron--Frobenius eigenvector $\mathbf{l}=(l_1,l_2,\ldots ,l_{\kappa})$ and
right eigenvector $\mathbf{r}={}^t(r_1,r_2,\ldots ,r_{\kappa})$ which are normalized in the sense
that
\begin{align*}
 \sum_{i=1}^{\kappa}l_ir_i=\sum_{i=1}^{\kappa}r_i=1.
\end{align*}

\textcolor{blue}{By taking similitude of proto-tiles if necessary, }we may assume that
\begin{align*}
 l_i=\vol(T_i)
\end{align*}
holds for each $i$.

For a weighted pattern $u$ with finite support patch, we set
\begin{align*}
 \vol(u)=\sum_{T\in\SP(u)}u(T)\vol(T).
\end{align*}
In what follows, we consider averaging over van Hove sequences in $\mathbb{R}^d$, 
and it is important
to divide an element $A$ of a van Hove sequence into ``interior'' and ``boundary''.
For each $L>0$, we set
\begin{align*}
 A^{-L}&=\{t\in\mathbb{R}^d\mid B(t,L)\subset A\},\\
\partial^LA&=\{t\in\mathbb{R}^d\mid B(t,L)\cap A\neq\emptyset, B(t,L)\cap A^c\neq\emptyset\}.
\end{align*}

We also consider subsets of  a patch $\SP(v)$, as follows, depending on whether the inflation
of a tile in $\SP(v)$ is inside or not in side of $A$, and on the  ``boundary'' of $A$:
\begin{align*}
 &\mathcal{P}(m,\subset A)=\{T\in\SP(v)\mid \SR(\xi^{km}(T))\subset A\},\\
& \mathcal{P}(m,\not\subset A)=\{T\in\SP(v)\mid \SR(\xi^{km}(T))\not\subset A\},\\
& \mathcal{P}(m,\partial A)=\{T\in\SP(v)\mid \SR(\xi^{km}(T))\not\subset A, 
\SR(\xi^{km}(T))\cap A\neq \emptyset\}.
\end{align*}

Finally, for each natural number $m$, let $L_m$ be the maximal diameter for the
region $\SR(\xi^{km}(T_i))$, $i=1,2,\ldots ,\kappa$.

\subsection{Proof}
\begin{lem}
 We have
\begin{align*}
 \sum_{T\in\mathcal{P}(m,\subset A)}\tau_{T_i}(\xi^{km}(T))\leqq\tau_{T_i}(v\sci A).
\end{align*}
\end{lem}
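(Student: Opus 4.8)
The plan is to unwind the definitions on both sides and show that the family of weighted patterns $\{\xi^{km}(T)\mid T\in\mathcal{P}(m,\subset A)\}$ can be summed into $\xi^{km}(v\sci A)$ without any contribution from tiles whose inflation leaves $A$. First I would recall that $v$ is a fixed point for $\xi^k$, so $\xi^{km}(v)=v$ for every $m$, and by the corollary following the continuity lemma we have $\xi^{km}(v)=\sum_{T\in\mathcal{B}+\mathbb{R}^d}v(T)\xi^{km}(T)$. Restricting the sum to tiles in $\SP(v)$ with $v(T)=1$, this is $\sum_{T\in\SP(v)}\xi^{km}(T)$. The cut-off $v\sci A$ is the weighted pattern supported on exactly those $T\in\SP(v)$ with $\supp T\subset A$, again with weight $1$.

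The key step is the inclusion $\mathcal{P}(m,\subset A)\subset\SP(v\sci A)$, or more precisely that every $T\in\mathcal{P}(m,\subset A)$ satisfies $\supp T\subset A$: since $\rho$ (hence $\xi$) is expanding, $\SR(\xi^{km}(T))\supset\phi^{km}(\supp T)\supset\supp T$ (the expansion map moves points away from the origin, and one should arrange, via translation, that the relevant fixed-point tile contains the origin, or simply use that $\SR(\xi^{km}(T))\supset\supp T$ holds because $\xi^{km}(T)$'s support region contains $\supp T$ itself after the first application already puts a subdivision on top of $\phi(\supp T)$). Granting this, $\SR(\xi^{km}(T))\subset A$ forces $\supp T\subset A$, so $T\in\SP(v\sci A)$. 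Then I would compute, using non-negativity of all weights (each $\xi(T)$ is a positive map, so $\xi^{km}(T)$ has non-negative values, and $\tau_{T_i}$ is a sum of such values),
\begin{align*}
 \sum_{T\in\mathcal{P}(m,\subset A)}\tau_{T_i}(\xi^{km}(T))
 &\leqq\sum_{T\in\SP(v\sci A)}\tau_{T_i}(\xi^{km}(T))\\
 &=\tau_{T_i}\Bigl(\sum_{T\in\SP(v\sci A)}\xi^{km}(T)\Bigr)
 =\tau_{T_i}\bigl(\xi^{km}(v\sci A)\bigr).
\end{align*}
The last equality uses linearity of $\xi^{km}$ and of $\tau_{T_i}$ together with the convergence lemma guaranteeing the interchange is legitimate (only finitely many terms contribute to any fixed coordinate). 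Finally, since $v\sci A$ has support inside $A$ and $v$ is $\xi^k$-invariant with $\SP(v)$ a patch, the support pattern of $\xi^{km}(v\sci A)$ is a subpatch of $\SP(v)$ consisting of tiles whose inflation origin lay in $A$; in particular $\tau_{T_i}(\xi^{km}(v\sci A))\leqq\tau_{T_i}(v\sci A)$ because $\SP(\xi^{km}(v\sci A))\subseteq\SP(v)$ and on $\SP(v)$ all weights equal $1$, so any finite count is bounded by the count over all of $v\sci A$'s iterate, which in turn tiles a region contained in $A$ up to boundary effects already absorbed. Combining the displayed inequality with this last bound gives the claim.

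The main obstacle I anticipate is making rigorous the very last comparison $\tau_{T_i}(\xi^{km}(v\sci A))\leqq\tau_{T_i}(v\sci A)$: a priori $\xi^{km}$ inflates each tile, so the iterate of $v\sci A$ has far more tiles than $v\sci A$ itself, and the inequality can only go the stated direction because $\SP(\xi^{km}(v\sci A))$ is contained in $\SP(\xi^{km}(v))=\SP(v)$ and, crucially, each distinct $T\in\SP(v\sci A)$ produces, under $\xi^{km}$, a \emph{disjoint} (in the sense of the patch $\SP(v)$) collection of tiles — this is exactly where consistency of the weighted substitution and the fixed-point equation $\xi^{km}(v)=v$ are used, since they guarantee $\sum_{T\in\SP(v\sci A)}\xi^{km}(T)$ has all weights in $\{0,1\}$ rather than accumulating. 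One must also be slightly careful that $\xi^{km}(v\sci A)$ genuinely has support inside (a neighbourhood of) $A$ so that comparing $\tau_{T_i}$-counts against $v\sci A$ makes sense; this follows from the expanding property since $\SR(\xi^{km}(T))$ for $T$ with $\supp T\subset A$ need \emph{not} be inside $A$, but the restriction to $T\in\mathcal{P}(m,\subset A)$ on the left-hand side is precisely what keeps everything inside, so the cleaner route is to replace $\SP(v\sci A)$ in the middle of the display by $\mathcal{P}(m,\subset A)$ itself and argue $\sum_{T\in\mathcal{P}(m,\subset A)}\xi^{km}(T)$ is a $0/1$-weighted sub-pattern of $v$ supported in $A$, whence its $T_i$-count is at most that of $v\sci A$.
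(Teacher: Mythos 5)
Your main route contains two genuine errors. First, the inclusion $\mathcal{P}(m,\subset A)\subset\SP(v\sci A)$ is false: nothing forces $\supp T\subset A$ when $\SR(\xi^{km}(T))\subset A$. The claimed chain $\SR(\xi^{km}(T))\supset\phi^{km}(\supp T)\supset\supp T$ does not hold, since expansivity is a property of the linear map $\phi$ at the origin; for $T=T_j+x$ one has $\SP(\xi^{km}(T))=\SP(\xi^{km}(T_j))+\phi^{km}(x)$, which sits near $\phi^{km}(x)$ and in general far from $\supp T$, so whether the inflated patch lies in $A$ says nothing about whether $T$ does (for $A=A_n+x_n$ away from the origin, the tiles of $\mathcal{P}(m,\subset A)$ lie roughly in $\phi^{-km}(A)$, typically disjoint from $A$). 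Second, the closing bound $\tau_{T_i}(\xi^{km}(v\sci A))\leqq\tau_{T_i}(v\sci A)$ is false in the stated direction: $\tau_{T_i}$ adds up the weights of \emph{all} translates of $T_i$ wherever they sit, and $\tau_{T_i}(\xi^{km}(v\sci A))=\sum_j(M^{km})_{ij}\,\tau_{T_j}(v\sci A)$, which is of order $\beta^{km}$ times \emph{larger} than $\tau_{T_i}(v\sci A)$; the containment $\SP(\xi^{km}(v\sci A))\subset\SP(v)$ does not help, because $v\sci A$ retains only the tiles of $v$ inside $A$ while $\xi^{km}(v\sci A)$ spreads over roughly $\phi^{km}(A)$.

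The ``cleaner route'' you sketch in your final sentences is the correct argument, and it is exactly the paper's proof: for $T\in\mathcal{P}(m,\subset A)$ every tile of $\SP(\xi^{km}(T))$ has support contained in $\SR(\xi^{km}(T))\subset A$, hence $\xi^{km}(T)\sci A=\xi^{km}(T)$; moreover, by positivity of all terms and the fixed-point equation, $\sum_{T\in\mathcal{P}(m,\subset A)}\xi^{km}(T)\leqq\sum_{T\in\SP(v)}\xi^{km}(T)=\xi^{km}(v)=v$ pointwise. Applying the monotone cut-off $\cdot\sci A$ to this inequality (which leaves the left-hand side unchanged) and then $\tau_{T_i}$ yields the lemma. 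Note that only positivity and $\xi^{km}(v)=v$ are needed; no consistency or $0$--$1$ weight discussion, no claim that $\supp T\subset A$, and no control of $\xi^{km}(v\sci A)$ enter the argument. As written, however, your principal derivation does not establish the statement.
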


\begin{proof}
 For $T\in\mathcal{P}(m,\subset A)$, we have $\xi^{km}(T)=\xi^{km}(T)\sci A$, and so
\begin{align*}
 \tau_{T_i}\left(\sum_{T\in\mathcal{P}(m,\subset A)}\xi^{km}(T)\right)
= \tau_{T_i}\left(\sum_{T\in\mathcal{P}(m,\subset A)}\xi^{km}(T)\sci A\right)\leqq\tau_{T_i}(v\sci A),
\end{align*}
where the last inequality is due to $v\geqq\sum_{T\in\mathcal{P}(m,\subset A)}\xi^{km}(T)$.
\end{proof}

\begin{lem}\label{lem1_appendix}
 For a subset $A$ of $\mathbb{R}^d$, a natural number $m$ and $i=1,2,\ldots ,\kappa$, we have
\begin{align*}
 \sum_{T\in\mathcal{P}(m,\not\subset A)}\tau_{T_i}(\xi^{km}(T)\sci A)\leqq\tau_{T_i}(v\sci\partial^{L_m}A).
\end{align*}
\end{lem}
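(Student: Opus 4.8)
The plan is to decompose the sum over $\mathcal{P}(m,\not\subset A)$ into the part that contributes nothing after cutting off by $A$ and the part that genuinely meets $A$, and then to bound the latter by a sum supported on the boundary collar $\partial^{L_m}A$. First I would observe that if $T\in\mathcal{P}(m,\not\subset A)$ satisfies $\SR(\xi^{km}(T))\cap A=\emptyset$, then every tile $S$ in the support pattern of $\xi^{km}(T)$ has $\supp S\cap A=\emptyset$, so $\xi^{km}(T)\sci A=0$ and such a $T$ contributes nothing to the left-hand side. Hence the sum over $\mathcal{P}(m,\not\subset A)$ equals the sum over $\mathcal{P}(m,\partial A)$, the tiles whose $km$-th inflation region is neither contained in $A$ nor disjoint from $A$.

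Next I would show that for every $T\in\mathcal{P}(m,\partial A)$ the whole region $\SR(\xi^{km}(T))$ is contained in $\partial^{L_m}A$. Indeed, such a region meets both $A$ and $A^c$ (the latter because it is not contained in $A$), and its diameter is at most $L_m$ by the definition of $L_m$; so for any point $t$ in this region, the ball $B(t,L_m)$ contains the whole region and therefore meets both $A$ and $A^c$, which is exactly the condition $t\in\partial^{L_m}A$. Consequently $\xi^{km}(T)=\xi^{km}(T)\sci\partial^{L_m}A$ for each such $T$, and in particular $\xi^{km}(T)\sci A\leqq\xi^{km}(T)\sci\partial^{L_m}A$ as weighted patterns (both are bounded above by $\xi^{km}(T)$, which is supported in $\partial^{L_m}A$).

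Finally I would assemble the pieces exactly as in the previous lemma: since the tiles of $\SP(v)$ are pairwise non-overlapping and $v(T)=1$ on them, applying $\xi^{km}$ to distinct tiles of $\SP(v)=\SP(\xi^{km}(v))$ produces weighted patterns whose sum is dominated by $v$ (using $\xi^{km}(v)=v$ and positivity of $\xi$), so
\begin{align*}
  \sum_{T\in\mathcal{P}(m,\not\subset A)}\xi^{km}(T)\sci A
  =\sum_{T\in\mathcal{P}(m,\partial A)}\xi^{km}(T)\sci A
  \leqq\sum_{T\in\mathcal{P}(m,\partial A)}\xi^{km}(T)\sci\partial^{L_m}A
  \leqq v\sci\partial^{L_m}A.
\end{align*}
Applying the positive linear functional $\tau_{T_i}$ to both ends gives the claim. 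The one step requiring a little care is the containment $\SR(\xi^{km}(T))\subset\partial^{L_m}A$ for $T\in\mathcal{P}(m,\partial A)$: one must be sure that "not contained in $A$" together with "meets $A$" forces the region to straddle $\partial A$ in the metric sense, which is where the uniform diameter bound $L_m$ and the geometric definition of $\partial^{L_m}A$ are used; the rest is the same domination-by-$v$ bookkeeping already established.
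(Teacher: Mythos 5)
Your proposal is correct and follows essentially the same route as the paper: reduce the sum to $\mathcal{P}(m,\partial A)$, use the diameter bound $L_m$ to conclude $\xi^{km}(T)\sci A\leqq\xi^{km}(T)\sci\partial^{L_m}A$, and then dominate the sum by $v\sci\partial^{L_m}A$ via $\sum_{T\in\SP(v)}\xi^{km}(T)=\xi^{km}(v)=v$ and positivity of $\tau_{T_i}$. The only difference is that you spell out the containment $\SR(\xi^{km}(T))\subset\partial^{L_m}A$, which the paper leaves implicit.
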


\begin{proof}
 If $T\in\mathcal{P}(m,\not\subset A)$ and $\xi^{km}(T)\sci A\neq 0$, we have
$T\in\mathcal{P}(m,\partial A)$.  We have
\begin{align*}
 \sum_{T\in\mathcal{P}(m,\not\subset A)}\tau_{T_i}(\xi^{km}(T)\sci A)&
\leqq\sum_{T\in\mathcal{P}(m,\partial A)}\tau_{T_i}(\xi^{km}(T)\sci\partial^{L_m}A)\\
&=\tau_{T_i}\left(\sum\xi^{km}(T)\sci\partial^{L_m}A\right)\\
&\leqq\tau_{T_i}\left(\sum_{T\in\SP(v)}\xi^{km}(T)\sci\partial^{L_m}A\right)\\
&=\tau_{T_i}(\xi^{km}(v)\sci\partial^{L_m}A)\\
&=\tau_{T_i}(v\sci \partial^{L_m}A).
\end{align*}
\end{proof}

\begin{lem}\label{lem2_appendix}
 For a set $A\subset\mathbb{R}^d$ and a natural number $m$, we have
\begin{align*}
 0\leqq\vol(A)-\sum_{T\in\mathcal{P}(m,A)}\vol(\xi^{km}(T))\leqq\vol(\partial^{km}A),
\end{align*}
and
\begin{align*}
 \left|\frac{1}{\vol(A)}-\frac{1}{\sum_{T\in\mathcal{P}(m,A)}\vol(\xi^{km}(T))}\right|
\leqq \frac{\vol\partial^{L_m}A}{\vol(A)\vol(A^{-L_m})}.
\end{align*}
\end{lem}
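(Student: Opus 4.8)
The plan is to run the argument of the two preceding lemmas with the volume functional $\vol$ in place of $\tau_{T_i}$, and then to pay attention to the boundary layers that appear. The one structural input is that $v=\xi^{km}(v)=\sum_{T\in\SP(v)}\xi^{km}(T)$ is a sum of \emph{nonnegative} weighted patterns. Cutting off by $A$ (cutting-off being linear and continuous) and noting that $\xi^{km}(T)\sci A=\xi^{km}(T)$ whenever $T\in\mathcal{P}(m,\subset A)$ — since then every tile of $\SP(\xi^{km}(T))$ has support inside $\SR(\xi^{km}(T))\subset A$ — I would write
\begin{align*}
 v\sci A=\sum_{T\in\mathcal{P}(m,\subset A)}\xi^{km}(T)+\sum_{T\in\mathcal{P}(m,\not\subset A)}\xi^{km}(T)\sci A .
\end{align*}
Because an element $A$ of a van Hove sequence is bounded and $\SP(v)$ is a tiling by finitely many prototiles, $\mathcal{P}(m,\subset A)$ is finite, and the only $T\in\mathcal{P}(m,\not\subset A)$ contributing to the last sum lie in $\mathcal{P}(m,\partial A)$, again finitely many; each piece thus has finite support, $\vol$ is additive, and
\begin{align*}
 \vol(v\sci A)=\sum_{T\in\mathcal{P}(m,\subset A)}\vol(\xi^{km}(T))+\vol\Bigl(\sum_{T\in\mathcal{P}(m,\not\subset A)}\xi^{km}(T)\sci A\Bigr).
\end{align*}

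Both summands on the right are nonnegative. Since the tiles of $v$ with support inside $A$ are pairwise non‑overlapping and contained in $A$, we have $\vol(v\sci A)\leqq\vol(A)$, so dropping the second summand gives $\sum_{T\in\mathcal{P}(m,\subset A)}\vol(\xi^{km}(T))\leqq\vol(A)$, which is the inequality $\vol(A)-\sum_{T\in\mathcal{P}(m,\subset A)}\vol(\xi^{km}(T))\geqq 0$. For the upper bound I would estimate the second summand exactly as in Lemma~\ref{lem1_appendix}: if $\xi^{km}(T)\sci A\neq 0$ with $T\in\mathcal{P}(m,\not\subset A)$ then $T\in\mathcal{P}(m,\partial A)$, so $\SR(\xi^{km}(T))$ meets both $A$ and $A^c$ and has diameter at most $L_m$, whence $\SR(\xi^{km}(T))\subset\partial^{L_m}A$; summing, $\sum_{T\in\mathcal{P}(m,\not\subset A)}\xi^{km}(T)\sci A\leqq v\sci\partial^{L_m}A$, of volume $\leqq\vol(\partial^{L_m}A)$. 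Combining this with the elementary estimate $\vol(A)-\vol(v\sci A)\leqq\vol(\partial^{d_0}A)$, where $d_0=\max_i\operatorname{diam}\supp T_i$ (the part of $A$ not covered by tiles wholly inside $A$ is covered by tiles of $v$ straddling $\partial A$), yields
\begin{align*}
 \vol(A)-\sum_{T\in\mathcal{P}(m,\subset A)}\vol(\xi^{km}(T))\leqq\vol(\partial^{d_0}A)+\vol(\partial^{L_m}A)\leqq 2\,\vol(\partial^{\max(d_0,L_m)}A),
\end{align*}
using $\partial^{a}A\subset\partial^{b}A$ for $a\leqq b$. This is the asserted bound, the exponent being a boundary width comparable to $L_m$ — all that matters, since this lemma is applied only after dividing by $\vol(A_n)$ and invoking the van Hove property.

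The second displayed inequality is then pure arithmetic: writing $a=\vol(A)$ and $b=\sum_{T\in\mathcal{P}(m,\subset A)}\vol(\xi^{km}(T))$ one has $|a^{-1}-b^{-1}|=|a-b|/(ab)$, where $|a-b|\leqq\vol(\partial^{L_m}A)$ by the first part, $a=\vol(A)$, and $b\geqq\vol(A)-2\,\vol(\partial^{\max(d_0,L_m)}A)\geqq\vol(A^{-L_m})$ after absorbing a fixed‑width collar; this gives the claimed bound $|a^{-1}-b^{-1}|\leqq\vol(\partial^{L_m}A)/\bigl(\vol(A)\vol(A^{-L_m})\bigr)$ up to the same harmless collar. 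The genuinely delicate point in the whole argument is the bookkeeping of nested boundary layers — comparing $\partial^{L}$ of $A$ with $\partial^{L}$ of $A^{-L_m}$, and comparing a region with the union of those tiles of $v$ it actually contains — so I expect the careful verification that ``region minus union of contained tiles'' is controlled by a collar of width $\leqq d_0$, and that all the collars occurring sit inside $\partial^{CL_m}A$ for an absolute constant $C$, to be the part that has to be written out; everything else is just the nonnegativity of the decomposition $v=\sum_T\xi^{km}(T)$ together with the uniform diameter bound $L_m$.
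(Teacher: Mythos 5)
Your nonnegativity bound is fine and is in the same spirit as the paper's, but the other two inequalities are not actually established, and the problem is concentrated in one step. The lemma hinges on the sharp lower bound $\sum_{T\in\mathcal{P}(m,\subset A)}\vol(\xi^{km}(T))\geqq\vol(A^{-L_m})$, which you never prove. The paper obtains it by a weight-counting observation you are missing: if a tile $S\in\SP(v)$ meets $A^{-L_m}$, then any $T$ with $\xi^{km}(T)(S)\neq 0$ has $\supp S\subset\SR(\xi^{km}(T))$, a set of diameter at most $L_m$, hence $\SR(\xi^{km}(T))\subset A$ and $T\in\mathcal{P}(m,\subset A)$; so such an $S$ receives its full weight $\sum_{T}\xi^{km}(T)(S)=v(S)=1$ from $\mathcal{P}(m,\subset A)$ alone, and summing $\vol(S)$ over the tiles meeting $A^{-L_m}$ gives the bound. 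This single estimate yields both the upper bound $\vol(A)-\sum_{T\in\mathcal{P}(m,\subset A)}\vol(\xi^{km}(T))\leqq\vol(A)-\vol(A^{-L_m})\leqq\vol(\partial^{L_m}A)$ (the exponent $km$ in the statement is a typo for $L_m$) and, combined with $\vol(A)\geqq\sum_{T\in\mathcal{P}(m,\subset A)}\vol(\xi^{km}(T))$, the reciprocal bound with denominator exactly $\vol(A)\vol(A^{-L_m})$.

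Your substitute for this step is incorrect as written: in the chain $b\geqq\vol(A)-2\vol(\partial^{\max(d_0,L_m)}A)\geqq\vol(A^{-L_m})$ the last inequality goes the wrong way, since $A\setminus A^{-L_m}\subset\partial^{L_m}A$ gives $\vol(A^{-L_m})\geqq\vol(A)-\vol(\partial^{L_m}A)\geqq\vol(A)-2\vol(\partial^{\max(d_0,L_m)}A)$, and no ``absorbing a collar'' reverses it. What your decomposition honestly yields is $b\geqq\vol(A)-\vol(\partial^{d_0}A)-\vol(\partial^{L_m}A)$, hence a first bound with an extra factor $2$ and collar width $\max(d_0,L_m)$, and a reciprocal bound with denominator $\vol(A)\bigl(\vol(A)-2\vol(\partial^{\max(d_0,L_m)}A)\bigr)$ (which moreover requires this quantity to be positive). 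That weakened version would indeed suffice for the van Hove limit in the appendix, but it is not the lemma as stated. To close the gap, replace your collar bookkeeping for the lower bound on $b$ by the observation that $\sum_{T\in\mathcal{P}(m,\subset A)}\xi^{km}(T)(S)=1$ for every $S\in\SP(v)$ whose support meets $A^{-L_m}$; the rest of your argument (the decomposition of $v\sci A$ and the Lemma \ref{lem1_appendix}-style estimate of the boundary term) can stay.
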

\begin{proof}
 Since $\sum_{S\in\SP(v)}\xi^{km}(S)(T)=v(T)=1$ for each $T\in\SP(v)$, we have
\begin{align*}
 \vol(A)&=\sum_{T\in\SP(v)}\vol(T\cap A)\\
&=\sum_{S,T\in\SP(v)}\xi^{km}(S)(T)\vol(T\cap A)\\
&\geqq\sum_{S\in\mathcal{P}(m,A)}\sum_{T\in\SP(v)}\xi^{km}(S)(T)\vol(T\cap A)\\
&\geqq\sum_{S\in\mathcal{P}(m,A)}\sum_{T\in\SP(v)}\xi^{km}(S)(T)\vol(T)\\
&=\sum_{S\in\mathcal{P}(m,A)}\vol(\xi^{km}(S)),
\end{align*}
where the second inequality follows from the fact that if 
$S\in\mathcal{P}(m,A)$ and $\xi^{km}(S)(T)\neq 0$, then we have $T\subset A$.

We also have
\begin{align*}
 \sum_{S\in\mathcal{P}(m,A)}\sum_{T\in\SP(v)}\xi^{km}(S)(T)\vol(T)
&\geqq\sum_{\substack{T\in\SP(v),\\\supp T\cap A^{-L_m}\neq \emptyset}}
\sum_{S\in\mathcal{P}(m,A)}\xi^{km}(S)(T)\vol(T)\\
&\geqq\sum_{\substack{T\in\SP(v),\\\supp T\cap A^{-L_m}\neq \emptyset}}
\sum_{S\in\SP(v)}\xi^{km}(S)(T)\vol(T)\\
&\geqq\sum_{\substack{T\in\SP(v),\\\supp T\cap A^{-L_m}\neq \emptyset}}\vol(T)\\
&\geqq\vol(A^{-L_m}),
\end{align*}
by which the first two inequalities are proved.
The third inequality is an easy consequence of the first two and an inequality
\begin{align*}
 \sum_{T\in\mathcal{P}(m,A)}\vol(\xi^{km}(T))\geqq\vol(A^{-L_m}),
\end{align*}
which was proved above.
\end{proof}

\begin{prop}
 Let $(A_n)$ be a van Hove sequence in $\mathbb{R}^d$ and $i=1,2,\ldots ,\kappa$.
Then we have a convergence
\begin{align*}
 \lim_{n\rightarrow\infty}\frac{1}{\vol(A_n)}\tau_{T_i}(v\sci(A_n+x_n))=r_i,
\end{align*}
for arbitrary $x_n\in\mathbb{R}^d$, and the convergence is uniform for the choice of
$(x_n)_{n=1,2,\ldots}$.
\end{prop}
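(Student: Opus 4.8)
The plan is to reduce the statement to a uniform estimate for a single bounded set of the form $A=A_n+x_n$, controlling $\tau_{T_i}(v\sci A)/\vol(A)-r_i$ by two errors: one from Perron--Frobenius convergence and one from the boundary layer of $A$. First I would use $\xi^{km}(v)=v$, so that the corollary asserting $\xi^{km}(v)=\sum_{T}v(T)\xi^{km}(T)$ gives $v=\sum_{S\in\SP(v)}\xi^{km}(S)$ as weighted patterns; cutting off by $A$ and splitting $\SP(v)=\mathcal{P}(m,\subset A)\sqcup\mathcal{P}(m,\not\subset A)$ yields
\begin{align*}
 \tau_{T_i}(v\sci A)=\sum_{S\in\mathcal{P}(m,\subset A)}\tau_{T_i}(\xi^{km}(S))+\sum_{S\in\mathcal{P}(m,\not\subset A)}\tau_{T_i}(\xi^{km}(S)\sci A),
\end{align*}
using $\xi^{km}(S)\sci A=\xi^{km}(S)$ for $S\in\mathcal{P}(m,\subset A)$. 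By Lemma \ref{lem1_appendix} the second sum is at most $\tau_{T_i}(v\sci\partial^{L_m}A)$, and since the tiles it counts are interior-disjoint with support inside $\partial^{L_m}A$, it is at most $\vol(\partial^{L_m}A)/\vol(T_i)$.

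Next I would evaluate the first sum. Grouping $\mathcal{P}(m,\subset A)$ by proto-tile type and writing $N_j$ for the number of translates of $T_j$ occurring, translation invariance of $S\mapsto\tau_{T_i}(\xi^{km}(S))$ together with the routine identity $\tau_{T_i}(\xi^{km}(T_j))=(M^{km})_{ij}$ give $\sum_{S\in\mathcal{P}(m,\subset A)}\tau_{T_i}(\xi^{km}(S))=\sum_{j}(M^{km})_{ij}N_j$. At the same time $\vol(\xi^{km}(T_j))=\sum_i l_i(M^{km})_{ij}=\beta^{km}l_j$, since $l$ is a left $\beta$-eigenvector and $l_i=\vol(T_i)$; hence Lemma \ref{lem2_appendix} controls the \emph{single} combination $\beta^{km}\sum_j l_jN_j$:
\begin{align*}
 0\leqq\vol(A)-\beta^{km}\sum_{j}l_jN_j\leqq\vol(\partial^{L_m}A).
\end{align*}
The key point is that the unknown counts $N_j$ enter the main term weighted by $(M^{km})_{ij}$ and enter the volume estimate weighted by $l_j$, and Perron--Frobenius makes these two weightings asymptotically proportional.

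Then I would bring in primitivity: $\beta^{-km}M^{km}$ converges as $m\to\infty$ to the rank-one Perron--Frobenius spectral projector, which under the normalization of Proposition \ref{prop_frequency_weighted-substi} has $(i,j)$-entry $r_il_j$; thus $(M^{km})_{ij}=\beta^{km}(r_il_j+\varepsilon_{ij}(m))$ with $\delta(m):=\max_{i,j}|\varepsilon_{ij}(m)|\to 0$. Substituting and using $\beta^{km}\sum_j N_j\leqq(\min_j l_j)^{-1}\beta^{km}\sum_j l_jN_j\leqq(\min_j l_j)^{-1}\vol(A)$, one gets
\begin{align*}
 \sum_{j}(M^{km})_{ij}N_j=r_i\,\vol(A)+O\!\bigl(\vol(\partial^{L_m}A)\bigr)+O\!\bigl(\delta(m)\,\vol(A)\bigr),
\end{align*}
and therefore
\begin{align*}
 \left|\frac{\tau_{T_i}(v\sci A)}{\vol(A)}-r_i\right|\leqq \frac{\delta(m)}{\min_j l_j}+\Bigl(r_i+\frac{1}{\vol(T_i)}\Bigr)\frac{\vol(\partial^{L_m}A)}{\vol(A)},
\end{align*}
with constants depending only on $i$ and the (fixed) eigenvectors.

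Finally I would take the two limits in the correct order. Given $\varepsilon>0$, first pick $m$ with $\delta(m)/\min_j l_j<\varepsilon/2$; this freezes $L_m$. For $A=A_n+x_n$ one has $\partial^{L_m}(A_n+x_n)=\partial^{L_m}(A_n)+x_n$, so by translation invariance of $\vol$ the remaining term equals $(r_i+1/\vol(T_i))\,\vol(\partial^{L_m}A_n)/\vol(A_n)$, independent of $x_n$, and this tends to $0$ by the van Hove property; hence it is $<\varepsilon/2$ for all large $n$, uniformly in $(x_n)$. The hard part is precisely this interplay: the Perron--Frobenius error forces $m\to\infty$, but the diameter $L_m$ of $\SR(\xi^{km}(T_i))$ grows with $m$, so a naive simultaneous limit would let the boundary layer absorb everything; the remedy is to fix $m$ first, turning $L_m$ into a constant, and only then invoke the van Hove condition. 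The remaining work is routine bookkeeping: that all sums in sight are finite ($A$ bounded, $\SP(v)$ a locally finite tiling), and the two identities $\tau_{T_i}(\xi^{km}(T_j))=(M^{km})_{ij}$ and $\vol(\xi^{km}(T_j))=\beta^{km}l_j$, proved by induction on $m$.
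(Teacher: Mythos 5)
Your proof is correct and follows essentially the same route as the paper's: the same decomposition of $v=\sum_{S\in\SP(v)}\xi^{km}(S)$ into tiles whose level-$m$ supertiles lie inside $A_n+x_n$ versus those meeting the boundary layer $\partial^{L_m}(A_n+x_n)$, the same use of Lemmas \ref{lem1_appendix} and \ref{lem2_appendix}, the same Perron--Frobenius convergence of $\beta^{-km}M^{km}$, and the same order of limits (fix $m$ first so $L_m$ is a constant, then let $n\to\infty$ via the van Hove property, uniformly in $(x_n)$). The only difference is bookkeeping: you group by tile type with counts $N_j$ and substitute the rank-one spectral projector $r_il_j$, where the paper splits the same error into the three terms (I)--(III); the estimates are equivalent.
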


\begin{proof}
 We first observe that
\begin{align*}
 v\sci (A_n+x_n)&=\sum_{T\in\SP(v)}\xi^{km}(T)\sci(A_n+x_n)\\
&=\sum_{T\in\mathcal{P}(m,\subset A_n+x_n)}\xi^{km}(T)\sci(A_n+x_n)
+\sum_{T\in\mathcal{P}(m,\not\subset A_n+x_n)}\xi^{km}(T)\sci(A_n+x_n).
\end{align*}

(I) We have a convergence
\begin{align*}
& \frac{1}{\vol(A_n)}\tau_{T_i}\left(\sum_{T\in\mathcal{P}(m,\not\subset(A_n+x_n))}\xi^{km}(T)\sci (A_n+x_n)\right)\\
&\leqq\frac{1}{\vol(A_n)}\tau_{T_i}(v\sci\partial^{L_m}(A_n+x_n))\\
&\leqq \frac{1}{\vol(A_n)}\frac{\vol(\partial^{L_m}(A_n+x_n))}{\vol(T_i)}\\
&\rightarrow 0,
\end{align*}
as $n\rightarrow\infty$,
where the first inequality is due to Lemmma \ref{lem1_appendix}. Here, the convergence is uniform
for $(x_n)_n$.

(II) We also have a convergence
\begin{align*}
& \left|\frac{1}{\vol(A_n)}-\frac{1}{\sum_{S\in\mathcal{P}(m,\subset A_n+x_n)}\vol(\xi^{km}(S))}\right|\sum_{T\in\mathcal{P}(m,\subset A_n+x_n)}\tau_{T_i}(\xi^{km}(T))\\
&\leqq \frac{\vol(\partial^{L_m}(A_n+x_n))}{\vol(A_n)\vol(A_n+x_n)^{-L_m}}\frac{\vol(A_n+x_n)}{\vol T_i}\\
&\leqq\frac{\vol\partial^{L_m}A_n}{\vol(T_i)\vol(A_n^{-L_m})}\\
&\rightarrow 0,
\end{align*}
as $n\rightarrow \infty$, where the first inequality is due to Lemma \ref{lem2_appendix}.
Here, the convergence is uniform for $(x_n)_n$.

(III) 
We have
\begin{align}
 &\frac{\sum_{T\in\mathcal{P}(m,\subset A_n+x_n)}\vol(\tau_{T_i}(\xi^{km}(T)))}{\sum_{S\in\mathcal{P}(m,\subset A_n+x_n)}\vol(\xi^{km}(S))}-r_i\nonumber\\
=&\sum_{T\in\mathcal{P}(m,\subset A_n+x_n)}\frac{\vol(\xi^{km}(T))}{\sum_{S\in\mathcal{P}(m,\subset(A_n+x_n))}\vol(\xi^{km}(S))}\left(\frac{\tau_{T_i}(\xi^{km}(T))}{\vol(\xi^{km}(T))}-r_i\right).
\label{eq1_appendix}
\end{align}
If $T$ is a translate of $T_j$, we have
\begin{align*}
   \left|\frac{\tau_{T_i}(\xi^{km}(T))}{\vol(\xi^{km}(T))}-r_i\right|
&=\left|\frac{(M^{km})_{ij}}{\sum_{l=1}^{\kappa}\vol(T_l)(M^{km})_{lj}}-r_i\right|\\
&=\left|\frac{(M^{km})_{ij}}{\lambda^{km}\vol(T_j)}-r_i\right|
\end{align*}
and since $\vol(T_j)=l_j$, for any $\varepsilon>0$
this is smaller than $\varepsilon$ if $m$
is large enough.
The absolute value of \eqref{eq1_appendix} is smaller than arbitrary $\varepsilon>0$ if
$m$ is large enough.

By (I),(II) and (III), for any $\varepsilon$, if $m$ is large enough and $n$ is also large enough
(depending on the value of $m$), we have
\begin{align*}
& \left|\frac{1}{\vol(A_n)}\tau_{T_i}(v\sci (A_n+x_n))-r_i\right|\\
&\leqq\left |\frac{1}{\vol(A_n)}\tau_{T_i}\left(\sum_{T\in\mathcal{P}(m,\not\subset A_n+x_n)}\xi^{km}(T)\sci(A_n+x_n)\right)\right|\\
&+\left|\frac{1}{\vol(A_n)}-\frac{1}{\sum_{S\in\mathcal{P}(m,A_n+x_n)}\vol(\xi^{km}(S))}\right|
\tau_{T_i}\left(\sum_{T\in\mathcal{P}(m,\subset A_n+x_n)}\xi^{km}(T)\right)\\
&+\left|\frac{\sum_{T\in\mathcal{P}(m,\subset A_n+x_n)}\tau_{T_i}(\xi^{km}(T))}{\sum_{S\in\mathcal{P}(m,A_n+x_n)}\vol(\xi^{km}(S))}-r_i\right|\\
&<3\varepsilon
\end{align*}
\end{proof}

\section*{Acknowledgments}
SA and YN are supported by JSPS grants (17K05159, 20K03528, 21H00989, 23K12985). S-Q Zhang is supported by National Natural Science
Foundation of China (Grant No. 12101566).

\end{document}